\documentclass[11pt,reqno]{amsart}
\usepackage{amsmath}
\newcommand{\p}[2]{\gamma^{#2}_{#1}}
\newcommand{\C}[1]{\mathcal{#1}}
\newcommand{\m}[2]{#1\stackrel{\bullet}{~}#2}

\newtheorem{theorem}{Theorem}[section]
\newtheorem{lemma}[theorem]{Lemma}
\newtheorem{proposition}[theorem]{Proposition}
\newtheorem{corollary}[theorem]{Corollary}

\theoremstyle{definition}

\newtheorem{definition}[theorem]{Definition}

\newtheorem{remark}[theorem]{Remark}
\newtheorem{notation}[theorem]{Notation}
\newtheorem{example}[theorem]{Example}

\begin{document}

\markboth{ }{ }

\title{A Categorical Approach to Groupoid Frobenius Algebras}

\author{David N. Pham}

\address{Department of Mathematics\\
Marymount Manhattan College\\
221 E 71st Street, NY, NY 10021
\\
email: dpham90@gmail.com, dpham@mmm.edu}

%\thanks{}

\maketitle

\begin{abstract}
In this paper, we show that $\C{G}$-Frobenius algebras (for $\C{G}$ a finite groupoid) correspond to a particular class of Frobenius objects in the representation category of $D(k[\C{G}])$, where $D(k[\C{G}])$ is the Drinfeld double of the quantum groupoid $k[\C{G}]$ \cite{NTV}.   
\end{abstract}

\section{Introduction}	
Groupoid Frobenius algebras were introduced recently in \cite{P} as a groupoid version of (non-projective) $G$-Frobenius algebras ($G$-FAs) for $G$ a finite group \cite{Tu} \cite{K2}.  As shown\footnote{For an alternate approach to $G$-FAs, see \cite{K2}.} in \cite{Tu}, $G$-FAs are the algebraic structures which classify certain homotopy quantum field theories (HQFTs).  Roughly speaking, a $(d+1)$-dimensional HQFT is a topological quantum field theory \cite{At} for $d$-dimensional manifolds and $(d+1)$-dimensional cobordisms endowed with homotopy classes of maps into a given space $X$.  In the case when $X$ is an Eilenberg-MacLane space of type $K(G,1)$, one finds that the associated $(1+1)$-dimensional HQFTs are classified by $G$-FAs \cite{Tu}.

The author's original motivation for generalizing $G$-FAs to $\C{G}$-FAs for $\C{G}$ a finite groupoid was the appearance of certain ``atypical" $G$-FAs in \cite{KP}, which were constructed within the framework of stringy orbifold theory (cf \cite{JKK1} \cite{JKK2} \cite{FG}).  To get a basic idea of this construction, let $M$ be a compact, almost complex manifold with an action by a finite group $G$ which preserves the almost complex structure.  Let $I(M)$ denote the \textit{inertia manifold} of $M$, that is,
\begin{equation}
I(M):=\bigsqcup_{g\in G} M^g,
\end{equation}
where $M^g:=\{m\in M~|~g\cdot m=m\}$.  Let 
\begin{equation}
\mathcal{H}(M,G):=\bigoplus_{g\in G} H^{ev} (M^g),
\end{equation}
where $H^{ev} (M^g)$ denotes the even part of the ordinary cohomology of $M^g$ with rational coefficients.  Then $\mathcal{H}(M,G)$ can be endowed with a $G$-graded product, a $G$-action, and a $G$-invariant bilinear form which turns $\mathcal{H}(M,G)$ into a $G$-FA; $\mathcal{H}(M,G)$ together with the aforementioned $G$-graded product is called the \textit{stringy cohomology ring} of the $G$-manifold $M$.  To see how $\C{G}$-FAs arise from all this, we look to the inertial manfiold $I(M)$ which has a natural $G$-action given by
\begin{equation}
h\cdot (g,m):=(hgh^{-1},hm).
\end{equation}  
If one takes the stringy cohomology of $I(M)$ with its natural $G$-action, one obtains a $G$-FA with some additional structure; this additional structure is precisely that of a groupoid Frobenius algebra.  More specifically, $\mathcal{H}(I(M),G)$ turns out to be a $\wedge\overline{G}$-FA, where $\wedge\overline{G}$ is the \textit{loop groupoid} of the one object groupoid associated with $G$.

The existence of these atypical $G$-FAs motivated the view that $G$-FAs are actually a special case of some larger algebraic structure.  Ultimately, it was the transition from group to groupoid that resulted in a framework that was capable of accommodating these atypical $G$-FAs.   As it turned out, these early motivating examples were just the tip of the iceberg.   It was shown in \cite{P} that by working within the $\C{G}$-FA framework, one could construct a tower of increasingly complex $G$-FAs, where each $G$-FA in the tower is derived from some groupoid Frobenius algebra.   In addition to this, $\C{G}$-FAs could also be used to gain new insight on the problem of twisting ordinary $G$-FAs. 

It was shown in \cite{K1} that every $G$-FA has a twist by any element of $Z^2(G,k^\times)$.  Since these twists apply to all $G$-FAs, one can regard them as  ``universal" $G$-FA twists.  In an analogous manner, $\C{G}$-FAs have their own universal twists where the twisting is now by the elements of $Z^2(\C{G},k^\times)$ \cite{P}.  When one combines this point with the aforementioned tower of ``$\C{G}$-FA induced" $G$-FAs, one  obtains a significant generalization of the $G$-FA twisting result from \cite{K1}.  Specifically, for every $n\ge 2$, one can always find a class of $G$-FAs with twists by any element in $Z^n(G,k^\times)$ \cite{P}.  

While \cite{P} illustrates the utility of $\C{G}$-FAs in addressing and solving these problems, little was done in \cite{P} to motivate the choice of axioms for a $\C{G}$-FA.   The only motivation for the axioms came in the form of a short remark\footnote{More specifically, Remark 3.1 of \cite{P}.} which asserted that $\C{G}$-FAs might actually correspond to certain kinds of Frobenius objects in $\mbox{Rep}(D(k[\C{G}]))$ (the representation category of $D(k[\C{G}])$), where $D(k[\C{G}])$ is the Drinfeld double of the quantum groupoid (weak Hopf algebra) $k[\C{G}]$ \cite{NTV}.  This assertion is motivated by a recent categorical result for $G$-FAs \cite{KP} which showed that $G$-FAs correspond to certain kinds of Frobenius objects in $\mbox{Rep}(D(k[G]))$, where $D(k[G])$ is the original Drinfeld double of the Hopf algebra $k[G]$ \cite{Drin}.  Consequently, if the assertion proves true,  the $\C{G}$-FA axioms of \cite{P} would essentially be a consequence of generalizing $D(k[G])$ to $D(k[\C{G}])$.  In other words, from this categorical vantage point, the notion of a $\C{G}$-FA is a natural generalization of a $G$-FA for the case when $G$ is replaced by $\C{G}$.   With the current paper, we show that the assertion of \cite{P} is indeed true.    

The rest of the paper is organized as follows.  In section 2, we give a brief review of quantum groupoids \cite{BS} \cite{BS1} \cite{N} and their representation category \cite{NTV}.  In section 3, we  prove the assertion raised in \cite{P}.  We conclude the paper in section 4 with some open questions.
\section{Preliminaries}
Throughout this paper, we use the following notation.
\begin{itemize}
\item[] $k$ is a field of characteristic $0$.
\vspace*{0.05in}
\item[] $\C{G}=(\C{G}_0,\C{G}_1)$ denotes a finite groupoid whose set of objects is $\C{G}_0$ and whose set of morphisms is $\C{G}_1$. 
\vspace*{0.05in}
\item[ ] The source and target maps from $\C{G}_1$ to $\C{G}_0$ are denoted as $s$ and $t$ respectively.
\vspace*{0.05in}
\item[ ] For $\mathrm{x}\in \C{G}_0$, $e_\mathrm{x}$ denotes the identity morphism associated to $\mathrm{x}$.
\vspace*{0.05in} 
\item[ ]  For $\mathrm{x}\in \C{G}_0$, $\Gamma^\mathrm{x}$ is the group consisting of all $g\in \C{G}_1$ with $s(g)=t(g)=\mathrm{x}$.
\end{itemize}
\subsection{Quantum Groupoids}
Quantum groupoids or weak Hopf algebras \cite{BS} \cite{BS1} \cite{N} generalize the notion of ordinary Hopf algebras by weakening the axioms concerning the coproduct and counit in the following way:
\begin{itemize}
\item[1.] the coproduct is not necessarily unit-preserving;
\item[2.] the counit is not necessarily multiplicative.
\end{itemize}
Formally, a quantum groupoid is defined as follows:
\begin{definition}
\label{QuantumGroupoidDef}
A quantum groupoid over a field $k$ is a tuple $(H,\cdot,1,\Delta,\varepsilon,S)$ where  
\begin{itemize}
\item[(i)] $H$ is a finite dimensional unital associative algebra over $k$ with product $\cdot$ 
 and unit $1$.
\item[(ii)] $H$ is a finite dimensional counital coassociative algebra over $k$ with coproduct $\Delta: H\rightarrow H\otimes_k H$ and counit $\varepsilon: H\rightarrow k$.
\item[(iii)] The algebra and coalgebra structure of $H$ satisfy the following compatibility conditions.
\begin{itemize}
\item[(a)] Multiplicativity of the coproduct:  for all $x,y\in H$, 
\begin{align}
\nonumber
\Delta(x\cdot y)=\Delta(x)\cdot \Delta(y)
\end{align}
\item[(b)] Weak multiplicativity of the counit: for all $x,y,z\in H$,
\begin{align}
\nonumber
\varepsilon(x\cdot y\cdot z)&=\varepsilon(x\cdot y_{(1)})\varepsilon(y_{(2)}\cdot z)\\
\nonumber
\varepsilon(x\cdot y\cdot z)&=\varepsilon(x\cdot y_{(2)})\varepsilon(y_{(1)}\cdot z)
\end{align}
\item[(c)] Weak comultiplicativity of the unit:
\begin{align}
\nonumber
(\Delta\otimes id_H)\circ \Delta(1)&=(\Delta(1)\otimes 1)\cdot (1\otimes \Delta(1))\\
\nonumber
(\Delta\otimes id_H)\circ \Delta(1)&=(1\otimes \Delta(1))\cdot (\Delta(1)\otimes 1)
\end{align}
\end{itemize}
\item[(iv)] $S:H\rightarrow H$ is a $k$-linear map called the antipode which satisifes the following for all $x\in H$:
\begin{itemize}
\item[(a)] $x_{(1)}\cdot S(x_{(2)})=\varepsilon(1_{(1)}\cdot x)1_{(2)}$
\item[(b)] $S(x_{(1)})\cdot x_{(2)}=1_{(1)}\varepsilon(x\cdot 1_{(2)})$
\item[(c)] $S(x_{(1)})\cdot x_{(2)}\cdot S(x_{(3)})=S(x)$
\end{itemize}
\end{itemize}
\end{definition}
\begin{remark}
In Definition \ref{QuantumGroupoidDef}, Sweedler notation was applied so that $\Delta(a)$ is written as $\Delta(a)=a_{(1)}\otimes a_{(2)}$.
\end{remark}
\begin{remark}
\label{QuantumGrpdRemark1}
Its a straightforward exercise to show the following:
\begin{itemize}
\item[1.] Every Hopf algebra is a quantum groupoid.
\item[2.] For a quantum groupoid $H$, the following statements are equivalent:
\begin{itemize}
\item[(i)] $H$ is a Hopf algebra
\item[(ii)] $\Delta(1)=1\otimes 1$
\item[(iii)] $\varepsilon(x\cdot y)=\varepsilon(x)\varepsilon(y)$ for all $x,y\in H$
\end{itemize}
\end{itemize}
\end{remark}
\begin{example}
Any finite groupoid $\C{G}$ defines a quantum groupoid
\begin{equation}
\nonumber
(k[\C{G}],\cdot,1,\Delta,\varepsilon,S)
\end{equation}
where 
\begin{itemize}
\item[1.] $k[\C{G}]:=\bigoplus_{g\in \C{G}_1} kg$ as a vector space over $k$
\item[2.] $\cdot$ is the multiplication on $k[\C{G}]$ induced by the composition of morphisms, that is, for $g,h\in \C{G}_1$, $g\cdot h=gh$ if $s(g)=t(h)$ and $g\cdot h=0$ if $s(g)\neq t(h)$
\item[3.] $1:=\sum_{\mathrm{x}\in \C{G}_0} e_\mathrm{x}$
\item[4.] $\Delta: k[\C{G}]\rightarrow k[\C{G}]\otimes_k k[\C{G}]$ is the $k$-linear map induced by $g\mapsto g\otimes g$ for all $g\in \C{G}_1$
\item[5.] $\varepsilon: k[\C{G}]\rightarrow k$ is the $k$-linear map induced by $g\mapsto 1_k$ for all $g\in \C{G}_1$ where $1_k$ is the unit element of $k$
\item[6.] $S: k[\C{G}]\rightarrow k[\C{G}]$ is the $k$-linear map induced by $g\mapsto g^{-1}$ for all $g\in \C{G}_1$.
\end{itemize}
\end{example}
\noindent We conclude this section by recalling a few things about \textit{quasitriangular} quantum groupoids \cite{NTV}; we begin with its definition.
\begin{definition}
\label{QuasitriangularQuantumDef}
A quasitriangular quantum groupoid is a tuple $(H,\cdot,1,\Delta,\varepsilon,S,R)$ where 
\begin{itemize}
\item[(i)] $(H,\cdot,1,\Delta,\varepsilon,S)$ is a quantum groupoid, and
\item[(ii)] $R\in \Delta^{op}(1)(H\otimes_k H)\Delta(1)$ satisfies the following conditions for all $h\in H$:
\begin{align}
\Delta^{op}(h)R&=R\Delta(h)\\
(id_H\otimes \Delta)(R)&=R_{13}\cdot R_{12}\\
(\Delta\otimes id_H)(R)&=R_{13}\cdot R_{23}
\end{align}
where $\Delta^{op}$ is the opposite coproduct, $R_{12}=R\otimes 1$, $R_{23}=1\otimes R$, and $R_{13}=R^{(1)}\otimes 1\otimes R^{(2)}$.  In addition, there exists $\overline{R}\in \Delta(1)(H\otimes_k H)\Delta^{op}(1)$ such that 
\begin{align}
R\cdot \overline{R}&=\Delta^{op}(1)\\
\overline{R}\cdot R&=\Delta(1).
\end{align}
\end{itemize}
\end{definition}
\begin{remark}
In Definition \ref{QuasitriangularQuantumDef}, the R-matrix $R$ was written as
\begin{equation}
\nonumber
R=R^{(1)}\otimes R^{(2)}
\end{equation}
to simplify notation.
\end{remark}
A Drinfeld double construction was introduced in \cite{NTV} for generating quasitriangular quantum groupoids from existing quantum groupoids.  When this construction is applied to the quantum groupoid $k[\mathcal{G}]$, the result is the quasitriangular quantum groupoid $D(k[\mathcal{G}])$ which is defined as follows:
\begin{itemize}
\item[1.] As a vector space over $k$, $D(k[\mathcal{G}])$ has basis 
\begin{equation}
\{\p{g}{x}~|~g,x\in \mathcal{G}_1,~s(g)=t(g)=t(x)\}.
\end{equation}
\item[2.] For $\p{g}{x},~\p{h}{y}\in D(k[\mathcal{G}])$, the multiplication law is given by
\begin{equation}
\label{DkG_product}
\p{g}{x}\cdot \p{h}{y}:=\delta_{x^{-1}gx, h}~\p{g}{xy}.
\end{equation} 
(Note that when $x^{-1}gx=h$,  $xy$ is defined since $s(x)=s(h)=t(y)$).
\item[3.] The unit of $D(k[\mathcal{G}])$ is 
\begin{equation}
1=\sum_{\mathrm{x}\in \C{G}_0} 1^\mathrm{x}
\end{equation}
where
\begin{equation}
\label{1x}
1^\mathrm{x}:=\sum_{g\in \Gamma^\mathrm{x}}\p{g}{e_\mathrm{x}}.
\end{equation}
\item[4.] The coproduct of $D(k[\mathcal{G}])$ is defined as
\begin{equation}
\triangle_D(\p{g}{x}):= \sum_{\{g_1,g_2\in \Gamma^{t(x)}~|~g_1g_2=g\}} \p{g_1}{x}\otimes \p{g_2}{x}
\end{equation}
\item[5.] The counit of $D(k[\mathcal{G}])$ is defined as
\begin{equation}
\varepsilon_D(\p{g}{x})=\delta_{g,xx^{-1}}.
\end{equation}
\item[6.] The antipode of $D(k[\mathcal{G}])$ is defined as
\begin{equation}
S(\p{g}{x})=\p{x^{-1}g^{-1}x}{x^{-1}}
\end{equation}
\item[7.] The R-matrix is 
\begin{equation}
\label{RmatrixDkG}
R:=\sum_{\mathrm{x}\in \mathcal{G}_0} R^\mathrm{x}
\end{equation}
where 
\begin{equation}
\label{Rx}
R^\mathrm{x}:=\sum_{g,h\in \Gamma^{\mathrm{x}}} \p{g}{e_\mathrm{x}}\otimes \p{h}{g}.
\end{equation}
\end{itemize}
\begin{remark}
\label{coproduct_unit}
Note that unless $\C{G}$ has a single object, $\Delta_D$ does not preserve the unit since
\begin{align}
\label{coproduct_unit1}
\Delta_D(1^\mathrm{x})&=\sum_{g\in \Gamma^\mathrm{x}}\Delta_D(\p{g}{e_\mathrm{x}})=\sum_{g\in \Gamma^{\mathrm{x}}}~\sum_{\{g_1,g_2\in \Gamma^\mathrm{x}~|~g_1g_2=g\}}\p{g_1}{e_{\mathrm{x}}}\otimes\p{g_2}{e_\mathrm{x}}=1^\mathrm{x}\otimes1^\mathrm{x}
\end{align}
and 
\begin{align}
\label{coproduct_unit2}
\Delta_D(1)=\sum_{\mathrm{x}\in \C{G}_0} \Delta_D(1^\mathrm{x})= \sum_{\mathrm{x}\in \C{G}_0}1^\mathrm{x}\otimes 1^\mathrm{x}\neq \sum_{\mathrm{x},\mathrm{y}\in \C{G}_0}1^\mathrm{x}\otimes 1^\mathrm{y}=1\otimes 1.
\end{align}
So by Remark \ref{QuantumGrpdRemark1}, $D(k[\C{G}])$ is a Hopf algebra only when $\C{G}$ is a one-object groupoid (i.e., a group).  In the case when $\C{G}$ is the one-object groupoid whose set of morphisms is the group $G$, $D(k[\C{G}])$ is exactly $D(k[G])$, the ordinary Drinfeld double of the Hopf algebra $k[G]$.
\end{remark}

\subsection{Quantum Groupoids $\&$ Category Theory}
It was shown in \cite{NV} that for a quantum groupoid $H$, $\mbox{Rep}(H)$\footnote{$\mbox{Rep}(H)$ is the category whose objects are finite dimensional left $H$-modules and whose morphisms are $H$-linear maps.} is a monoidal category.   To define the monoidal product, let $(\rho_1,A_1)$ and $(\rho_2,A_2)$ be objects of $\mbox{Rep}(H)$.   Then
\begin{equation}
 (\rho_1,A_1)\otimes (\rho_2,A_2):=(\rho_{12},A_1\widehat{\otimes}A_2)
 \end{equation}
where the $H$-action $\rho_{12}$ is induced by the coproduct $\Delta$ of $H$ via
\begin{equation}
\label{action1}
\rho_{12}(h):=[\rho_1\otimes \rho_2]\circ \Delta(h),~h\in H
\end{equation}
and 
\begin{align}
\nonumber
A_1\widehat{\otimes}A_2&:=\{a\in A_1\otimes_k A_2~|~\rho_{12}(1)a=a\}\\
\nonumber
&=\rho_{12}(1)(A_1\otimes_k A_2)\\
\label{tensor_product1}
&=[\rho_1(1_{(1)})\otimes \rho_2(1_{(2)})]\left(A_1\otimes_k A_2\right)
\end{align}
where the second equality follows from the fact that $\Delta(1)\cdot \Delta(1)=\Delta(1)$.  The monoidal product of morphisms is simply the restriction of the usual tensor product  of linear maps.

For the unit object, let $\varepsilon_t:H\rightarrow H$ be defined by
\begin{equation}
\label{epsilon_t}
\varepsilon_t(h):=\varepsilon(1_{(1)}\cdot h)1_{(2)}
\end{equation}
where $h\in H$ and $\varepsilon$ is the counit of $H$.  Then the unit object of $\mbox{Rep}(H)$ is $I=(\sigma_t,H_t)$ where 
\begin{equation}
H_t:=\varepsilon_t(H),
\end{equation}
and for $h\in H$ and $z\in H_t$,
\begin{equation}
\sigma_t(h)z:=\varepsilon_t(h\cdot z).
\end{equation}

If $(\rho_i,A_i)$ are objects of $\mbox{Rep}(H)$ for $i=1$, $2$, and $3$, then the associator 
\begin{equation}
\Phi_{123}: (A_1\widehat{\otimes} A_2)\widehat{\otimes}A_3\stackrel{\sim}{\longrightarrow} A_1\widehat{\otimes} (A_2\widehat{\otimes}A_3)
\end{equation}
is the trivial one.

To define the left$\backslash$right unit morphisms, let $(\rho,A)$ be an object of $\mbox{Rep}(H)$.  Then the left morphism 
\begin{equation}
l_A: H_t\widehat{\otimes} A\stackrel{\sim}{\longrightarrow} A
\end{equation}
is defined by
\begin{equation}
\label{l_Aaction}
l_A\left(\sigma_t(1_{(1)})z\otimes\rho(1_{(2)})a\right):=\rho(z)a
\end{equation}
where $z\in H_t$ and $a\in A$; the right morphism
\begin{equation}
r_A: A\widehat{\otimes} H_t\stackrel{\sim}{\longrightarrow} A
\end{equation}
is defined by 
\begin{equation}
\label{r_Aaction}
r_A\left(\rho(1_{(1)})a\otimes \sigma_t(1_{(2)})z\right):=\rho(S(z))a
\end{equation}
where $z\in H_t$, $a\in A$, and $S$ is the antipode of $H$.

If $H$ is also quasitriangular with R-matrix $R$, then $\mbox{Rep}(H)$ is also braided \cite{NTV}.  For any two objects $(\rho_1,A_1)$ and $(\rho_2,A_2)$ of $\mbox{Rep}(H)$, the braiding
\begin{equation}
c_{A_1,A_2}: A_1\widehat{\otimes} A_2\rightarrow A_2\widehat{\otimes} A_1
\end{equation}
 is defined by
 \begin{equation}
 \label{BraidingDef}
 c_{A_1,A_2}(x):=\rho_2(R^{(2)})x^{(2)}\otimes \rho_1(R^{(1)})x^{(1)}
 \end{equation}
where $x=x^{(1)}\otimes x^{(2)}\in A_1\widehat{\otimes} A_2$.
\subsubsection{Frobenius Objects}
Throughout this section, $\left(\C{C},\otimes,I,\Phi,l,r,c\right)$ will denote a braided monoidal category where $\C{C}$ is a small category, $\otimes$ is the monoidal product, $I$ is the unit object, $\Phi$ is the associator, $l$ and $r$ are the left and right identity maps, and $c$ is the braiding.
\vspace*{0.1in} 
\begin{definition}
\label{AlgebraObjectDef}
An algebra object is a tuple $\left(A,m,\mu\right)$ where 
\begin{itemize}
\item[] $A$ is an object of $\C{C}$, 
\item[] $m: A\otimes A\rightarrow A$ is a morphism of $\C{C}$ called the product, and
\item[] $\mu: I\rightarrow A$ is a morphism of $\C{C}$ called the unit
\end{itemize}
which satisfy the following two conditions:
\begin{itemize}
\item[1.] $m\circ (id_A\otimes m)\circ \Phi_{A,A,A}=m\circ (m\otimes id_A)$  (associativity)
\item[2.] $m\circ(\mu\otimes id_A)=l_A$, $m\circ(id_A\otimes \mu)=r_A$ (unit property)
\end{itemize}
$\left(A,m,\mu\right)$ is said to be commutative if $m\circ c_{A,A}=m$.
\end{definition}
\begin{definition}
\label{CoalgebraObjectDef}
A coalgebra object is a tuple $\left(A,\Delta,\varepsilon\right)$ where 
\begin{itemize}
\item[] $A$ is an object of $\C{C}$, 
\item[] $\Delta: A\rightarrow A\otimes A$ is a morphism of $\C{C}$ called the coproduct, and
\item[] $\varepsilon: A\rightarrow I$ is a morphism of $\C{C}$ called the counit
\end{itemize}
which satisfy the following two conditions:
\begin{itemize}
\item[1.] $(id_A\otimes \Delta)\circ\Delta=\Phi_{A,A,A}\circ(\Delta\otimes id_A)\circ \Delta$ (coassociativity)
\item[2.] $l_A\circ(\varepsilon\otimes id_A)\circ\Delta=id_A=r_A\circ(id_A\otimes \varepsilon)\circ \Delta$ (counit property)
\end{itemize}
$\left(A,\Delta,\varepsilon\right)$ is said to be co-commutative if $c_{A,A}\circ\Delta=\Delta$.
\end{definition}
\begin{definition}
\label{FrobObjDef}
A Frobenius object is a tuple $\left(A,m,\Delta,\mu,\varepsilon\right)$ where $\left(A,m,\mu\right)$ is a commutative algebra object and $\left(A,\Delta,\varepsilon\right)$ is a co-commutative coalgebra object which satisfies
\begin{align}
\label{FrobeniusAxiom1}
\Delta\circ m&=(m\otimes id_A)\circ \Phi_{A,A,A}^{-1}\circ(id_A\otimes \Delta)\\
\label{FrobeniusAxiom2}
\Delta\circ m&=(id_A\otimes m)\circ \Phi_{A,A,A}\circ(\Delta\otimes id_A).
\end{align} 
\end{definition}
\begin{remark}
Throughout this paper, we will disregard $\Phi$ from our expressions since $\mbox{Rep}(D(k[\C{G}]))$ (our category of interest) has a trivial associator.
\end{remark}
\section{$\C{G}$-FAs \& Frobenius Objects in $\mbox{Rep}(D(k[\C{G}]))$}
We begin this section by recalling the axiomatic definition of a $\C{G}$-FA \cite{P}:
\begin{definition}
\label{DefGrpdFA}
A $\mathcal{G}$-Frobenius Algebra ($\mathcal{G}$-FA) is given by the following data
\begin{equation}
\nonumber
<\mathcal{G},(A,\m{}{},\textbf{1}_A),\eta,\varphi>
\end{equation}
where
\begin{itemize}
\item[(a)] $\mathcal{G}=(\mathcal{G}_0,\mathcal{G}_1)$ is a finite groupoid.
\item[(b)] $(A,\m{}{},\textbf{1}_A)$ is a finite dimensional associative algebra over $k$ with product $\m{}{}$ and unit $\textbf{1}_A$ which splits as a direct sum of algebras which are indexed by the objects of $\mathcal{G}$:
\begin{equation}
A=\bigoplus_{\mathrm{x}\in \mathcal{G}_0}A^{\mathrm{x}}.
\end{equation}
\item[(c)] $\eta: A\times A\rightarrow k$ is a bilinear form.
\item[(d)] $\varphi$ is a $\mathcal{G}$-action which acts on $A$ by algebra homomorphisms, that is, if (1) $x\in \mathcal{G}_1$ with $s(x)=\mathrm{x}$ and $t(x)=\mathrm{y}$, then $\varphi(x): A^\mathrm{x}\rightarrow A^\mathrm{y}$ is an algebra isomorphism, (2) if $g,h\in \mathcal{G}_1$ and $s(h)=t(g)$, then $\varphi(h)\circ \varphi(g)=\varphi(hg)$, and (3) $\varphi(e_\mathrm{x})=id_{A^\mathrm{x}}$
\end{itemize}
\textit{which satisfies the following for all $\mathrm{x},~\mathrm{y}\in \mathcal{G}_0$}: 
\begin{itemize}
\item[(i)] $A^\mathrm{x}=\bigoplus_{g\in \Gamma^\mathrm{x}}A^\mathrm{x}_g$ is a $\Gamma^\mathrm{x}$-graded algebra.  
\item[(ii)] if $a^{\mathrm{x}}\in A^\mathrm{x}$ and $b^{\mathrm{y}}\in A^\mathrm{y}$, then $\m{a^{\mathrm{x}}}{b^{\mathrm{y}}}=\delta_{\mathrm{x},\mathrm{y}}~\m{a^\mathrm{x}}{b^\mathrm{y}}\in A^\mathrm{x}$.   
\item[(iii)] $\eta(\m{a}{b},c)=\eta(a,\m{b}{c})$ for all $a,b,c\in A$.
\item[(iv)] $\eta(\varphi(h)a^\mathrm{x},\varphi(h)b^\mathrm{x})=\eta(a^\mathrm{x},b^\mathrm{x})$ for all $a^\mathrm{x},b^\mathrm{x}\in A^\mathrm{x}$ and $h\in \mathcal{G}_1$ satisfying $s(h)=\mathrm{x}$.
\item[(v)] $\varphi(x)A^\mathrm{x}_g\subset A^{\mathrm{y}}_{xgx^{-1}}$ for a morphism $x\in \C{G}_1$ satisfying $s(x)=\mathrm{x}$ and $t(x)=\mathrm{y}$.
\item[(vi)] $\eta|_{A^\mathrm{x}_g\times A^\mathrm{x}_h}$ is nondegenerate for $gh=e_\mathrm{x}$ and zero otherwise.
\item[(vii)] $\m{a_g^\mathrm{x}}{a_h^\mathrm{x}}=\m{\left(\varphi(g)a^\mathrm{x}_h\right)}{a^\mathrm{x}_g}\in A^\mathrm{x}_{gh}$ for $a_g^\mathrm{x}\in A^\mathrm{x}_g$ and $a^\mathrm{x}_h\in A^\mathrm{x}_h$. 
\item[(viii)] $\varphi(g)|_{A^\mathrm{x}_g}=id_{A^\mathrm{x}_g}$
\item[(ix)] if $g,h\in \Gamma^\mathrm{x}$, $c\in A^\mathrm{x}_{ghg^{-1}h^{-1}}$, and $l_c:A\rightarrow A$ is the linear map induced by left multiplication by $c$, then
\begin{equation}
\mbox{Tr}\left(l_c\circ \varphi(h)|_{A^\mathrm{x}_g}: A^\mathrm{x}_g\rightarrow A^\mathrm{x}_g\right)= \mbox{Tr}\left(\varphi(g^{-1})\circ l_c|_{A^\mathrm{x}_h}: A^\mathrm{x}_h\rightarrow A^\mathrm{x}_h\right)
\end{equation}
where Tr denotes the trace.
\end{itemize}
\end{definition}
\begin{remark}
The definition of groupoid Frobenius algebras given in \cite{P} was stated in terms of group Frobenius algebras.  In an effort to make Definition \ref{DefGrpdFA} self contained, we have reworded the original definition of \cite{P} to avoid any reference to group Frobenius algebras. 
\end{remark}
\begin{remark}
In the special case when $\C{G}$ is the one-object groupoid whose set of morphisms is the group $G$, Definition \ref{DefGrpdFA} reduces to the definition of a $G$-Frobenius algebra.
\end{remark}
\noindent We now state the main result of this paper:
\begin{theorem}
\label{MainTheorem}
Every $\C{G}$-FA is derived from a Frobenius object $\left((\rho,A),m,\Delta,\mu,\varepsilon\right)$ in $\mbox{Rep}\left(D(k[\C{G}])\right)$ which satisfies 
\begin{itemize}
\item[(1)] $\sum_{\mathrm{x}\in \C{G}_0}\sum_{g\in \Gamma^\mathrm{x}} \rho(\p{g}{g})=id_A$\\
\item[(2)] $\mbox{Tr}( l_c\circ\rho(\p{hgh^{-1}}{h}))=\mbox{Tr}(\rho(\p{h}{g^{-1}})\circ l_c\circ \rho(\p{h}{e_\mathrm{x}}))$ $\forall$ $\mathrm{x}\in \C{G}_0$, $g,h\in \Gamma^\mathrm{x}$, and $c\in \rho(\p{ghg^{-1}h^{-1}}{e_\mathrm{x}})A$ where Tr denotes the trace and $l_c$ is the $k$-linear map defined by $l_c(v)=m(c\otimes v)$ for $v\in \rho(1^\mathrm{x})A$.
\end{itemize}
In addition, every Frobenius object in $\mbox{Rep}\left(D(k[\C{G}])\right)$ which satisfies conditions (1) and (2) induces a $\C{G}$-FA.
\end{theorem}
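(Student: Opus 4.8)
\section*{Proof proposal}

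The plan is to set up two explicit, mutually inverse constructions: one sending a $\C{G}$-FA to a Frobenius object in $\mbox{Rep}(D(k[\C{G}]))$ satisfying (1) and (2), and one going back. Starting from a $\C{G}$-FA $\langle\C{G},(A,\bullet,\mathbf{1}_A),\eta,\varphi\rangle$, I would first put a $D(k[\C{G}])$-module structure on the underlying space $A$ by letting $\rho(\p{g}{x})$ act on $a\in A^{\mathrm{z}}$ by $\delta_{\mathrm{z},s(x)}\,\varphi(x)\bigl(\pi^{s(x)}_{x^{-1}gx}(a)\bigr)$, where $\pi^{\mathrm{x}}_h\colon A^{\mathrm{x}}\to A^{\mathrm{x}}_h$ is the projection onto the $\Gamma^{\mathrm{x}}$-grade $h$ (this is legitimate since $g\in\Gamma^{t(x)}$ and hence $x^{-1}gx\in\Gamma^{s(x)}$). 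A short check against the multiplication law \eqref{DkG_product}, the functoriality of $\varphi$ in Definition \ref{DefGrpdFA}(d), and condition (v) shows $\rho$ is an algebra homomorphism with $\rho(1)=\mathrm{id}_A$, so $(\rho,A)\in\mbox{Rep}(D(k[\C{G}]))$. Using Remark \ref{coproduct_unit} one identifies $A\widehat{\otimes}A=\bigoplus_{\mathrm{x}}A^{\mathrm{x}}\otimes_k A^{\mathrm{x}}$ and $H_t=\bigoplus_{\mathrm{x}}k\,1^{\mathrm{x}}$; then $m$ is the restriction of $\bullet$ (well defined by condition (ii)), $\mu(1^{\mathrm{x}}):=\mathbf{1}_{A^{\mathrm{x}}}$, and $\varepsilon(a^{\mathrm{x}}):=\eta(a^{\mathrm{x}},\mathbf{1}_{A^{\mathrm{x}}})\,1^{\mathrm{x}}$, while $\Delta$ is the standard Frobenius coproduct built from $m$ and the copairing dual to $\eta$, which by condition (vi) pairs $A^{\mathrm{x}}_g$ with $A^{\mathrm{x}}_{g^{-1}}$ and hence lies in $A\widehat{\otimes}A$.

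The verification that $\left((\rho,A),m,\Delta,\mu,\varepsilon\right)$ is a Frobenius object then breaks into: (a) $D(k[\C{G}])$-linearity of $m,\mu,\varepsilon,\Delta$, which uses conditions (i), (iv), (v), the fact that each $\varphi(x)$ is a unital algebra isomorphism, and the explicit formulas for $\varepsilon_t$, $\sigma_t$, and the antipode $S$ on $D(k[\C{G}])$; (b) the algebra and coalgebra axioms of Definitions \ref{AlgebraObjectDef} and \ref{CoalgebraObjectDef}, which follow from associativity of $\bullet$, the unit property of $\mathbf{1}_A$, and the usual Frobenius-algebra bookkeeping; (c) commutativity $m\circ c_{A,A}=m$ and co-commutativity $c_{A,A}\circ\Delta=\Delta$, for which I would compute the braiding \eqref{BraidingDef} from the $R$-matrix \eqref{RmatrixDkG}, obtaining $c_{A,A}(a^{\mathrm{x}}_g\otimes b^{\mathrm{x}}_h)=\varphi(g)(b^{\mathrm{x}}_h)\otimes a^{\mathrm{x}}_g$, so that $m\circ c_{A,A}=m$ is exactly condition (vii) and co-commutativity follows from (vii) together with the resulting braided symmetry of the pairing $\varepsilon\circ m$; (d) the Frobenius relations \eqref{FrobeniusAxiom1}--\eqref{FrobeniusAxiom2}, which are automatic once $\varepsilon\circ m$ is an associative nondegenerate pairing (conditions (iii), (vi)). Finally, conditions (1) and (2) of the theorem are checked directly: one computes that $\rho(\p{g}{g})$ restricts to $\varphi(g)|_{A^{\mathrm{x}}_g}$ on $A^{\mathrm{x}}_g$ and kills the other grades, so (1) is precisely condition (viii); and unwinding $\rho(\p{hgh^{-1}}{h})$, $\rho(\p{h}{g^{-1}})$, $\rho(\p{h}{e_\mathrm{x}})$ shows the two sides of (2) are the two sides of condition (ix).

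For the converse, given a Frobenius object $\left((\rho,A),m,\Delta,\mu,\varepsilon\right)$ satisfying (1) and (2), I would reverse each identification above. Set $A^{\mathrm{x}}:=\rho(1^{\mathrm{x}})A$ and, for $g\in\Gamma^{\mathrm{x}}$, $A^{\mathrm{x}}_g:=\rho(\p{g}{e_\mathrm{x}})A$; since the $1^{\mathrm{x}}$ sum to $1$ and are orthogonal idempotents, and for fixed $\mathrm{x}$ the $\p{g}{e_\mathrm{x}}$ ($g\in\Gamma^\mathrm{x}$) are orthogonal idempotents summing to $1^{\mathrm{x}}$, one gets $A=\bigoplus_{\mathrm{x}}A^{\mathrm{x}}$ and $A^{\mathrm{x}}=\bigoplus_{g}A^{\mathrm{x}}_g$. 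Define $\varphi(x):=\sum_{g\in\Gamma^{t(x)}}\rho(\p{g}{x})$; functoriality, $\varphi(e_\mathrm{x})=\mathrm{id}_{A^\mathrm{x}}$, and condition (v) all follow from \eqref{DkG_product}, and $\varphi(x)$ is invertible with inverse $\varphi(x^{-1})$. Define $\bullet$ as $m$ on $\bigoplus_{\mathrm{x}}A^{\mathrm{x}}\otimes_k A^{\mathrm{x}}=A\widehat{\otimes}A$, extended by zero across distinct objects, with unit $\mathbf{1}_A:=\sum_{\mathrm{x}}\mu(1^{\mathrm{x}})$; and define $\eta(a^{\mathrm{x}},b^{\mathrm{x}})$ to be the $1^{\mathrm{x}}$-coefficient of $\varepsilon(m(a^{\mathrm{x}}\otimes b^{\mathrm{x}}))$. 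Then $D(k[\C{G}])$-linearity of $m$, $\mu$, $\varepsilon$, $\Delta$ yields conditions (i)--(v), commutativity of $m$ yields (vii), the Frobenius relations together with co-commutativity yield the graded nondegeneracy (vi) (with the grading constraint $gh=e_\mathrm{x}$ coming from the explicit form of $\varepsilon_t$), and conditions (1)--(2) of the theorem yield (viii)--(ix). A final short argument shows the two constructions are mutually inverse.

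The routine part of all this is the algebra side: the decompositions, the $\C{G}$-action, and the product $\bullet$ are essentially forced by \eqref{DkG_product} and \eqref{DkG_product} alone together with $\Delta_D$, $\varepsilon_D$. I expect the genuine work to be on the coalgebra side — in the forward direction, checking that the Frobenius copairing built from $\eta$ is truly $D(k[\C{G}])$-invariant (not merely a $k$-linear map) and lands in $A\widehat{\otimes}A$, and in the reverse direction, extracting from co-commutativity and the Frobenius relations precisely the \emph{graded} nondegeneracy of (vi) with the correct grading restriction. A secondary but pervasive nuisance is keeping track of the source/target constraints on the symbols $\p{g}{x}$ throughout, since $D(k[\C{G}])$ is not a Hopf algebra (Remark \ref{coproduct_unit}) and $\Delta_D$, $\varepsilon_D$, $\varepsilon_t$, $\sigma_t$ all behave differently from the group case treated in \cite{KP}.
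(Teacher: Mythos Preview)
Your proposal is correct and follows essentially the same approach as the paper: the same $D(k[\C{G}])$-action $\rho$, the same $\C{G}$-action $\varphi(x)=\sum_{g}\rho(\p{g}{x})$, the same $m,\mu,\varepsilon$, the same Frobenius coproduct $\Delta$ built from $\eta$ (the paper writes it as $(\psi^{-1}\otimes\psi^{-1})\circ (m^{op})^{\ast}\circ\psi$ with $\psi(a)=\eta(a,-)$), and the same identification of where the work lies. One small correction: in the converse direction the paper obtains the graded nondegeneracy (vi) from the counit property together with the two Frobenius relations \eqref{FrobeniusAxiom1}--\eqref{FrobeniusAxiom2}, not from co-commutativity.
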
 

\noindent We now dedicate the remainder of the paper to the proof of Theorem \ref{MainTheorem}. 

\subsection{$\C{G}$-FAs via Frobenius objects}
\noindent In this section, we show that every Frobenius object in $\mbox{Rep}(D(k[\C{G}]))$ which satisfies conditions (1) and (2) of Theorem \ref{MainTheorem} corresponds to a particular $\C{G}$-FA.  
\vspace*{0.05in}\\
\noindent We begin by showing that every left $D(k[\C{G}])$-module has a canonical direct sum decomposition and left $\C{G}$-action which resembles that of a $\C{G}$-FA.
\begin{proposition}
\label{DirectSum1}
Let $(\rho,A)$ be a left $D(k[\C{G}])$-module.  Then
\begin{itemize}
\item[(i)] $A$ has a direct sum decomposition $A=\bigoplus_{\mathrm{x}\in \C{G}_0}A^\mathrm{x}$ which is indexed by the objects of $\C{G}$ where $A^\mathrm{x}:=\rho(1^\mathrm{x})A$.  In particular, $\rho(1^\mathrm{y})a^\mathrm{x}=\delta_{\mathrm{x},\mathrm{y}}a^\mathrm{x}$ for $a^\mathrm{x}\in A^\mathrm{x}$.
\item[(ii)] For each $\mathrm{x}\in \C{G}_0$, $A^\mathrm{x}$ has a direct sum decompostion $A^\mathrm{x}=\bigoplus_{g\in \Gamma^\mathrm{x}}A^\mathrm{x}_g$ where $A^\mathrm{x}_g:=\rho(\p{g}{e_\mathrm{x}})A^\mathrm{x}=\rho(\p{g}{e_\mathrm{x}})A$.  In particular, $\rho(\p{h}{e_\mathrm{y}})a^\mathrm{x}_g=\delta_{g,h}a^\mathrm{x}_g$ for $a^\mathrm{x}_g\in A^\mathrm{x}_g$, $\mathrm{y}\in \C{G}_0$, and $h\in \Gamma^\mathrm{y}$.
\item[(iii)] For all $\mathrm{x},\mathrm{y}\in\C{G}_0$, $g\in \Gamma^\mathrm{x}$, $h\in \Gamma^\mathrm{y}$, and $x\in \C{G}_1$ with $t(x)=\mathrm{x}$, 
\begin{itemize}  
\item[(a)]  $\rho(\p{g}{x})a^\mathrm{y}_h=0$ for all $a^\mathrm{y}_h\in A^\mathrm{y}_h$ with $h\neq x^{-1}gx$, and
\item[(b)] $\rho(\p{g}{x})$ is a vector space isomorphism from $A^{s(x)}_{x^{-1}gx}$ to $A^{t(x)}_g$.
\end{itemize}
\end{itemize}
\end{proposition}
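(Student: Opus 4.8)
The plan is to unpack the module structure of $D(k[\C{G}])$ directly, using the multiplication law \eqref{DkG_product} together with the explicit formulas for $1^\mathrm{x}$ and $\p{g}{e_\mathrm{x}}$, and to show that the relevant elements behave as orthogonal idempotents. The key observation for part (i) is that $1^\mathrm{x} = \sum_{g\in\Gamma^\mathrm{x}}\p{g}{e_\mathrm{x}}$ satisfies $1^\mathrm{x}\cdot 1^\mathrm{y} = \delta_{\mathrm{x},\mathrm{y}}\,1^\mathrm{x}$: indeed $\p{g}{e_\mathrm{x}}\cdot\p{h}{e_\mathrm{y}} = \delta_{e_\mathrm{x}^{-1}g e_\mathrm{x},\,h}\,\p{g}{e_\mathrm{x}e_\mathrm{y}}$, and the Kronecker delta forces $\mathrm{x}=\mathrm{y}$ and $g=h$, whence $1^\mathrm{x}\cdot 1^\mathrm{y}=\delta_{\mathrm{x},\mathrm{y}}\sum_g \p{g}{e_\mathrm{x}}$. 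Since $1=\sum_\mathrm{x}1^\mathrm{x}$, the operators $\rho(1^\mathrm{x})$ are commuting idempotents summing to $id_A$ with pairwise products zero, so $A=\bigoplus_\mathrm{x}\rho(1^\mathrm{x})A$ and $\rho(1^\mathrm{y})a^\mathrm{x}=\delta_{\mathrm{x},\mathrm{y}}a^\mathrm{x}$ is immediate.

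For part (ii), the same computation already shows $\p{g}{e_\mathrm{x}}\cdot\p{h}{e_\mathrm{x}}=\delta_{g,h}\p{g}{e_\mathrm{x}}$, so the $\rho(\p{g}{e_\mathrm{x}})$ for $g\in\Gamma^\mathrm{x}$ are orthogonal idempotents summing to $\rho(1^\mathrm{x})$; this gives the decomposition $A^\mathrm{x}=\bigoplus_{g\in\Gamma^\mathrm{x}}A^\mathrm{x}_g$ and the stated action. One also needs the identity $A^\mathrm{x}_g=\rho(\p{g}{e_\mathrm{x}})A^\mathrm{x}=\rho(\p{g}{e_\mathrm{x}})A$, which follows because $\p{g}{e_\mathrm{x}}\cdot 1^\mathrm{x}=\p{g}{e_\mathrm{x}}$ (so acting on all of $A$ versus on $A^\mathrm{x}$ gives the same image) while $\p{g}{e_\mathrm{x}}\cdot 1^\mathrm{y}=0$ for $\mathrm{y}\neq\mathrm{x}$ kills the other summands.

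For part (iii), I would compute $\p{g}{x}\cdot\p{h}{e_\mathrm{y}}$ using \eqref{DkG_product}: since $\p{h}{e_\mathrm{y}}$ has "upper index" $e_\mathrm{y}$ with $t(e_\mathrm{y})=\mathrm{y}$, the product is nonzero only when $s(x)=\mathrm{y}$ and $x^{-1}gx=h$, in which case it equals $\p{g}{x e_\mathrm{y}}=\p{g}{x}$. This yields (a) at once: if $h\neq x^{-1}gx$ then $\rho(\p{g}{x})$ annihilates $A^\mathrm{y}_h$. For (b), with $h=x^{-1}gx$ (so $\mathrm{y}=s(x)$), the relation $\p{g}{x}\cdot\p{x^{-1}gx}{e_{s(x)}}=\p{g}{x}$ shows $\rho(\p{g}{x})$ maps $A^{s(x)}_{x^{-1}gx}$ into $A^{t(x)}_g$ (one checks the image lands in the $g$-component by multiplying on the left by $\p{g}{e_\mathrm{x}}$, using $\p{g}{e_\mathrm{x}}\cdot\p{g}{x}=\p{g}{x}$, and in the $t(x)$-block by multiplying by $1^{t(x)}$). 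The two-sided inverse is $\rho(\p{x^{-1}gx}{x^{-1}})$: using the multiplication law and $S$-type bookkeeping one verifies $\p{x^{-1}gx}{x^{-1}}\cdot\p{g}{x}=\p{x^{-1}gx}{x^{-1}x}=\p{x^{-1}gx}{e_{s(x)}}$ and $\p{g}{x}\cdot\p{x^{-1}gx}{x^{-1}}=\p{g}{xx^{-1}}=\p{g}{e_{t(x)}}$, which act as the identity on the respective graded pieces by part (ii).

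The main obstacle is purely bookkeeping: keeping the source/target constraints $s(g)=t(g)=t(x)$ straight when composing the $\p{\cdot}{\cdot}$ symbols, and being careful that "$xy$ is defined" only under the stated conditions. No genuinely hard idea is needed — everything reduces to the single multiplication rule \eqref{DkG_product} plus the observation that the symbols $\p{g}{e_\mathrm{x}}$ and $1^\mathrm{x}$ are (systems of) orthogonal idempotents — but one must track the conjugation $g\mapsto x^{-1}gx$ and the groupoid composability of morphisms with some care to avoid sign-of-direction errors in the upper indices.
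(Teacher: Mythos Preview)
Your proposal is correct and follows essentially the same route as the paper: both arguments reduce to recognizing that the $\p{g}{e_\mathrm{x}}$ (and hence the $1^\mathrm{x}$) form a complete system of orthogonal idempotents via the multiplication law~\eqref{DkG_product}, and that $\rho(\p{x^{-1}gx}{x^{-1}})$ is the two-sided inverse for (iii-b). The only cosmetic difference is that for (iii-a) the paper factors $\p{g}{x}=\p{g}{e_{t(x)}}\cdot\p{g}{x}\cdot\p{x^{-1}gx}{e_{s(x)}}$ and then invokes part~(ii), whereas you compute $\p{g}{x}\cdot\p{h}{e_\mathrm{y}}$ directly; these are equivalent one-line manipulations of the same identity.
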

\begin{proof}
Since 
\begin{equation}
\label{rhopgx0}
\p{g}{e_\mathrm{x}}\cdot \p{h}{e_\mathrm{y}}=\p{h}{e_\mathrm{y}}\cdot \p{g}{e_\mathrm{x}}=\delta_{g,h}~\p{g}{e_\mathrm{x}}
\end{equation}
for $g\in \Gamma^\mathrm{x}$ and $h\in \Gamma^\mathrm{y}$, it follows from (\ref{1x}) that  
\begin{equation}
1^\mathrm{x}\cdot 1^\mathrm{y}=\sum_{g\in \Gamma^\mathrm{x}}\sum_{h\in \Gamma^\mathrm{y}} \delta_{g,h}\p{g}{e_\mathrm{x}}=\delta_{\mathrm{x},\mathrm{y}}\sum_{g\in \Gamma^\mathrm{x}}\p{g}{e_\mathrm{x}}=\delta_{\mathrm{x},\mathrm{y}}~1^\mathrm{x}.
\end{equation}
Hence,
\begin{equation}
\label{rho1x}
\rho(1^\mathrm{x})\circ\rho(1^\mathrm{y})=\delta_{\mathrm{x},\mathrm{y}}~\rho(1^\mathrm{x}).
\end{equation}
It follows from (\ref{rho1x}) and the definition of $A^\mathrm{x}$ that 
\begin{equation}
\label{projection1}
\rho(1^\mathrm{y})a^\mathrm{x}=\delta_{\mathrm{x},\mathrm{y}}~ a^\mathrm{x}
\end{equation}
 for $a^\mathrm{x}\in A^\mathrm{x}$. 
 
Since $\rho(1)=id_A$, we also have
\begin{equation}
\label{Adirectsum}
A=\rho(1)A=\sum_{\mathrm{x}\in \C{G}_0}\rho(1^\mathrm{x})A=\sum_{\mathrm{x}\in \C{G}_0}A^\mathrm{x}.
\end{equation}
In addition,  if $\sum_{\mathrm{x}\in \C{G}_0}a^\mathrm{x}=0$ for $a^\mathrm{x}\in A^\mathrm{x}$, it follows from (\ref{projection1}) that
\begin{equation}
\label{Adirectsum1}
a^\mathrm{y}=\rho(1^\mathrm{y})\left(\sum_{\mathrm{x}\in \C{G}_0}a^\mathrm{x}\right)=0
\end{equation}
for all $\mathrm{y}\in \C{G}_0$.  (\ref{Adirectsum}) and (\ref{Adirectsum1}) then show that $A$ is a direct sum of the $A^\mathrm{x}$'s.  This completes the proof of (i).  

For (ii), note that by (\ref{projection1})
\begin{equation}
\label{Axdirectsum}
A^\mathrm{x}=\rho(1^\mathrm{x})A^\mathrm{x}=\sum_{g\in \Gamma^\mathrm{x}}\rho(\p{g}{e_\mathrm{x}})A^\mathrm{x}=\sum_{g\in \Gamma^\mathrm{x}}A^\mathrm{x}_g
\end{equation}
and by (\ref{rhopgx0})
\begin{equation}
\label{rhopgx}
\rho(\p{h}{e_\mathrm{y}})\circ \rho(\p{g}{e_\mathrm{x}})=\delta_{g,h}~\rho(\p{g}{e_\mathrm{x}}).
\end{equation}
It follows from (\ref{rhopgx}) and the definition of $A^\mathrm{x}_g$ that
\begin{equation}
 \label{projection2}
 \rho(\p{h}{e_\mathrm{y}})a^\mathrm{x}_g=\delta_{g,h}~a^\mathrm{x}_g
 \end{equation}
 for $a^\mathrm{x}_g\in A^\mathrm{x}_g$.   Using (\ref{Axdirectsum}) and (\ref{projection2}) and applying an argument similar to the one used to prove that $A=\oplus_{\mathrm{x}\in \C{G}_0}A^\mathrm{x}$ shows that $A^\mathrm{x}$ itself is a direct sum of the $A^\mathrm{x}_g$'s.  In addition, we also have
\begin{equation}
A^\mathrm{x}_g:=\rho(\p{g}{e_\mathrm{x}})A^\mathrm{x}=\rho(\p{g}{e_\mathrm{x}})\left(\rho(1^\mathrm{x})A\right)=\rho(\p{g}{e_\mathrm{x}})A,
\end{equation}
where the second equality follows from the definition of $A^\mathrm{x}$ and the third equality follows from the fact that $\p{g}{e_\mathrm{x}}\cdot 1^\mathrm{x}=\p{g}{e_\mathrm{x}}$.  This completes the proof of (ii).
 
(iii-a) follows from (\ref{projection2}) and the fact that
\begin{align}
\nonumber
\rho(\p{g}{x})&=\rho(\p{g}{e_{t(x)}}\cdot\p{g}{x}\cdot\p{x^{-1}gx}{e_{s(x)}})\\
\label{projection3}
&=\rho(\p{g}{e_{t(x)}})\circ\rho(\p{g}{x})\circ\rho(\p{x^{-1}gx}{e_{s(x)}}).
\end{align}  
(\ref{projection3}) also shows that $\rho(\p{g}{x})A^{s(x)}_{x^{-1}gx}\subset A^{t(x)}_g$.   To see that $\rho(\p{g}{x})$ is also an isomorphism, note that  
\begin{align}
\rho(\p{g}{x})\circ \rho(\p{x^{-1}gx}{x^{-1}})|_{A^{t(x)}_g}=\rho(\p{g}{x}\cdot \p{x^{-1}gx}{x^{-1}})|_{A^{t(x)}_g}=\rho(\p{g}{e_{t(x)}})|_{A^{t(x)}_g}=id_{A^{t(x)}_g}
\end{align}
and
\begin{equation}
 \rho(\p{x^{-1}gx}{x^{-1}})\circ\rho(\p{g}{x})|_{A^{s(x)}_{x^{-1}gx}}=\rho(\p{x^{-1}gx}{x^{-1}}\cdot\p{g}{x})|_{A^{s{(x)}}_g}=\rho(\p{x^{-1}gx}{e_{s(x)}})|_{A^{s(x)}_{x^{-1}gx}}=id_{A^{s(x)}_{x^{-1}gx}}.
 \end{equation}
 This completes the proof of (iii-b).
\end{proof}
%\noindent The next result shows that every left $D(k[\C{G}])$-module has a canonical left-$\C{G}$-action which is similar to that of a $\C{G}$-FA.
\begin{corollary}
\label{GroupoidActionCor}
Every left $D(k[\C{G}])$-module $(\rho,A)$ has a left $\C{G}$-action $\varphi$ which acts as a $k$-linear map on the direct sum decomposition $A=\oplus_{\mathrm{x}\in \C{G}_0} A^\mathrm{x}$ given by part (i) of Proposition \ref{DirectSum1}
where 
\begin{equation}
\label{Gaction}
\varphi(x):=\sum_{g\in \Gamma^{t(x)}} \rho(\p{g}{x})|_{A^{s(x)}}~\mbox{for}~x\in \C{G}_1.
\end{equation}
In addition, if $x\in \C{G}_1$ and $A^{s(x)}=\oplus_{h\in \Gamma^{s(x)}}A^{s(x)}_h$ is the direct sum decomposition given by part (ii) of Proposition \ref{DirectSum1}, then $\varphi(x)A^{s(x)}_g\subset A^{t(x)}_{xgx^{-1}}$.
\end{corollary}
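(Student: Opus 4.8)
The plan is to verify four things in turn: that each $\varphi(x)$ is a well-defined $k$-linear map $A^{s(x)}\to A^{t(x)}$, that it carries $A^{s(x)}_g$ into $A^{t(x)}_{xgx^{-1}}$, that $\varphi(e_\mathrm{x})=id_{A^\mathrm{x}}$, and that $\varphi(h)\circ\varphi(g)=\varphi(hg)$ whenever $s(h)=t(g)$; once these hold, invertibility of each $\varphi(x)$ comes for free from $\varphi(x^{-1})\circ\varphi(x)=\varphi(e_{s(x)})=id_{A^{s(x)}}$ and $\varphi(x)\circ\varphi(x^{-1})=id_{A^{t(x)}}$, so that $\varphi$ is a genuine $\C{G}$-action. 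The first two points are essentially already contained in Proposition \ref{DirectSum1}(iii): part (iii-b) says each summand $\rho(\p{g}{x})$ maps $A^{s(x)}_{x^{-1}gx}$ isomorphically onto $A^{t(x)}_g$, while (iii-a) says it annihilates every other graded piece of $A^{s(x)}$, so $\varphi(x)$ does take values in $A^{t(x)}$; and restricting $\varphi(x)$ to a fixed $A^{s(x)}_g$ kills all the summands except the one indexed by the unique $g'\in\Gamma^{t(x)}$ with $x^{-1}g'x=g$, namely $g'=xgx^{-1}$, so that $\varphi(x)|_{A^{s(x)}_g}=\rho(\p{xgx^{-1}}{x})|_{A^{s(x)}_g}$ and hence $\varphi(x)A^{s(x)}_g\subset A^{t(x)}_{xgx^{-1}}$.

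For the identity law, I would combine the identity $\sum_{g\in\Gamma^\mathrm{x}}\rho(\p{g}{e_\mathrm{x}})=\rho\bigl(\sum_{g\in\Gamma^\mathrm{x}}\p{g}{e_\mathrm{x}}\bigr)=\rho(1^\mathrm{x})$, which holds by (\ref{1x}), with the fact from Proposition \ref{DirectSum1}(i) that $\rho(1^\mathrm{x})$ restricts to $id_{A^\mathrm{x}}$ on $A^\mathrm{x}=\rho(1^\mathrm{x})A$.

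The main computational step is the composition law. Fix $g,h\in\C{G}_1$ with $s(h)=t(g)$. The image of $\varphi(g)$ lies in $A^{t(g)}=A^{s(h)}$, so $\varphi(h)\circ\varphi(g)$ is defined as a map $A^{s(g)}\to A^{t(h)}$; and since $\rho$ is an algebra homomorphism while each $\rho(\p{g'}{g})|_{A^{s(g)}}$ has image in $A^{t(g)}$ (again by Proposition \ref{DirectSum1}(iii)), I can write
\begin{equation}
\nonumber
\varphi(h)\circ\varphi(g)=\sum_{m\in\Gamma^{t(h)}}\;\sum_{g'\in\Gamma^{t(g)}}\rho\bigl(\p{m}{h}\cdot\p{g'}{g}\bigr)\big|_{A^{s(g)}}.
\end{equation}
The multiplication law (\ref{DkG_product}) gives $\p{m}{h}\cdot\p{g'}{g}=\delta_{h^{-1}mh,\,g'}\,\p{m}{hg}$, so for each $m\in\Gamma^{t(h)}$ only the single index $g'=h^{-1}mh$, which indeed lies in $\Gamma^{s(h)}=\Gamma^{t(g)}$, contributes, and the double sum collapses to $\sum_{m\in\Gamma^{t(h)}}\rho(\p{m}{hg})|_{A^{s(g)}}$. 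Since $t(hg)=t(h)$ and $s(hg)=s(g)$, this is exactly $\varphi(hg)$. The composition law is where the real content sits, and the only point requiring care is the bookkeeping --- keeping track of which vertex groups index the various sums, checking that the auxiliary morphisms such as $h^{-1}mh$ land in the expected vertex groups, and handling the restrictions to the summands of the decomposition in Proposition \ref{DirectSum1}(i)--(ii); I anticipate no conceptual obstacle beyond this, the argument being driven entirely by the product formula (\ref{DkG_product}).
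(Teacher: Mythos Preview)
Your proposal is correct and follows essentially the same approach as the paper: both use Proposition \ref{DirectSum1}(iii) for well-definedness and the grading statement, the product formula (\ref{DkG_product}) to collapse the double sum and obtain the composition law, and then derive invertibility from $\varphi(x^{-1})\circ\varphi(x)=\varphi(e_{s(x)})$. Your verification of $\varphi(e_\mathrm{x})=id_{A^\mathrm{x}}$ is in fact a bit cleaner than the paper's, since you recognize the sum as $\rho(1^\mathrm{x})$ directly and invoke Proposition \ref{DirectSum1}(i), whereas the paper decomposes into graded pieces and uses part (ii).
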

\begin{proof}
If $y\in \C{G}_1$ with $t(y)=s(x)$, then $\varphi(x)\circ \varphi(y)=\varphi(xy)$ since
\begin{equation}
\sum_{g\in \Gamma^{t(x)}}\p{g}{x}~\cdot \sum_{h\in \Gamma^{t(y)}}\p{h}{y}=\sum_{g\in \Gamma^{t(xy)}}\p{g}{xy}.
\end{equation}
It follows from the definition of $\varphi$ and part (iii) of Proposition \ref{DirectSum1} that $\varphi(x)$ is a linear map from $A^{s(x)}$ to $A^{t(x)}$.  

Next we verify that $\varphi(e_\mathrm{x})=id_{A^\mathrm{x}}$.   To do this, let $a^\mathrm{x}\in A^\mathrm{x}$.  Then $a^\mathrm{x}$ can be uniquely decomposed as $a^\mathrm{x}=\sum_{g\in \Gamma^\mathrm{x}}a^\mathrm{x}_g$ for $a^\mathrm{x}_g\in A^\mathrm{x}_g$.  By part (ii) of Proposition \ref{DirectSum1}, we have
\begin{align}
\label{grpaction_unit}
\varphi(e_\mathrm{x})a^\mathrm{x}&=\sum_{g\in \Gamma^\mathrm{x}}\rho(\p{g}{e_\mathrm{x}})a^\mathrm{x}=\sum_{g\in \Gamma^\mathrm{x}}\rho(\p{g}{e_\mathrm{x}}) a^\mathrm{x}_g=\sum_{g\in \Gamma^\mathrm{x}} a^\mathrm{x}_g=a^\mathrm{x}.
\end{align}

To complete the proof that $\varphi$ is a $\C{G}$-action, we only need to show that $\varphi(x): A^{s(x)}\rightarrow A^{t(x)}$ is an isomorphism of vector spaces and this follows from the previous calculation since
\begin{align}
\varphi(x^{-1})\circ \varphi(x)=\varphi(x^{-1}x)=\varphi(e_{s(x)})=id_{A^{s(x)}}
\end{align}
and
\begin{align}
\varphi(x)\circ \varphi(x^{-1})=\varphi(xx^{-1})=\varphi(e_{t(x)})=id_{A^{t(x)}}.
\end{align}

Lastly, if $a^{s(x)}_g\in A^{s(x)}_g$, then
\begin{align}
\varphi(x)a^{s(x)}_g&=\sum_{h\in \Gamma^{t(x)}}\rho(\p{h}{x})a^{s(x)}_g=\rho(\p{xgx^{-1}}{x})a^{s(x)}_g\in A^{t(x)}_{xgx^{-1}}
\end{align}
by part (iii) of Proposition \ref{DirectSum1}.
$~$
\end{proof}
\begin{notation}
For an object $(\rho,A)$ of $\mbox{Rep}(D(k[\C{G}]))$, we will often suppress the $D(k[\C{G}])$-action $\rho$ and simply write $A$ for $(\rho,A)$.  The action of $h\in D(k[\C{G}])$ on $a\in A$ will be denoted as $h\rhd a$ when $\rho$ is omitted, that is, $h\rhd a:=\rho(h)a$.   Furthermore, for $\mathrm{x}\in \C{G}_0$ and $g\in \Gamma^\mathrm{x}$, $A^\mathrm{x}$ will denote the direct summand of $A$ given by part (i) of Proposition \ref{DirectSum1}, and $A^\mathrm{x}_g$ will denote the direct summand of $A^\mathrm{x}$ given by part (ii) of Proposition \ref{DirectSum1}.
\end{notation}
\noindent We now look at the monoidal structure of $\mbox{Rep}(D(k[\C{G}]))$, which is given by the next two lemmas.
\begin{lemma}
\label{tensor_product2a}
If $A$ and $B$ are objects of $\mbox{Rep}(D(k[\C{G}]))$, then their monoidal product $($with $D(k[\C{G}])$-action induced by the coproduct $\Delta_D$ of $D(k[\C{G}])$$)$ is 
\begin{equation}
\label{tensor_product2}
A\widehat{\otimes} B=\bigoplus_{\mathrm{x}\in \mathcal{G}_0}A^{\mathrm{x}}\otimes_k B^{\mathrm{x}}.
\end{equation}
In addition, for $\mathrm{y}\in \C{G}_0$
\begin{equation}
\label{tensor_product3}
(A\widehat{\otimes} B)^\mathrm{y}=A^\mathrm{y}\otimes_k B^\mathrm{y}.
\end{equation}
\end{lemma}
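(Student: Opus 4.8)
The plan is to exhibit both decompositions as images of explicit idempotent endomorphisms and then read off their ranges. First I would recall from (\ref{tensor_product1}) that the monoidal product is $A\widehat{\otimes}B = \rho_{12}(1)(A\otimes_k B)$, where $\rho_{12} = (\rho_A\otimes\rho_B)\circ\Delta_D$ and $\Delta_D$ is the coproduct of $D(k[\C{G}])$. By (\ref{coproduct_unit2}) we have $\Delta_D(1) = \sum_{\mathrm{x}\in\C{G}_0} 1^\mathrm{x}\otimes 1^\mathrm{x}$, so $\rho_{12}(1) = \sum_{\mathrm{x}\in\C{G}_0} P_\mathrm{x}$ with $P_\mathrm{x} := \rho_A(1^\mathrm{x})\otimes\rho_B(1^\mathrm{x})$.

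Next I would check that the $P_\mathrm{x}$ are mutually orthogonal idempotents on $A\otimes_k B$: this is immediate from (\ref{rho1x}), which gives $\rho_A(1^\mathrm{x})\rho_A(1^\mathrm{y}) = \delta_{\mathrm{x},\mathrm{y}}\rho_A(1^\mathrm{x})$ and likewise for $B$, so $P_\mathrm{x} P_\mathrm{y} = \delta_{\mathrm{x},\mathrm{y}} P_\mathrm{x}$. Consequently $\mathrm{Im}\bigl(\rho_{12}(1)\bigr) = \bigoplus_{\mathrm{x}\in\C{G}_0}\mathrm{Im}(P_\mathrm{x})$. Since $\rho_A(1^\mathrm{x})$ is an idempotent on $A$ with image $A^\mathrm{x}$ and $\rho_B(1^\mathrm{x})$ is an idempotent on $B$ with image $B^\mathrm{x}$ (part (i) of Proposition \ref{DirectSum1}), the elementary fact that the tensor product of two idempotent endomorphisms has image equal to the tensor product of their images gives $\mathrm{Im}(P_\mathrm{x}) = A^\mathrm{x}\otimes_k B^\mathrm{x}$. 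This proves (\ref{tensor_product2}).

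For (\ref{tensor_product3}) I would run the same argument one level down. By definition $(A\widehat{\otimes}B)^\mathrm{y} = \rho_{12}(1^\mathrm{y})(A\widehat{\otimes}B)$, and since $\Delta_D(1^\mathrm{y}) = 1^\mathrm{y}\otimes 1^\mathrm{y}$ by (\ref{coproduct_unit1}) we get $\rho_{12}(1^\mathrm{y}) = P_\mathrm{y}$. Applying $P_\mathrm{y}$ to the decomposition $\bigoplus_{\mathrm{x}} A^\mathrm{x}\otimes_k B^\mathrm{x}$ just established, orthogonality of the $P_\mathrm{x}$ annihilates every summand except $\mathrm{x}=\mathrm{y}$, on which $P_\mathrm{y}$ acts as the identity; hence $(A\widehat{\otimes}B)^\mathrm{y} = A^\mathrm{y}\otimes_k B^\mathrm{y}$.

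I do not expect a genuine obstacle here: the whole lemma is bookkeeping with the orthogonal idempotents $1^\mathrm{x}$ together with the specific form of $\Delta_D(1)$. The one point worth spelling out carefully is the claim that for idempotents $P$ on $V$ and $Q$ on $W$ one has $\mathrm{Im}(P\otimes Q) = \mathrm{Im}(P)\otimes_k\mathrm{Im}(Q)$ inside $V\otimes_k W$; this follows by writing $V = \mathrm{Im}(P)\oplus\ker(P)$ and $W = \mathrm{Im}(Q)\oplus\ker(Q)$ and observing that $P\otimes Q$ is precisely the projection onto $\mathrm{Im}(P)\otimes_k\mathrm{Im}(Q)$ along the other three summands of the resulting fourfold decomposition of $V\otimes_k W$.
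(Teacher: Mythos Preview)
Your proposal is correct and follows essentially the same route as the paper: both arguments use $\Delta_D(1)=\sum_{\mathrm{x}}1^\mathrm{x}\otimes 1^\mathrm{x}$ from (\ref{coproduct_unit2}) together with $A^\mathrm{x}=\rho_A(1^\mathrm{x})A$, and then $\Delta_D(1^\mathrm{y})=1^\mathrm{y}\otimes 1^\mathrm{y}$ from (\ref{coproduct_unit1}) for the second part. The only difference is cosmetic---you spell out the orthogonal-idempotent bookkeeping and the identity $\mathrm{Im}(P\otimes Q)=\mathrm{Im}(P)\otimes_k\mathrm{Im}(Q)$ explicitly, whereas the paper absorbs this into the phrase ``follows from the fact that $A=\bigoplus_{\mathrm{x}}A^\mathrm{x}$ and $B=\bigoplus_{\mathrm{x}}B^\mathrm{x}$.''
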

\begin{proof}
By (\ref{tensor_product1}), $A\widehat{\otimes} B:=(1_{(1)}\rhd A)\otimes_k (1_{(2)}\rhd B)$.  It follows from (\ref{coproduct_unit2}) and part (i) of Proposition \ref{DirectSum1} that 
\begin{equation}
A\widehat{\otimes} B=\sum_{\mathrm{x}\in \C{G}_0} (1^\mathrm{x}\rhd A)\otimes_k (1^\mathrm{x}\rhd B)=\sum_{\mathrm{x}\in \C{G}_0} A^\mathrm{x}\otimes_k B^\mathrm{x}.
\end{equation}
(\ref{tensor_product2}) then follows from the fact that $A=\bigoplus_{\mathrm{x}\in\C{G}_0}A^\mathrm{x}$ and $B=\bigoplus_{\mathrm{x}\in \C{G}_0}B^\mathrm{x}$.

Lastly, note that since the $D(k[\C{G}])$-action on $A\widehat{\otimes} B$ is induced by the coproduct of $D(k[\C{G}])$ and $\Delta_D(1^\mathrm{y})=1^\mathrm{y}\otimes 1^\mathrm{y}$ by (\ref{coproduct_unit1}), (\ref{tensor_product3}) follows readily from part (i) of Proposition \ref{DirectSum1} which implies that the image of $A\widehat{\otimes} B$ under $1^\mathrm{y}\otimes 1^\mathrm{y}$ is $A^\mathrm{y}\otimes_k B^\mathrm{y}$. 
\end{proof}
\begin{lemma}
\label{UnitObject}
\begin{itemize}
\item[(i)] The unit object $D(k[\C{G}])_t$ of $\mbox{Rep}(D(k[\C{G}]))$ is defined as follows:
\begin{itemize}
\item[(a)] As a vector space over $k$, $D(k[\C{G}])_t$ has basis $\{1^\mathrm{x}\}_{\mathrm{x}\in \C{G}_0}$ where $1^\mathrm{x}\in D(k[\C{G}])$ is defined by (\ref{1x}). 
\item[(b)] The left $D(k[\C{G}])$-action on $D(k[\C{G}])_t$ is given by
\begin{equation}
\p{h}{y}\rhd 1^\mathrm{x}=\delta_{s(y),\mathrm{x}}~\delta_{h,yy^{-1}}~1^{s(h)}.
\end{equation}
\end{itemize}
\item[(ii)] $D(k[\C{G}])_t^\mathrm{x}=k1^\mathrm{x}$ for $\mathrm{x}\in \C{G}_0$.  In particular,
\begin{align}
\label{IdentityObj1}
D(k[\C{G}])_t\widehat{\otimes} A &= \bigoplus_{\mathrm{x}\in\C{G}_0} k1^\mathrm{x}\otimes_k A^\mathrm{x}\\
\label{IdentityObj2}
A\widehat{\otimes} D(k[\C{G}])_t&= \bigoplus_{\mathrm{x}\in\C{G}_0}A^\mathrm{x}\otimes_k k1^\mathrm{x}
\end{align}
for an object $A$ of $\mbox{Rep}(D(k[\C{G}]))$. 
\item[(iii)] If $l_A: D(k[\C{G}])_t\widehat{\otimes} A\rightarrow A$ and $r_A: A\widehat{\otimes} D(k[\C{G}])_t\rightarrow A$ are the left$\backslash$right identity maps of $\mbox{Rep}(D(k[\C{G}]))$ for an object $A$ of $\mbox{Rep}(D(k[\C{G}]))$, then
\begin{align}
\label{l_Aaction1}
l_A(1^\mathrm{x}\otimes a^\mathrm{x})&= a^\mathrm{x}\\
\label{r_Aaction1}
r_A(a^\mathrm{x}\otimes 1^\mathrm{x})&=a^\mathrm{x}
\end{align}
for $a^\mathrm{x}\in A^\mathrm{x}$.
\end{itemize}
\end{lemma}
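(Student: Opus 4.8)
The plan is to specialize the general recipe for the unit object and the left/right unit morphisms of $\mbox{Rep}(H)$ recorded in Section 2 to $H=D(k[\C{G}])$, using the explicit formulas for $\Delta_D$, $\varepsilon_D$, $S$ and the product law (\ref{DkG_product}).

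\emph{Part (i).} First I would evaluate the map $\varepsilon_t$ of (\ref{epsilon_t}) on a basis element $\p{g}{x}$. Using $\Delta_D(1)=\sum_{\mathrm{y}\in\C{G}_0}\sum_{h_1,h_2\in\Gamma^\mathrm{y}}\p{h_1}{e_\mathrm{y}}\otimes\p{h_2}{e_\mathrm{y}}$ from (\ref{coproduct_unit1})--(\ref{coproduct_unit2}), together with the computation $\p{h_1}{e_\mathrm{y}}\cdot\p{g}{x}=\delta_{h_1,g}\,\p{g}{x}$ from (\ref{DkG_product}) (where the surviving Kronecker delta already forces $\mathrm{y}=t(x)$) and $\varepsilon_D(\p{g}{x})=\delta_{g,xx^{-1}}$, one obtains $\varepsilon_t(\p{g}{x})=\delta_{g,xx^{-1}}\,1^{t(x)}$. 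Since $t\colon\C{G}_1\to\C{G}_0$ is surjective (every object carries an identity morphism) and the elements $1^\mathrm{x}$ have pairwise disjoint support in the basis of $D(k[\C{G}])$, it follows that $D(k[\C{G}])_t=\varepsilon_t(D(k[\C{G}]))$ has basis $\{1^\mathrm{x}\}_{\mathrm{x}\in\C{G}_0}$, which is (i)(a). For (i)(b) I would compute $\sigma_t(\p{h}{y})1^\mathrm{x}=\varepsilon_t(\p{h}{y}\cdot 1^\mathrm{x})$: expanding $1^\mathrm{x}=\sum_{g\in\Gamma^\mathrm{x}}\p{g}{e_\mathrm{x}}$ via (\ref{1x}) and applying (\ref{DkG_product}) gives $\p{h}{y}\cdot 1^\mathrm{x}=\delta_{s(y),\mathrm{x}}\,\p{h}{y}$ (the sum over $g$ selects the unique term $g=y^{-1}hy\in\Gamma^{s(y)}$), and then feeding this into the formula for $\varepsilon_t$ just found yields $\sigma_t(\p{h}{y})1^\mathrm{x}=\delta_{s(y),\mathrm{x}}\,\delta_{h,yy^{-1}}\,1^{t(y)}$; since $h=yy^{-1}$ forces $s(h)=t(y)$, this is exactly the asserted formula.

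\emph{Parts (ii) and (iii).} Setting $\p{h}{y}=\p{g}{e_\mathrm{x}}$ in the action of (i)(b) gives $\sigma_t(1^\mathrm{x})1^\mathrm{y}=\delta_{\mathrm{x},\mathrm{y}}\,1^\mathrm{x}$, whence $D(k[\C{G}])_t^\mathrm{x}:=\sigma_t(1^\mathrm{x})D(k[\C{G}])_t=k1^\mathrm{x}$; formulas (\ref{IdentityObj1}) and (\ref{IdentityObj2}) then follow at once from Lemma \ref{tensor_product2a}. For (iii), I would unwind the defining relations (\ref{l_Aaction}) and (\ref{r_Aaction}): using $\Delta_D(1)=\sum_\mathrm{y}1^\mathrm{y}\otimes 1^\mathrm{y}$ together with parts (i)--(ii) of Proposition \ref{DirectSum1} (so $\rho(1^\mathrm{y})a^\mathrm{x}=\delta_{\mathrm{x},\mathrm{y}}a^\mathrm{x}$) and part (ii) above, one checks that the choice $z=1^\mathrm{x}$, $a=a^\mathrm{x}$ realizes $1^\mathrm{x}\otimes a^\mathrm{x}$ (resp.\ $a^\mathrm{x}\otimes 1^\mathrm{x}$) in the form appearing on the left of (\ref{l_Aaction}) (resp.\ (\ref{r_Aaction})); hence $l_A(1^\mathrm{x}\otimes a^\mathrm{x})=\rho(1^\mathrm{x})a^\mathrm{x}=a^\mathrm{x}$ and $r_A(a^\mathrm{x}\otimes 1^\mathrm{x})=\rho(S(1^\mathrm{x}))a^\mathrm{x}$. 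The one extra ingredient is $S(1^\mathrm{x})=1^\mathrm{x}$, which follows from the antipode formula $S(\p{g}{x})=\p{x^{-1}g^{-1}x}{x^{-1}}$ by reindexing $g\mapsto g^{-1}$ over $\Gamma^\mathrm{x}$, and this gives $r_A(a^\mathrm{x}\otimes 1^\mathrm{x})=a^\mathrm{x}$.

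All the steps are elementary manipulations with the structure maps of $D(k[\C{G}])$; the only thing requiring genuine care is keeping track of the source/target constraints of the groupoid, so that each product $\p{g}{x}\cdot\p{h}{y}$ is formed only when defined and the various Kronecker deltas are bundled correctly. I therefore expect the bookkeeping of these constraints --- rather than any conceptual point --- to be the main (and rather mild) obstacle.
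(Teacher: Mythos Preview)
Your proposal is correct and follows essentially the same route as the paper's proof: both compute $\varepsilon_t$ explicitly using $\Delta_D(1)=\sum_\mathrm{y}1^\mathrm{y}\otimes 1^\mathrm{y}$ and the product law, identify the image and the action on $1^\mathrm{x}$, deduce $D(k[\C{G}])_t^\mathrm{x}=k1^\mathrm{x}$ and invoke Lemma \ref{tensor_product2a}, and then realize $1^\mathrm{x}\otimes a^\mathrm{x}$ in the form required by (\ref{l_Aaction})/(\ref{r_Aaction}) together with $S(1^\mathrm{x})=1^\mathrm{x}$. The only cosmetic difference is that you evaluate $\varepsilon_t$ directly on each basis element $\p{g}{x}$, whereas the paper first shows the image lies in $\mathrm{span}\{1^\mathrm{x}\}$ and then checks $\varepsilon_t(1^\mathrm{z})=1^\mathrm{z}$; both yield the same conclusion with the same ingredients.
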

\begin{proof}
Let $\varepsilon_D$ and $\Delta_D$ denote the counit and coproduct of $D(k[\C{G}])$ respectively.  By definition, $D(k[\C{G}])_t$ is the image of $\varepsilon_{Dt}: D(k[\C{G}])\rightarrow D(k[\C{G}])$ where $\varepsilon_{Dt}$  is the map given by (\ref{epsilon_t}).  Since 
\begin{equation}
\Delta_D(1)=1_{(1)}\otimes 1_{(2)}=\sum_{\mathrm{x}\in \C{G}_0} 1^\mathrm{x}\otimes 1^\mathrm{x}
\end{equation}
by (\ref{coproduct_unit2}), we have
\begin{align}
\nonumber
\varepsilon_{Dt}(h)&=\varepsilon_D(1_{(1)}\cdot h)1_{(2)}\\
&=\sum_{\mathrm{x}\in \C{G}_0} \varepsilon_D(1^\mathrm{x}\cdot h)1^\mathrm{x}
\end{align}
for $h\in D(k[\C{G}])$.  Hence, the image of $\varepsilon_t$ is contained in the subspace spanned by $\{1^\mathrm{x}\}_{ \mathrm{x}\in \C{G}_0}$.  Since 
\begin{align}
\varepsilon_D(1^\mathrm{x}\cdot 1^\mathrm{z})=\delta_{\mathrm{x},\mathrm{z}}~\varepsilon_D(1^\mathrm{x})
=\delta_{\mathrm{x},\mathrm{z}}~\sum_{g\in \Gamma^\mathrm{x}}\varepsilon_D(\p{g}{e_\mathrm{x}})
=\delta_{\mathrm{x},\mathrm{z}}~\varepsilon_D(\p{e_\mathrm{x}}{e_\mathrm{x}})
=\delta_{\mathrm{x},\mathrm{z}},
\end{align}
we have $\varepsilon_{Dt}(1^\mathrm{z})=1^\mathrm{z}$.  This shows that $D(k[\C{G}])_t$ is precisely the space spanned by $\{1^\mathrm{x}\}_{\mathrm{x}\in \C{G}_0}$.  It follows from (\ref{1x}) that the latter is also linearly independent and this completes the proof of (i-a). 

For (i-b), we have
\begin{align}
\nonumber
\p{h}{y}\rhd 1^\mathrm{x}&:=\varepsilon_{Dt}(\p{h}{y}\cdot 1^\mathrm{x})\\
\nonumber
&=\delta_{s(y),\mathrm{x}}~\varepsilon_{Dt}(\p{h}{y})\\
\nonumber
&=\delta_{s(y),\mathrm{x}}~\sum_{\mathrm{z}\in \C{G}_0} \varepsilon_D(1^\mathrm{z}\cdot \p{h}{y})1^\mathrm{z}\\
\nonumber
&=\delta_{s(y),\mathrm{x}}~\varepsilon_D(\p{h}{y})1^{s(h)}\\
\nonumber
&=\delta_{s(y),\mathrm{x}}~\delta_{h,yy^{-1}}1^{s(h)}.
\end{align}

For (ii), note that if $1^\mathrm{x}\in D(k[\C{G}])$ and $1^\mathrm{y}\in D(k[\C{G}])_t$, then by (i-b), we have
\begin{equation}
\label{DkGtAction2}
1^\mathrm{x}\rhd 1^\mathrm{y}=\sum_{g\in \Gamma^\mathrm{x}} \p{g}{e_\mathrm{x}}\rhd 1^\mathrm{y}=\sum_{g\in \Gamma^\mathrm{x}}\delta_{\mathrm{x},\mathrm{y}}~\delta_{g,e_\mathrm{x}}1^{s(g)}=\delta_{\mathrm{x},\mathrm{y}}1^\mathrm{x}.
\end{equation}
Hence, it follows from this and (i-a) that 
\begin{equation}
D(k[\C{G}])_t^\mathrm{x}:=1^\mathrm{x}\rhd D(k[\C{G}])_t=k1^\mathrm{x}.
\end{equation}
(\ref{IdentityObj1}) and (\ref{IdentityObj2})  then follow from Lemma \ref{tensor_product2a}.

For (iii), we have
\begin{align}
\label{l_Aaction2}
l_A(1^\mathrm{x}\otimes a^\mathrm{x})&=\sum_{\mathrm{y}\in \C{G}_0} l_A(1^\mathrm{y}\rhd 1^\mathrm{x}\otimes 1^\mathrm{y}\rhd a^\mathrm{x})\\
\label{l_Aaction3}
&=l_A(1_{(1)}\rhd 1^\mathrm{x}\otimes 1_{(2)}\rhd a^\mathrm{x})
\end{align}
and 
\begin{align}
\label{r_Aaction2}
r_A(a^\mathrm{x}\otimes 1^\mathrm{x})&=\sum_{\mathrm{y}\in \C{G}_0} r_A(1^\mathrm{y}\rhd a^\mathrm{x}\otimes 1^\mathrm{y}\rhd 1^\mathrm{x})\\
\label{r_Aaction3}
&=r_A(1_{(1)}\rhd a^\mathrm{x}\otimes 1_{(2)}\rhd 1^\mathrm{x})
\end{align}
where (\ref{l_Aaction2}) and (\ref{r_Aaction2}) follow from (\ref{projection1}) and (\ref{DkGtAction2}), and (\ref{l_Aaction3}) and (\ref{r_Aaction3}) follow from (\ref{coproduct_unit2}).  By (\ref{l_Aaction}) and (\ref{r_Aaction}), we have
\begin{equation}
l_A(1_{(1)}\rhd 1^\mathrm{x}\otimes 1_{(2)}\rhd a^\mathrm{x})=1^\mathrm{x}\rhd a^\mathrm{x}=a^\mathrm{x}
\end{equation}
and 
\begin{equation}
r_A(1_{(1)}\rhd a^\mathrm{x}\otimes 1_{(2)}\rhd 1^\mathrm{x})=S(1^\mathrm{x})\rhd a^\mathrm{x}=1^\mathrm{x}\rhd a^\mathrm{x}=a^\mathrm{x}
\end{equation}
which proves (iii).
 \end{proof}
 \noindent We now show that a commutative algebra object in $\mbox{Rep}(D(k[\C{G}]))$ has the necessary structure to encode axioms (i), (ii), and (vii) of Definition \ref{DefGrpdFA}. 
  \begin{proposition}
 \label{ProductResult1}
Suppose $(A,m,\mu)$ is an algebra object in $\mbox{Rep}(D(k[\C{G}]))$ and
\begin{equation}
\label{product_def}
\m{a^\mathrm{x}}{b^\mathrm{y}}:=\left\{\begin{array}{cl} m(a^\mathrm{x}\otimes b^\mathrm{y})&\hspace*{0.1in}\mbox{if}\hspace*{0.1in}\mathrm{x}=\mathrm{y}\\
0 &  \hspace*{0.1in}\mbox{if}\hspace*{0.1in}\mathrm{x}\neq\mathrm{y}
\end{array}\right.
\end{equation}
for $a^\mathrm{x}\in A^\mathrm{x}$, $b^\mathrm{y}\in A^\mathrm{y}$ and 
\begin{align}
\label{product_def1}
\m{a}{b}:=\sum_{\mathrm{x},\mathrm{y}\in \C{G}_0} \m{a^\mathrm{x}}{b^\mathrm{y}}=\sum_{\mathrm{x}\in \C{G}_0} \m{a^\mathrm{x}}{b^\mathrm{x}}
\end{align}
for $a=\sum_{\mathrm{x}\in \C{G}_0}a^\mathrm{x}$, $b=\sum_{\mathrm{x}\in \C{G}_0}b^\mathrm{x}$ where $a^\mathrm{x}$, $b^\mathrm{x}\in A^\mathrm{x}$ for all $\mathrm{x}\in \C{G}_0$. Then
\begin{itemize}
\item[(i)] for $a^\mathrm{x}_g\in A^\mathrm{x}_g$ and $b^\mathrm{x}_h\in A^\mathrm{x}_h$, 
\begin{equation}
\label{GalgebraProduct}
\m{a^\mathrm{x}_g}{b^\mathrm{x}_h}\in A^\mathrm{x}_{gh}.
\end{equation}
In particular, $\m{a^\mathrm{x}}{b^\mathrm{y}}=\delta_{\mathrm{x},\mathrm{y}}~\m{a^\mathrm{x}}{b^\mathrm{y}}\in A^\mathrm{x}$;\vspace*{0.05in}
\item[(ii)]  $A$ is a unital associative algebra over $k$ with multiplication $\m{}{}$  and unit \\$\textbf{1}_A:=\mu(1)=\sum_{\mathrm{x}\in \C{G}_0}\mu(1^\mathrm{x})$;\vspace*{0.05in}
\item[(iii)]  if $(A,m,\mu)$ is also commutative, then 
\begin{equation}
\m{a^\mathrm{x}_g}{b^\mathrm{x}_h}=\m{(\varphi(g)b^\mathrm{x}_h)}{a^\mathrm{x}_g}
\end{equation}
where $\varphi$ is the $\C{G}$-action given by Corollary \ref{GroupoidActionCor}.
\end{itemize}
 \end{proposition}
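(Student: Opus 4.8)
The plan is to deduce each of (i)--(iii) from the corresponding axiom for the algebra object $(A,m,\mu)$ (Definition \ref{AlgebraObjectDef}), reading off the structure of $\mbox{Rep}(D(k[\C{G}]))$ from the preceding results: the decompositions of Proposition \ref{DirectSum1}, the monoidal product of Lemma \ref{tensor_product2a}, the unit object and its identity maps from Lemma \ref{UnitObject}, the $\C{G}$-action $\varphi$ from Corollary \ref{GroupoidActionCor}, and the braiding (\ref{BraidingDef}). The running technical point is that $m$ is defined only on $A\widehat{\otimes}A=\bigoplus_{\mathrm{x}\in\C{G}_0}A^\mathrm{x}\otimes_k A^\mathrm{x}$, so at each step one must check that whatever is being fed to $m$ (and likewise to $l_A$, $r_A$, $c_{A,A}$) actually sits in one of the summands $A^\mathrm{x}\otimes_k A^\mathrm{x}$; once this grading is tracked, everything reduces to the quoted axioms.

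For (i) the key observation is that $\Delta_D$ is ``grade additive'' on the elements $\p{p}{e_\mathrm{x}}$. Given $a^\mathrm{x}_g\in A^\mathrm{x}_g$ and $b^\mathrm{x}_h\in A^\mathrm{x}_h$, Lemma \ref{tensor_product2a} places $a^\mathrm{x}_g\otimes b^\mathrm{x}_h$ in $(A\widehat{\otimes}A)^\mathrm{x}$, so $m(a^\mathrm{x}_g\otimes b^\mathrm{x}_h)\in A^\mathrm{x}$. For any $p\in\Gamma^\mathrm{x}$, combining the coproduct formula $\Delta_D(\p{p}{e_\mathrm{x}})=\sum_{p_1p_2=p}\p{p_1}{e_\mathrm{x}}\otimes\p{p_2}{e_\mathrm{x}}$ with part (ii) of Proposition \ref{DirectSum1} yields $\p{p}{e_\mathrm{x}}\rhd(a^\mathrm{x}_g\otimes b^\mathrm{x}_h)=\delta_{p,gh}\,(a^\mathrm{x}_g\otimes b^\mathrm{x}_h)$; since $m$ is $D(k[\C{G}])$-linear, $\p{p}{e_\mathrm{x}}\rhd m(a^\mathrm{x}_g\otimes b^\mathrm{x}_h)=\delta_{p,gh}\,m(a^\mathrm{x}_g\otimes b^\mathrm{x}_h)$, and setting $p=gh$ shows $m(a^\mathrm{x}_g\otimes b^\mathrm{x}_h)\in\p{gh}{e_\mathrm{x}}\rhd A=A^\mathrm{x}_{gh}$. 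The ``in particular'' clause is then immediate from the definition of $\bullet$ together with $A^\mathrm{x}=\bigoplus_{g\in\Gamma^\mathrm{x}}A^\mathrm{x}_g$.

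For (ii) I would test associativity on homogeneous elements $a^\mathrm{x}\in A^\mathrm{x}$, $b^\mathrm{y}\in A^\mathrm{y}$, $c^\mathrm{z}\in A^\mathrm{z}$. If $\mathrm{x},\mathrm{y},\mathrm{z}$ are not all equal, then using part (i) and the definition of $\bullet$ one checks that $\m{(\m{a^\mathrm{x}}{b^\mathrm{y}})}{c^\mathrm{z}}$ and $\m{a^\mathrm{x}}{(\m{b^\mathrm{y}}{c^\mathrm{z}})}$ both vanish (whichever pair of indices disagrees annihilates one of the two expressions); if $\mathrm{x}=\mathrm{y}=\mathrm{z}$, both expressions equal $m\circ(m\otimes id_A)$ resp.\ $m\circ(id_A\otimes m)$ applied to $a^\mathrm{x}\otimes b^\mathrm{x}\otimes c^\mathrm{x}$, an element of the relevant iterated hatted tensor product, hence agree by the associativity axiom (the associator being trivial). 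For the unit, first observe that $\mu(1^\mathrm{x})\in A^\mathrm{x}$, since $\mu$ is $D(k[\C{G}])$-linear and $1^\mathrm{x}$ lies in the $\mathrm{x}$-component $D(k[\C{G}])_t^\mathrm{x}=k1^\mathrm{x}$ of the unit object (Lemma \ref{UnitObject}(ii)); consequently $\m{\textbf{1}_A}{a}=\sum_{\mathrm{x}\in\C{G}_0}m(\mu(1^\mathrm{x})\otimes a^\mathrm{x})$, and the unit axiom $m\circ(\mu\otimes id_A)=l_A$ with $l_A(1^\mathrm{x}\otimes a^\mathrm{x})=a^\mathrm{x}$ from Lemma \ref{UnitObject}(iii) gives $m(\mu(1^\mathrm{x})\otimes a^\mathrm{x})=a^\mathrm{x}$, whence $\m{\textbf{1}_A}{a}=a$. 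The identity $\m{a}{\textbf{1}_A}=a$ follows symmetrically from $m\circ(id_A\otimes\mu)=r_A$ and $r_A(a^\mathrm{x}\otimes 1^\mathrm{x})=a^\mathrm{x}$.

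For (iii) the substantive computation is unwinding the braiding. Using (\ref{RmatrixDkG}), (\ref{Rx}) and (\ref{BraidingDef}) one obtains $c_{A,A}(a^\mathrm{x}_g\otimes b^\mathrm{x}_h)=\sum_{\mathrm{z}\in\C{G}_0}\sum_{p,q\in\Gamma^\mathrm{z}}(\p{q}{p}\rhd b^\mathrm{x}_h)\otimes(\p{p}{e_\mathrm{z}}\rhd a^\mathrm{x}_g)$, and part (ii) of Proposition \ref{DirectSum1} forces $\mathrm{z}=\mathrm{x}$ and $p=g$, collapsing the double sum to $\sum_{q\in\Gamma^\mathrm{x}}(\p{q}{g}\rhd b^\mathrm{x}_h)\otimes a^\mathrm{x}_g$. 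By (\ref{Gaction}) the first tensor factor is precisely $\varphi(g)b^\mathrm{x}_h$, so $c_{A,A}(a^\mathrm{x}_g\otimes b^\mathrm{x}_h)=(\varphi(g)b^\mathrm{x}_h)\otimes a^\mathrm{x}_g$, which by the last assertion of Corollary \ref{GroupoidActionCor} again lies in $A^\mathrm{x}\otimes_k A^\mathrm{x}$. Commutativity of $(A,m,\mu)$ means $m\circ c_{A,A}=m$; applying $m$ to both sides of this last equality and rewriting through the definition of $\bullet$ yields $\m{a^\mathrm{x}_g}{b^\mathrm{x}_h}=\m{(\varphi(g)b^\mathrm{x}_h)}{a^\mathrm{x}_g}$. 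The only genuine obstacle anywhere is the component-bookkeeping flagged at the outset; with the $\mathrm{x}$-grading of Lemma \ref{tensor_product2a} in hand, each of the three assertions falls out of the algebra-object axioms and the explicit structure of $D(k[\C{G}])$.
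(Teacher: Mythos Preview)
Your proposal is correct and follows essentially the same route as the paper: for (i) you use $D(k[\C{G}])$-linearity of $m$ together with the coproduct formula for $\p{gh}{e_\mathrm{x}}$ and Proposition~\ref{DirectSum1}(ii); for (ii) you reduce associativity to the algebra-object axiom on the diagonal summands and derive the unit property from $m\circ(\mu\otimes id_A)=l_A$, $m\circ(id_A\otimes\mu)=r_A$ combined with Lemma~\ref{UnitObject}(iii); and for (iii) you unwind the braiding via the explicit R-matrix and collapse the sum using Proposition~\ref{DirectSum1}. The only cosmetic difference is that in (iii) the paper first isolates the single surviving term $\p{ghg^{-1}}{g}\rhd b^\mathrm{x}_h$ via Proposition~\ref{DirectSum1}(iii-a) and then identifies it with $\varphi(g)b^\mathrm{x}_h$, whereas you keep the sum $\sum_{q\in\Gamma^\mathrm{x}}\p{q}{g}\rhd b^\mathrm{x}_h$ and recognize it directly as $\varphi(g)b^\mathrm{x}_h$ from (\ref{Gaction}); these are the same computation.
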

\begin{proof}
For (\ref{GalgebraProduct}), we have
\begin{align}
\m{a^\mathrm{x}_g}{b^\mathrm{x}_h}&=m(a^\mathrm{x}_g\otimes b^\mathrm{x}_h)\\
\label{GalgebraProduct1}
&=m(\p{g}{e_\mathrm{x}}\rhd a^\mathrm{x}_g\otimes \p{h}{e_\mathrm{x}}\rhd b^\mathrm{x}_h)\\
\label{GalgebraProduct2}
&=m\left(\sum_{\{g_1,h_1\in \Gamma^\mathrm{x}~|~g_1h_1=gh\}}\p{g_1}{e_\mathrm{x}}\rhd a^\mathrm{x}_g\otimes \p{h_1}{e_\mathrm{x}}\rhd b^\mathrm{x}_h\right)\\
\label{GalgebraProduct3}
&=\p{gh}{e_\mathrm{x}}\rhd m(a^\mathrm{x}_g\otimes b^\mathrm{x}_h)\\
\label{GalgebraProduct4}
&=\p{gh}{e_\mathrm{x}}\rhd (\m{a^\mathrm{x}_g}{b^\mathrm{x}_h})\in A^\mathrm{x}_{gh}
\end{align}
where (\ref{GalgebraProduct1}), (\ref{GalgebraProduct2}), and (\ref{GalgebraProduct4}) follow from statement (ii) of Proposition \ref{DirectSum1} and (\ref{GalgebraProduct3}) follows from the fact that 
\begin{itemize}
 \item[(1)] $m$ is a $D(k[\C{G}])$-linear map from $A\widehat{\otimes}A$ to $A$ where $A\widehat{\otimes}A=\bigoplus_{\mathrm{y}\in \C{G}_0}A^\mathrm{y}\otimes_k A^\mathrm{y}$ by Lemma \ref{tensor_product2a};
 \item[(2)] the $D(k[\C{G}])$-action on $A\widehat{\otimes}A$ is induced by the coproduct of $D(k[\C{G}])$; and
 \item[(3)] $\Delta(\p{gh}{e_\mathrm{x}})=\displaystyle\sum_{\{g_1,h_1\in \Gamma^\mathrm{x}~|~g_1h_1=gh\}}\p{g_1}{e_\mathrm{x}}\otimes  \p{h_1}{e_\mathrm{x}}$.
 \end{itemize}
 Since $A^\mathrm{x}=\bigoplus_{g\in \Gamma^\mathrm{x}} A^\mathrm{x}_g$, it follows readily from $(\ref{GalgebraProduct})$ (and the definition of the product) that $\m{a^\mathrm{x}}{b^\mathrm{y}}=\delta_{\mathrm{x},\mathrm{y}}~\m{a^\mathrm{x}}{b^\mathrm{y}}\in A^\mathrm{x}$ for $a^\mathrm{x}\in A^\mathrm{x}$, $b^\mathrm{y}\in A^\mathrm{y}$.  This completes the proof of (i).
 
For (ii), note that since $m$ is also $k$-linear, it follows that $\m{(\lambda_1 a)}{(\lambda_2 b)}= (\lambda_1\lambda_2)(\m{a}{b})$ for $\lambda_1,\lambda_2\in k$ and $a,b\in A$.   To show that $\textbf{1}_A$ is the unit element of $A$, we use the fact that $m\circ (\mu\otimes id_A)=l_A$ and $m\circ (id_A\otimes \mu)=r_A$ (where $l_A$ and $r_A$ denotes the left and right identity maps of $A$).  The latter shows that 
 \begin{align}
 \label{LeftMultiplication}
 m(\mu(1^\mathrm{x})\otimes a^\mathrm{x})&=l_A(1^\mathrm{x}\otimes a^\mathrm{x})=a^\mathrm{x}\\
 \label{RightMultiplication}
 m(a^\mathrm{x}\otimes \mu(1^\mathrm{x}))&=r_A(a^\mathrm{x}\otimes 1^\mathrm{x})=a^\mathrm{x}
 \end{align}
 for $a^\mathrm{x}\in A^\mathrm{x}$, where part (iii) of Lemma \ref{UnitObject} has been applied in (\ref{LeftMultiplication}) and (\ref{RightMultiplication}).    Now for $a\in A$, $a$ can be uniquely written as $\sum_{\mathrm{x}\in \C{G}_0}a^\mathrm{x}$ where $a^\mathrm{x}\in A^\mathrm{x}$.   By (\ref{LeftMultiplication}) and (\ref{RightMultiplication}), we have the following:
 \begin{align}
 \m{\textbf{1}_A}{a}&=\sum_{\mathrm{x}\in \C{G}_0}\m{\mu(1^\mathrm{x})}{a^\mathrm{x}}=\sum_{\mathrm{x}\in \C{G}_0}m(\mu(1^\mathrm{x})\otimes a^\mathrm{x})=\sum_{\mathrm{x}\in \C{G}_0}a^\mathrm{x}=a\\
 \m{a}{\textbf{1}_A}&=\sum_{\mathrm{x}\in \C{G}_0}\m{a^\mathrm{x}}{\mu(1^\mathrm{x})}=\sum_{\mathrm{x}\in \C{G}_0}m(a^\mathrm{x}\otimes \mu(1^\mathrm{x}))=\sum_{\mathrm{x}\in \C{G}_0}a^\mathrm{x}=a.
 \end{align}
 
 For associativity, it suffices to check that
\begin{equation}
\label{assoc1}
\m{(\m{a^\mathrm{x}}{b^\mathrm{y}})}{c^\mathrm{z}}=\m{a^\mathrm{x}}{(\m{b^\mathrm{y}}{c^\mathrm{z}})}
\end{equation}
for $a^\mathrm{x}\in A^\mathrm{x}$, $b^\mathrm{y}\in A^\mathrm{y}$, and $c^\mathrm{z}\in A^\mathrm{z}$.  From the definition of the product, its easy to see that both sides of (\ref{assoc1}) are zero when $\mathrm{x}\neq \mathrm{y}$ or $\mathrm{y}\neq \mathrm{z}$.  For the case when $\mathrm{x}=\mathrm{y}=\mathrm{z}$, we have
\begin{equation}
\label{assoc2}
\m{(\m{a^\mathrm{x}}{b^\mathrm{x}})}{c^\mathrm{x}}=m(m(a^\mathrm{x}\otimes b^\mathrm{x})\otimes c^\mathrm{x})
\end{equation}
and 
\begin{equation}
\label{assoc3}
\m{a^\mathrm{x}}{(\m{b^\mathrm{x}}{c^\mathrm{x}})}=m(a^\mathrm{x}\otimes m(b^\mathrm{x}\otimes c^\mathrm{x})).
\end{equation}
Since  
\begin{equation}
m\circ (m\otimes id_A)=m\circ(id_A\otimes m)
\end{equation}
we see that (\ref{assoc2}) and (\ref{assoc3}) are indeed equal.  In addition, it follows readily from (\ref{product_def}) and (\ref{product_def1}) that 
\begin{align}
\m{a}{(b+c)}&=\m{a}{b}+\m{a}{c}\\
\m{(b+c)}{a}&=\m{b}{a}+\m{c}{a}
\end{align}
for $a,b,c\in A$.  This completes the proof of (ii).  

Lastly for (iii), note that if $(A,m,\mu)$ is also commutative, we have
\begin{align}
\m{a^\mathrm{x}_g}{b^\mathrm{x}_h}&=m(a^\mathrm{x}_g\otimes b^\mathrm{x}_h)\\
&=m\circ c_{A,A}(a^\mathrm{x}_g\otimes b^\mathrm{x}_h)\\
&=m \left(\sum_{\mathrm{y}\in \C{G}_0} \sum_{l,m\in \Gamma^\mathrm{y}} (\p{m}{l}\rhd b^\mathrm{x}_h)\otimes (\p{l}{e_\mathrm{y}}\rhd a^\mathrm{x}_g)\right)\\
&=m\left((\p{ghg^{-1}}{g}\rhd b^\mathrm{x}_h)\otimes (\p{g}{e_\mathrm{x}}\rhd a^\mathrm{x}_g)\right)\\
&=m\left((\p{ghg^{-1}}{g}\rhd b^\mathrm{x}_h)\otimes a^\mathrm{x}_g\right)\\
&=\m{(\p{ghg^{-1}}{g}\rhd b^\mathrm{x}_{h})}{a^\mathrm{x}_g}
\end{align}
where the second equality follows from the fact that $m=m\circ c_{A,A}$ (since $(A,m,\mu)$ is a \textit{commutative} algebra object); the third equality follows from the definition of the braiding morphism $c$, which is given by (\ref{BraidingDef}), and the definition of the R-matrix of $D(k[\C{G}])$, which is given by (\ref{RmatrixDkG}) and (\ref{Rx}); and the fourth and fifth equalities follow from parts (ii) and (iii) of Proposition \ref{DirectSum1}.  Since
\begin{equation}
\p{ghg^{-1}}{g}\rhd b^\mathrm{x}_h =\sum_{l\in \Gamma^\mathrm{x}} \p{l}{g}\rhd b^\mathrm{x}_h
\end{equation}
(by (iii-a) of Proposition \ref{DirectSum1}) and the right side is just $\varphi(g)b^\mathrm{x}_h$, we have 
\begin{equation}
\m{a^\mathrm{x}_g}{b^\mathrm{x}_h}=\m{(\varphi(g)b^\mathrm{x}_h)}{a^\mathrm{x}_g}
\end{equation}
and this completes the proof of Proposition \ref{ProductResult1}.
\end{proof}
\noindent We now show how any left $D(k[\C{G}])$-module $(\rho,A)$ which is both an algebra and colagebra object gives rise to a bilinear form which satisfies all the axioms of a $\C{G}$-FA except possibly axiom (vi) of Definition \ref{DefGrpdFA}.  We will see later in Proposition \ref{etaNonDegen} that to ensure that the induced bilinear form is nondegenerate (i.e., satisfies axiom (vi) of Definition \ref{DefGrpdFA}), the algebra and coalgebra structure on $(\rho,A)$ must satisfy the Frobenius relations (equations (\ref{FrobeniusAxiom1}) and (\ref{FrobeniusAxiom2})).  In other words, $(\rho,A)$ must also be a Frobenius object.   We begin by examining the properties of a coalgebra object in $\mbox{Rep}(D(k[\C{G}]))$.
\begin{lemma}
\label{CounitRepDkG}
Let $(A,\Delta,\varepsilon)$ be a coalgebra object in $\mbox{Rep}(D(k[\C{G}]))$.  Then \\$\varepsilon: A\rightarrow D(k[\C{G}])_t$ and $\Delta: A\rightarrow A\widehat{\otimes} A$ satisfy the following:
\begin{itemize}
\item[(i)] $\varepsilon(a^\mathrm{x}_g)=\delta_{g,e_\mathrm{x}}\varepsilon(a^\mathrm{x}_g)\in k1^\mathrm{x}\subset D(k[\C{G}])_t$, and
\item[(ii)] $\Delta(a^\mathrm{x}_g)\in\displaystyle \bigoplus_{\{g_1,g_2\in \Gamma^\mathrm{x}~|~g_1g_2=g\}}A^\mathrm{x}_{g_1}\otimes_k A^\mathrm{x}_{g_2}$
\end{itemize}
for $a^\mathrm{x}_g\in A^\mathrm{x}_g$. 
\end{lemma}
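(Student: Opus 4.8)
The plan is to prove both statements by the same device: $\varepsilon$ and $\Delta$ are morphisms in $\mbox{Rep}(D(k[\C{G}]))$, hence $D(k[\C{G}])$-linear, while every $a^\mathrm{x}_g\in A^\mathrm{x}_g$ is fixed by the idempotent $\p{g}{e_\mathrm{x}}$, i.e.\ $\p{g}{e_\mathrm{x}}\rhd a^\mathrm{x}_g=a^\mathrm{x}_g$ by part (ii) of Proposition \ref{DirectSum1}. Consequently $\varepsilon(a^\mathrm{x}_g)=\p{g}{e_\mathrm{x}}\rhd\varepsilon(a^\mathrm{x}_g)$ and $\Delta(a^\mathrm{x}_g)=\p{g}{e_\mathrm{x}}\rhd\Delta(a^\mathrm{x}_g)$, so the whole argument reduces to identifying the image of the operator $\p{g}{e_\mathrm{x}}\rhd(-)$ on the target objects $D(k[\C{G}])_t$ and $A\widehat{\otimes}A$.

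For (i), I would expand $\varepsilon(a^\mathrm{x}_g)=\sum_{\mathrm{y}\in\C{G}_0}c_\mathrm{y}1^\mathrm{y}$ in the basis of $D(k[\C{G}])_t$ from Lemma \ref{UnitObject}(i-a), and then apply the action formula of Lemma \ref{UnitObject}(i-b) with acting element $\p{g}{e_\mathrm{x}}$: since $s(e_\mathrm{x})=\mathrm{x}$, $e_\mathrm{x}e_\mathrm{x}^{-1}=e_\mathrm{x}$, and $s(g)=\mathrm{x}$, this gives $\p{g}{e_\mathrm{x}}\rhd 1^\mathrm{y}=\delta_{\mathrm{x},\mathrm{y}}\,\delta_{g,e_\mathrm{x}}\,1^\mathrm{x}$. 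Hence $\varepsilon(a^\mathrm{x}_g)=\p{g}{e_\mathrm{x}}\rhd\varepsilon(a^\mathrm{x}_g)=c_\mathrm{x}\,\delta_{g,e_\mathrm{x}}\,1^\mathrm{x}$, which is exactly (i): the value vanishes unless $g=e_\mathrm{x}$, and in that case lies in $k1^\mathrm{x}$.

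For (ii), I would use Lemma \ref{tensor_product2a} to write $A\widehat{\otimes}A=\bigoplus_{\mathrm{y}\in\C{G}_0}\bigoplus_{p,q\in\Gamma^\mathrm{y}}A^\mathrm{y}_p\otimes_k A^\mathrm{y}_q$, with the $D(k[\C{G}])$-action induced by $\Delta_D$. The key computation is that for $u\in A^\mathrm{y}_p$ and $v\in A^\mathrm{y}_q$,
\[
\p{g}{e_\mathrm{x}}\rhd(u\otimes v)=\sum_{\{g_1,g_2\in\Gamma^\mathrm{x}\,|\,g_1g_2=g\}}(\p{g_1}{e_\mathrm{x}}\rhd u)\otimes(\p{g_2}{e_\mathrm{x}}\rhd v)=\delta_{\mathrm{x},\mathrm{y}}\,\delta_{pq,g}\,(u\otimes v),
\]
where the last step uses part (ii) of Proposition \ref{DirectSum1} together with the fact that $\p{g_i}{e_\mathrm{x}}$ annihilates everything outside $A^\mathrm{x}$. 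Thus $\p{g}{e_\mathrm{x}}\rhd(-)$ is precisely the projection onto $\bigoplus_{\{g_1,g_2\in\Gamma^\mathrm{x}\,|\,g_1g_2=g\}}A^\mathrm{x}_{g_1}\otimes_k A^\mathrm{x}_{g_2}$, and since $\Delta(a^\mathrm{x}_g)$ is fixed by this operator, it lies in that summand.

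This is the coalgebra-object dual of Proposition \ref{ProductResult1}, so I do not anticipate a genuine obstacle; the only point requiring care is the bookkeeping with the extra object-label $\mathrm{y}$ when expanding elements of $A\widehat{\otimes}A$ and of $D(k[\C{G}])_t$, and confirming that $\p{g}{e_\mathrm{x}}\rhd(-)$ really acts as a projection (not merely a map into the claimed summand) so that the fixed-point argument actually closes.
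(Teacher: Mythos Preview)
Your proposal is correct and follows essentially the same approach as the paper: both parts use $D(k[\C{G}])$-linearity of $\varepsilon$ and $\Delta$ together with $\p{g}{e_\mathrm{x}}\rhd a^\mathrm{x}_g=a^\mathrm{x}_g$, then compute the action of $\p{g}{e_\mathrm{x}}$ on the target via Lemma~\ref{UnitObject}(i-b) for (i) and via the coproduct $\Delta_D(\p{g}{e_\mathrm{x}})$ for (ii). The only cosmetic difference is that the paper writes (ii) directly in Sweedler notation for $\Delta(a^\mathrm{x}_g)$, whereas you phrase it as identifying $\p{g}{e_\mathrm{x}}\rhd(-)$ with the projection onto the desired summand; these are the same computation.
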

\begin{proof}
By part (i-a) of Lemma \ref{UnitObject}, $\varepsilon(a^\mathrm{x}_g)$ can be written as
\begin{equation}
\varepsilon(a^\mathrm{x}_g)= \sum_{\mathrm{y}\in \C{G}_0} \lambda^\mathrm{y} 1^\mathrm{y},~~\lambda^\mathrm{y}\in k.
\end{equation}
Then 
\begin{align}
\varepsilon(a^\mathrm{x}_g)&=\varepsilon(\p{g}{e^\mathrm{x}}\rhd a^\mathrm{x}_g)\\
&=\p{g}{e_\mathrm{x}}\rhd\varepsilon(a^\mathrm{x}_g)\\
&=\sum_{\mathrm{y}\in \C{G}_0} \lambda^\mathrm{y}~\p{g}{e_\mathrm{x}}\rhd 1^\mathrm{y}\\
&=\sum_{\mathrm{y}\in \C{G}_0} \delta_{\mathrm{x},\mathrm{y}}\delta_{g,e_\mathrm{x}} \lambda^\mathrm{y}~1^\mathrm{y}\\
&=\delta_{g,e_\mathrm{x}} \lambda^\mathrm{x}1^\mathrm{x}
\end{align}
which proves part (i) of Lemma \ref{CounitRepDkG}.  In the above calculation, the fourth equality follows from part (i-b) of Lemma \ref{UnitObject}.

For part (ii), we have
\begin{align}
\Delta(a^\mathrm{x}_g)&=\Delta(\p{g}{e_\mathrm{x}}\rhd a^\mathrm{x}_g)\\
\label{secondline}
&=\sum_{g_1g_2=g}\left(\p{g_1}{e_\mathrm{x}}\rhd (a^\mathrm{x}_g)_{(1)}\right)\otimes \left(\p{g_2}{e_\mathrm{x}}\rhd (a^\mathrm{x}_g)_{(2)}\right)\in \bigoplus_{g_1g_2=g}A^\mathrm{x}_{g_1}\otimes_k A^\mathrm{x}_{g_2}
\end{align}
where the above calculation follows from part (ii) of Proposition \ref{DirectSum1} and the fact that $\p{g}{e_\mathrm{x}}$ acts on $\Delta(a^\mathrm{x}_g)=(a^\mathrm{x}_g)_{(1)}\otimes (a^\mathrm{x}_g)_{(2)}$ via the coproduct of $D(k[\C{G}])$.  (Note that the sum and direct sum in (\ref{secondline}) are over all $g_1,g_2\in \Gamma^\mathrm{x}$ such that $g_1g_2=g$.)
\end{proof}
%\noindent The next proposition shows how a left $D(k[\C{G}])$-module which is both an algebra and colagebra object induces a bilinear form which satisfies all the axioms of a $\C{G}$-FA except possibly axiom (vi) of Definition \ref{DefGrpdFA}.  
\begin{proposition}
\label{etaResult1}
Suppose $(A,m,\mu)$ and $(A,\Delta,\varepsilon)$ are respectively algebra and colagebra objects in $Rep(D(k[\C{G}]))$.  Let $\varepsilon': A\rightarrow k$ be the $k$-linear map defined by\footnote{Note that by part (i) of Lemma \ref{CounitRepDkG} the coefficient of $1^\mathrm{x}$ in (\ref{counitprime}) depends only upon $a^\mathrm{x}$, the $\mathrm{x}$-component of $a$.}
\begin{equation}
\label{counitprime}
\varepsilon(a)=\sum_{\mathrm{x}\in \C{G}_0} \varepsilon'(a^\mathrm{x})~1^\mathrm{x}
\end{equation}
for $a=\sum_{\mathrm{x}\in \C{G}_0}a^\mathrm{x}$ with $a^\mathrm{x}\in A^\mathrm{x}$ and let $\eta: A\times A\rightarrow k$ be the map defined by
\begin{equation}
\label{eta_def1}
\eta(a,b)=\varepsilon'(\m{a}{b})
\end{equation}
where $\m{a}{b}$ is the product given in Proposition \ref{ProductResult1}.  Then 
\begin{itemize}
\item[(i)] $\eta$ is a $k$-bilinear map;
\item[(ii)] $\eta(\m{a}{b},c)=\eta(a,\m{b}{c})$ for $a,b,c\in A$
\item[(iii)] $\eta(a^\mathrm{x}_g,b^\mathrm{x}_h)=0$ for $gh\neq e_\mathrm{x}$ where $a^\mathrm{x}_g\in A^\mathrm{x}_g$ and $b^\mathrm{x}_h\in A^\mathrm{x}_h$; and
\item[(iv)] $\eta(a^\mathrm{x},b^\mathrm{x})=\eta(\varphi(x)a^\mathrm{x},\varphi(x)b^\mathrm{x})$ where $\varphi$ is the $\C{G}$-action defined in Corollary \ref{GroupoidActionCor}, $a^\mathrm{x},b^\mathrm{x}\in A^\mathrm{x}$, and $x\in \C{G}_1$ with $s(x)=\mathrm{x}$.
\end{itemize}
\end{proposition}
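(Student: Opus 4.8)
The plan is to reduce all four claims to the $D(k[\C{G}])$-linearity of $m$ and $\varepsilon$ together with the structural results already established. Parts (i) and (ii) are formal. Since $\varepsilon$ is $k$-linear, the map $\varepsilon'$ read off from it via (\ref{counitprime}) is $k$-linear, and by Proposition \ref{ProductResult1}(ii) the product $\m{}{}$ is $k$-bilinear; hence $\eta=\varepsilon'\circ\m{}{}$ (cf. (\ref{eta_def1})) is $k$-bilinear, which gives (i). For (ii), associativity of $\m{}{}$ (Proposition \ref{ProductResult1}(ii)) yields $\eta(\m{a}{b},c)=\varepsilon'(\m{(\m{a}{b})}{c})=\varepsilon'(\m{a}{(\m{b}{c})})=\eta(a,\m{b}{c})$.

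For (iii), I would first note that for $a^\mathrm{x}\in A^\mathrm{x}$ one has $\varepsilon(a^\mathrm{x})=\varepsilon'(a^\mathrm{x})1^\mathrm{x}$, so $\varepsilon'(a^\mathrm{x})$ is the coefficient of $1^\mathrm{x}$ in $\varepsilon(a^\mathrm{x})$; by Lemma \ref{CounitRepDkG}(i) this coefficient vanishes unless $a^\mathrm{x}$ lies in $A^\mathrm{x}_{e_\mathrm{x}}$. Since $\m{a^\mathrm{x}_g}{b^\mathrm{x}_h}\in A^\mathrm{x}_{gh}$ by Proposition \ref{ProductResult1}(i), applying $\varepsilon'$ to it gives zero whenever $gh\neq e_\mathrm{x}$, which is (iii).

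Part (iv) is the crux and I would handle it in two steps. Step one: $\varphi(x)$ is multiplicative, i.e. $\m{\varphi(x)a^\mathrm{x}}{\varphi(x)b^\mathrm{x}}=\varphi(x)(\m{a^\mathrm{x}}{b^\mathrm{x}})$ for $a^\mathrm{x},b^\mathrm{x}\in A^\mathrm{x}$ with $\mathrm{x}=s(x)$ (both sides lie in $A^{t(x)}$, so $\m{}{}$ is there just $m$). It suffices to check this on homogeneous pieces $a^\mathrm{x}_g,b^\mathrm{x}_h$. By (\ref{Gaction}) and Proposition \ref{DirectSum1}(iii) the restriction of $\varphi(x)$ to $A^\mathrm{x}_{gh}$ is $\rho(\p{x(gh)x^{-1}}{x})$, and $D(k[\C{G}])$-linearity of $m$ with $\Delta_D(\p{k}{x})=\sum_{\{k_1,k_2\mid k_1k_2=k\}}\p{k_1}{x}\otimes\p{k_2}{x}$ gives $\varphi(x)(\m{a^\mathrm{x}_g}{b^\mathrm{x}_h})=\sum_{k_1k_2=x(gh)x^{-1}}m(\rho(\p{k_1}{x})a^\mathrm{x}_g\otimes\rho(\p{k_2}{x})b^\mathrm{x}_h)$; by Proposition \ref{DirectSum1}(iii) only the term with $k_1=xgx^{-1}$, $k_2=xhx^{-1}$ is nonzero, and it equals $\m{\varphi(x)a^\mathrm{x}_g}{\varphi(x)b^\mathrm{x}_h}$, so summing over $g,h$ proves multiplicativity. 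Step two: $\varepsilon'\circ\varphi(x)=\varepsilon'$ on $A^\mathrm{x}$. By Lemma \ref{CounitRepDkG}(i) both sides depend only on the $A^\mathrm{x}_{e_\mathrm{x}}$-component of the argument, and on that component $\varphi(x)$ acts as $\rho(\p{e_{t(x)}}{x})$ (by (\ref{Gaction}) and Proposition \ref{DirectSum1}(iii), since $xe_\mathrm{x}x^{-1}=e_{t(x)}$); then $D(k[\C{G}])$-linearity of $\varepsilon$ together with Lemma \ref{UnitObject}(i-b) gives $\varepsilon(\rho(\p{e_{t(x)}}{x})c^\mathrm{x}_{e_\mathrm{x}})=\p{e_{t(x)}}{x}\rhd\varepsilon(c^\mathrm{x}_{e_\mathrm{x}})=\varepsilon'(c^\mathrm{x}_{e_\mathrm{x}})(\p{e_{t(x)}}{x}\rhd1^\mathrm{x})=\varepsilon'(c^\mathrm{x}_{e_\mathrm{x}})1^{t(x)}$, using $xx^{-1}=e_{t(x)}$ and $s(x)=\mathrm{x}$ to evaluate the action on $1^\mathrm{x}$; hence the relevant coefficient is unchanged. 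Combining the two steps, $\eta(\varphi(x)a^\mathrm{x},\varphi(x)b^\mathrm{x})=\varepsilon'(\varphi(x)(\m{a^\mathrm{x}}{b^\mathrm{x}}))=\varepsilon'(\m{a^\mathrm{x}}{b^\mathrm{x}})=\eta(a^\mathrm{x},b^\mathrm{x})$, which is (iv). The main obstacle is step one of (iv): one must carefully track which $\Gamma^\mathrm{x}$-grading each element occupies and which source and target objects the morphisms carry, so that exactly one term of $\Delta_D(\p{x(gh)x^{-1}}{x})$ survives; the rest is routine bookkeeping with the definitions.
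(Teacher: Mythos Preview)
Your proof is correct. Parts (i)--(iii) coincide with the paper's argument essentially verbatim. For (iv) the paper proceeds a bit differently: it first reduces to homogeneous pieces $a^\mathrm{x}_g,b^\mathrm{x}_h$, disposes of the case $gh\neq e_\mathrm{x}$ by invoking (iii), and then in the case $h=g^{-1}$ performs a single direct computation that unwinds $\eta(\varphi(x)a^\mathrm{x}_g,\varphi(x)b^\mathrm{x}_{g^{-1}})$ using $D(k[\C{G}])$-linearity of $m$ and of $\varepsilon$ in one pass. Your two-step organization---first showing $\varphi(x)$ is multiplicative, then showing $\varepsilon'\circ\varphi(x)=\varepsilon'$---uses the same ingredients but packages them as standalone facts. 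This is arguably tidier, and it pays a small dividend: the multiplicativity of $\varphi(x)$ is exactly what the paper later reproves in Proposition~\ref{GFO2GFA} when verifying axiom~(d) of Definition~\ref{DefGrpdFA}, so isolating it here would let that later passage simply cite it. The paper's version, on the other hand, avoids introducing intermediate lemmas and keeps the argument self-contained within the proposition.
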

\begin{proof}
(i) follows easily from the definition of $\eta$ and (ii) follows from the fact that $\m{\m{(a}{b)}}{c}=\m{a}{\m{(b}{c)}}$ by Proposition \ref{ProductResult1}.  

For (iii), note that since $\m{a^\mathrm{x}_g}{b^\mathrm{x}_h}\in A^\mathrm{x}_{gh}$ (by Proposition \ref{ProductResult1}), we have 
\begin{equation}
\label{counitprime2}
\varepsilon(\m{a^\mathrm{x}_g}{b^\mathrm{x}_h})=\varepsilon'(\m{a^\mathrm{x}_g}{b^\mathrm{x}_h})1^\mathrm{x}.
\end{equation}
With $\m{a^\mathrm{x}_g}{b^\mathrm{x}_h}\in A^\mathrm{x}_{gh}$, it follows from part (i) of Lemma \ref{CounitRepDkG} that $\varepsilon(\m{a^\mathrm{x}_g}{b^\mathrm{x}_h})=0$ for $gh\neq e_\mathrm{x}$.  By  (\ref{counitprime2}), $\varepsilon'(\m{a^\mathrm{x}_g}{b^\mathrm{x}_h})$ is also zero for $gh\neq e_\mathrm{x}$ and this proves (iii).

For (iv), it suffices to consider the case when $a^\mathrm{x}=a^\mathrm{x}_g\in A^\mathrm{x}_g$ and $b^\mathrm{x}=b^\mathrm{x}_h\in A^\mathrm{x}_h$.   By Corollary \ref{GroupoidActionCor}, $\varphi(x)a^\mathrm{x}_g\in A^{\mathrm{y}}_{xgx^{-1}}$ and $\varphi(x)b^\mathrm{x}_h\in A^{\mathrm{y}}_{xhx^{-1}}$ where we have set $\mathrm{y}=t(x)$.   If $gh\neq e_\mathrm{x}$, then 
\begin{align}
\eta(a^\mathrm{x}_g,b^\mathrm{x}_h)=\eta(\varphi(x)a^\mathrm{x}_g,\varphi(x)b^\mathrm{x}_h)=0
\end{align}
by part (iii) of Proposition \ref{etaResult1}.

For the case when $gh=e_\mathrm{x}$, we have 
\begin{align}
\nonumber
\eta(\varphi(x)a^\mathrm{x}_g,\varphi(x)b^\mathrm{x}_{g^{-1}})&=\varepsilon'\left(\m{(\varphi(x)a^\mathrm{x}_g)}{(\varphi(x)b^\mathrm{x}_{g^{-1}})}\right)\\
\nonumber
&=\varepsilon'\left(\m{(\p{xgx^{-1}}{x}\rhd a^\mathrm{x}_g)}{(\p{xg^{-1}x^{-1}}{x}\rhd b^\mathrm{x}_{g^{-1}})}\right)\\
\nonumber
&=\varepsilon'\left(m((\p{xgx^{-1}}{x}\rhd a^\mathrm{x}_g)\otimes (\p{xg^{-1}x^{-1}}{x}\rhd b^\mathrm{x}_{g^{-1}}))\right)\\
\nonumber
&=\varepsilon'\left(m\left(\sum_{uv=e_\mathrm{y},~u,v\in \Gamma^\mathrm{y}}(\p{u}{x}\rhd a^\mathrm{x}_g)\otimes (\p{v}{x}\rhd b^\mathrm{x}_{g^{-1}})\right)\right)\\
\nonumber
&=\varepsilon'\left(\p{e_\mathrm{y}}{x}\rhd m(a^\mathrm{x}_g\otimes b^\mathrm{x}_{g^{-1}})\right)\\
\label{counitprime3}
&=\varepsilon'\left(\p{e_\mathrm{y}}{x}\rhd(\m{a^\mathrm{x}_g}{b^\mathrm{x}_{g^{-1}}})\right)
\end{align}
where the second equality follows from the definition of $\varphi$ and part (iii-a) of Proposition \ref{DirectSum1}; the fourth equality also follows from part (iii-a) of Proposition \ref{DirectSum1}; and the fifth equality follows from the fact that $m$ is $D(k[\C{G}])$-linear.   In addition, 
\begin{align}
\varepsilon\left(\p{e_\mathrm{y}}{x}\rhd(\m{a^\mathrm{x}_g}{b^\mathrm{x}_{g^{-1}}})\right)&=\p{e_\mathrm{y}}{x}\rhd\varepsilon\left(\m{a^\mathrm{x}_g}{b^\mathrm{x}_{g^{-1}}}\right)\\
&=\varepsilon'\left(\m{a^\mathrm{x}_g}{b^\mathrm{x}_{g^{-1}}}\right)~\p{e_\mathrm{y}}{x}\rhd1^\mathrm{x}\\
&=\varepsilon'\left(\m{a^\mathrm{x}_g}{b^\mathrm{x}_{g^{-1}}}\right)~1^\mathrm{y}
\end{align}
where the third equality follows from part (i-b) of Lemma \ref{UnitObject}.  

Since $\p{e_\mathrm{y}}{x}\rhd(\m{a^\mathrm{x}_g}{b^\mathrm{x}_{g^{-1}}})\in A^\mathrm{y}_{e_\mathrm{y}}$, we also have
\begin{equation}
\varepsilon\left(\p{e_\mathrm{y}}{x}\rhd(\m{a^\mathrm{x}_g}{b^\mathrm{x}_{g^{-1}}})\right)=\varepsilon'\left(\p{e_\mathrm{y}}{x}\rhd(\m{a^\mathrm{x}_g}{b^\mathrm{x}_{g^{-1}}})\right)~1^\mathrm{y}.
\end{equation}
Hence, 
\begin{equation}
\varepsilon'\left(\p{e_\mathrm{y}}{x}\rhd(\m{a^\mathrm{x}_g}{b^\mathrm{x}_{g^{-1}}})\right)
=\varepsilon'\left(\m{a^\mathrm{x}_g}{b^\mathrm{x}_{g^{-1}}}\right).
\end{equation}
It follows from this as well as the definition of $\eta$ and (\ref{counitprime3}) that
\begin{equation}
\eta(\varphi(x)a^\mathrm{x}_g,\varphi(x)b^\mathrm{x}_{g^{-1}})=\eta(a^\mathrm{x}_g,b^\mathrm{x}_{g^{-1}})
\end{equation}
and this completes the proof of (iv).
\end{proof}
\begin{proposition}
\label{etaNonDegen}
Let $(A,m,\triangle,\mu,\varepsilon)$ be a Frobenius object in $\mbox{Rep}(D(k[\C{G}]))$ and let $\eta: A\times A\rightarrow k$ be the bilinear form given by Proposition \ref{etaResult1}.  Then $\eta|_{A^\mathrm{x}_g\times A^\mathrm{x}_h}$ is nondegenerate for all $\mathrm{x}\in \C{G}_0$ and $g,h\in \Gamma^\mathrm{x}$ satisfying $gh=e_\mathrm{x}$.
\end{proposition}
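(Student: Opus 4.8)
The plan is to reconstruct, for each object $\mathrm{x}$ of $\C{G}$, an explicit inverse to the pairing $\eta|_{A^\mathrm{x}\times A^\mathrm{x}}$ out of the Frobenius coproduct, and then transfer the resulting nondegeneracy down to the $\Gamma^\mathrm{x}$-graded pieces using the orthogonality of Proposition \ref{etaResult1}(iii). Set $\textbf{1}^\mathrm{x}:=\mu(1^\mathrm{x})$. Since $1^\mathrm{x}\rhd 1^\mathrm{x}=1^\mathrm{x}$ in $D(k[\C{G}])_t$ (equation \ref{DkGtAction2}) and $\mu,\Delta$ are $D(k[\C{G}])$-linear, one gets $\textbf{1}^\mathrm{x}\in A^\mathrm{x}$ and $\Delta(\textbf{1}^\mathrm{x})\in (A\widehat{\otimes}A)^\mathrm{x}=A^\mathrm{x}\otimes_k A^\mathrm{x}$ by Lemma \ref{tensor_product2a}; write $\Delta(\textbf{1}^\mathrm{x})=\sum_i u_i\otimes v_i$ with $u_i,v_i\in A^\mathrm{x}$. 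Recall from equations \ref{LeftMultiplication}--\ref{RightMultiplication} that $m(\textbf{1}^\mathrm{x}\otimes a^\mathrm{x})=m(a^\mathrm{x}\otimes\textbf{1}^\mathrm{x})=a^\mathrm{x}$ for $a^\mathrm{x}\in A^\mathrm{x}$.

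Next I would evaluate the two Frobenius axioms \ref{FrobeniusAxiom1} and \ref{FrobeniusAxiom2} (with the trivial associator suppressed) on the vectors $a^\mathrm{x}\otimes\textbf{1}^\mathrm{x}$ and $\textbf{1}^\mathrm{x}\otimes a^\mathrm{x}$ of $A^\mathrm{x}\otimes_k A^\mathrm{x}$ respectively. Using the unit relations just recalled, this yields, for every $a^\mathrm{x}\in A^\mathrm{x}$,
\begin{align}
\nonumber
\Delta(a^\mathrm{x})=\sum_i m(a^\mathrm{x}\otimes u_i)\otimes v_i=\sum_i u_i\otimes m(v_i\otimes a^\mathrm{x}).
\end{align}
I would then apply the two counit identities of Definition \ref{CoalgebraObjectDef} to $a^\mathrm{x}$, invoking the explicit unit morphisms of Lemma \ref{UnitObject}(iii), the identity $\varepsilon(c^\mathrm{x})=\varepsilon'(c^\mathrm{x})\,1^\mathrm{x}$ for $c^\mathrm{x}\in A^\mathrm{x}$ (Lemma \ref{CounitRepDkG}(i) together with \ref{counitprime}), the fact that $m(a^\mathrm{x}\otimes u_i),m(v_i\otimes a^\mathrm{x})\in A^\mathrm{x}$ (Proposition \ref{ProductResult1}(i)), and the definition $\eta(a,b)=\varepsilon'(m(a\otimes b))$. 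This produces the reconstruction formulas
\begin{align}
\nonumber
a^\mathrm{x}=\sum_i \eta(a^\mathrm{x},u_i)\,v_i=\sum_i \eta(v_i,a^\mathrm{x})\,u_i.
\end{align}

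The conclusion is then immediate. If $a^\mathrm{x}\in A^\mathrm{x}$ satisfies $\eta(a^\mathrm{x},-)\equiv 0$ on $A^\mathrm{x}$, then in particular $\eta(a^\mathrm{x},u_i)=0$ for all $i$ (as $u_i\in A^\mathrm{x}$), so the first formula forces $a^\mathrm{x}=0$; the second formula handles $\eta(-,a^\mathrm{x})\equiv 0$ in the same way. Hence $\eta|_{A^\mathrm{x}\times A^\mathrm{x}}$ is a perfect pairing on the finite-dimensional space $A^\mathrm{x}$. Finally, fix $g,h\in\Gamma^\mathrm{x}$ with $gh=e_\mathrm{x}$, i.e.\ $h=g^{-1}$. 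If $a\in A^\mathrm{x}_g$ pairs to zero against all of $A^\mathrm{x}_{g^{-1}}$, then for an arbitrary $c=\sum_{k\in\Gamma^\mathrm{x}}c_k\in A^\mathrm{x}=\bigoplus_{k\in\Gamma^\mathrm{x}}A^\mathrm{x}_k$ we have $\eta(a,c)=\sum_k\eta(a,c_k)=\eta(a,c_{g^{-1}})=0$ by Proposition \ref{etaResult1}(iii), so $a=0$ by the nondegeneracy just established; the symmetric argument handles the second variable. Thus $\eta|_{A^\mathrm{x}_g\times A^\mathrm{x}_h}$ is nondegenerate, which is the assertion.

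I do not anticipate a serious obstacle: the argument is the classical fact that a Frobenius algebra's trace form is nondegenerate, transported into the graded categorical setting. The only genuinely non-formal step is the last one, deducing nondegeneracy of the $(g,g^{-1})$-block from nondegeneracy on all of $A^\mathrm{x}$, and it depends essentially on the orthogonality relation of Proposition \ref{etaResult1}(iii); the rest is bookkeeping---keeping the trivial associator and the unit morphisms $l_A,r_A$ straight, and checking that $\Delta(\textbf{1}^\mathrm{x})$ and the relevant products remain inside the single summand $A^\mathrm{x}$ so that $\varepsilon$ collapses to $\varepsilon'$.
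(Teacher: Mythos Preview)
Your argument is correct and follows essentially the same route as the paper: both expand $\Delta(\mu(1^\mathrm{x}))=\sum_i u_i\otimes v_i$ and combine the Frobenius relations with the counit property to obtain the reconstruction identities $a^\mathrm{x}=\sum_i\eta(a^\mathrm{x},u_i)\,v_i=\sum_i\eta(v_i,a^\mathrm{x})\,u_i$. The only organizational difference is that the paper chooses the $v_i$ to be a $\Gamma^\mathrm{x}$-homogeneous basis from the outset (using Lemma~\ref{CounitRepDkG}(ii) to place each $u_i$ in the inverse graded piece) and reads off the block nondegeneracy directly from the dual-basis relation $\eta(v_j,u_i)=\delta_{ji}$, whereas you first establish nondegeneracy on all of $A^\mathrm{x}$ and then specialize via the orthogonality of Proposition~\ref{etaResult1}(iii).
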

\begin{proof}
To start, set
\begin{equation}
1^\mathrm{x}_A:=\mu(1^\mathrm{x})\in A^\mathrm{x}.
\end{equation}
Then from the proof of Proposition \ref{ProductResult1}, we have  
\begin{equation}
\m{1^\mathrm{x}_A}{a^\mathrm{x}}=\m{a^\mathrm{x}}{1^\mathrm{x}_A}=a^\mathrm{x}
\end{equation}
for $a^\mathrm{x}\in A^\mathrm{x}$.   Furthermore, (\ref{GalgebraProduct}) of Proposition \ref{ProductResult1} implies that $1^\mathrm{x}_A\in A^\mathrm{x}_{e_\mathrm{x}}$.  By (ii) of Lemma \ref{CounitRepDkG},
\begin{equation}
\Delta(1^\mathrm{x}_A)\in \bigoplus_{h\in \Gamma^\mathrm{x}} A^\mathrm{x}_h\otimes_k A^\mathrm{x}_{h^{-1}}.
\end{equation}
Now let $v_1,\dots,v_n$ be a basis for $A^\mathrm{x}$ with $v_i\in A^{\mathrm{x}}_{h_i}$ for some $h_i\in \Gamma^\mathrm{x}$.  Then 
\begin{equation}
\Delta(1^\mathrm{x}_A) = \sum_{i=1}^n u_i\otimes v_i
\end{equation}   
for some $u_i\in A^\mathrm{x}_{h_i^{-1}}$.  

If $a^\mathrm{x}\in A^\mathrm{x}$, then  
\begin{align}
a^\mathrm{x}&=l_A\circ (\varepsilon\otimes id_A)\circ \Delta(a^\mathrm{x})\\
&= l_A\circ (\varepsilon\otimes id_A)\circ \Delta(\m{a^\mathrm{x}}{1^\mathrm{x}_A})\\
&= l_A\circ (\varepsilon\otimes id_A)\circ \Delta\circ m(a^\mathrm{x}\otimes 1^\mathrm{x}_A)\\
&=l_A\circ (\varepsilon\otimes id_A)\circ (m\otimes id_A)\circ (id_A\otimes \Delta)(a^\mathrm{x}\otimes 1^\mathrm{x}_A)\\
&=\sum_{i=1}^n \varepsilon'(\m{a^\mathrm{x}}{u_i})~v_i,
\end{align}
where the first equality is just the counit property of a coalgebra object; the fourth equality follows from (\ref{FrobeniusAxiom1}); and the fifth equality employs the linear map $\varepsilon':A\rightarrow k$ that was defined in Proposition \ref{etaResult1}.   Setting $a^\mathrm{x}=v_j$ and using the fact that the $v_i$'s are linearly independent gives
\begin{equation}
\label{vuResult}
\eta(v_j,u_i):= \varepsilon'(\m{v_j}{u_i})=\delta_{ji}.
\end{equation}
Using (\ref{FrobeniusAxiom2}), a similar calculation shows that
\begin{equation}
\label{uLinDep}
a^\mathrm{x}=\sum_{i=1}^n\varepsilon'(\m{v_i}{a^\mathrm{x}})~u_i.
\end{equation}
Since $a^\mathrm{x}$ is arbitrary and the dimension of $A^\mathrm{x}$ is $n$, (\ref{uLinDep}) shows that $\{u_i\}_{i=1}^n$ is also a basis of $A^\mathrm{x}$. 

 (\ref{vuResult}) combined with the fact that $\{v_i\}_{i=1}^n$ and $\{u_i\}_{i=1}^n$ are both bases of $A^\mathrm{x}$ (where $v_i\in A^\mathrm{x}_{h_i}$ and $u_i\in A^\mathrm{x}_{h_i^{-1}}$) shows that $\eta|_{A^\mathrm{x}_g\times A^\mathrm{x}_h}$ is nondegenerate for $gh=e_\mathrm{x}$.  
\end{proof}
\noindent  The next result establishes the second half of Theorem \ref{MainTheorem}.
\begin{proposition}
\label{GFO2GFA}
If $((A,\rho),m,\Delta,\mu,\varepsilon)$ is a Frobenius object in $\mbox{Rep}(D(k[\C{G}]))$ which satisfies conditions (1) and (2) of Theorem \ref{MainTheorem}, then
\begin{equation}
<\C{G},(A,\m{}{},\textbf{1}_A),\eta,\varphi>
\end{equation}
is a $\C{G}$-FA where 
\begin{itemize}
\item[(i)] $\m{}{}$ and $\textbf{1}_A$ are respectively the product and multiplicative unit given by Proposition \ref{ProductResult1};
\item[(ii)] $\eta$ is the bilinear form given by Proposition \ref{etaResult1}; and
\item[(iii)] $\varphi$ is the $\C{G}$-action given by Corollary \ref{GroupoidActionCor}.
\end{itemize}
\end{proposition}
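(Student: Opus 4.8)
The plan is to go through Definition \ref{DefGrpdFA} item by item, supplying each piece of data and checking each axiom by invoking the results already established in this section, with the hypotheses (1) and (2) of Theorem \ref{MainTheorem} entering only in axioms (viii) and (ix).

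First I would assemble the data. Item (a) is the given finite groupoid $\C{G}$. For (b), Proposition \ref{ProductResult1}(ii) shows that $(A,\m{}{},\textbf{1}_A)$ is a finite dimensional unital associative $k$-algebra, while Proposition \ref{DirectSum1}(i) together with Proposition \ref{ProductResult1}(i) gives $A=\bigoplus_{\mathrm{x}\in\C{G}_0}A^\mathrm{x}$ with each $A^\mathrm{x}$ closed under the product; since $\textbf{1}_A=\sum_{\mathrm{x}}\mu(1^\mathrm{x})$ with $\mu(1^\mathrm{x})\in A^\mathrm{x}$ a two-sided identity for $A^\mathrm{x}$ (by the unit computation in the proof of Proposition \ref{ProductResult1}), this is a direct sum of algebras. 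Item (c) is the bilinear form $\eta$ of Proposition \ref{etaResult1}(i). For (d), Corollary \ref{GroupoidActionCor} already produces a $\C{G}$-action $\varphi$ by vector space isomorphisms $\varphi(x)\colon A^{s(x)}\to A^{t(x)}$, so what remains is to check that $\varphi(x)$ is an algebra homomorphism. For this I would set $w_x:=\sum_{g\in\Gamma^{t(x)}}\p{g}{x}\in D(k[\C{G}])$, note $\varphi(x)=\rho(w_x)|_{A^{s(x)}}$, and observe from the coproduct formula for $D(k[\C{G}])$ that $\Delta_D(w_x)=w_x\otimes w_x$; then the $D(k[\C{G}])$-linearity of $m$, whose source $A\widehat{\otimes}A$ carries the $\Delta_D$-induced action, yields $m\big((\varphi(x)a)\otimes(\varphi(x)b)\big)=\rho(w_x)\,m(a\otimes b)=\varphi(x)\,m(a\otimes b)$ for $a,b\in A^{s(x)}$, i.e. $\m{(\varphi(x)a)}{(\varphi(x)b)}=\varphi(x)(\m{a}{b})$; unit preservation $\varphi(x)\mu(1^\mathrm{x})=\mu(1^{t(x)})$ follows from Lemma \ref{UnitObject}(i-b).

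Next I would dispatch axioms (i)--(vii), each of which is now a direct citation: (i) is Proposition \ref{DirectSum1}(ii) together with $(\ref{GalgebraProduct})$; (ii) is Proposition \ref{ProductResult1}(i); (iii) is Proposition \ref{etaResult1}(ii); (iv) is Proposition \ref{etaResult1}(iv); (v) is the last assertion of Corollary \ref{GroupoidActionCor}; (vi) combines Proposition \ref{etaNonDegen} with Proposition \ref{etaResult1}(iii); and (vii) is Proposition \ref{ProductResult1}(iii) together with $(\ref{GalgebraProduct})$, using that a Frobenius object is in particular a commutative algebra object. For (viii), I would note from Corollary \ref{GroupoidActionCor} (via Proposition \ref{DirectSum1}(iii-a)) that on $A^\mathrm{x}_g$ only the $\p{g}{g}$-term survives, so $\varphi(g)|_{A^\mathrm{x}_g}=\rho(\p{g}{g})|_{A^\mathrm{x}_g}$, and that by $(\ref{projection3})$ each $\rho(\p{h}{h})$ factors through $A^\mathrm{x}_h$ (where $h\in\Gamma^\mathrm{x}$) and hence vanishes on every other summand; therefore $\sum_{\mathrm{x}}\sum_{g\in\Gamma^\mathrm{x}}\rho(\p{g}{g})$ restricted to $A^\mathrm{x}_g$ equals $\rho(\p{g}{g})|_{A^\mathrm{x}_g}$, and hypothesis (1) forces $\varphi(g)|_{A^\mathrm{x}_g}=id_{A^\mathrm{x}_g}$. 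For (ix), I would first do the grading bookkeeping: using Proposition \ref{ProductResult1}(i) and Corollary \ref{GroupoidActionCor}, for $c\in A^\mathrm{x}_{ghg^{-1}h^{-1}}$ the composite $l_c\circ\varphi(h)$ restricts to an endomorphism of $A^\mathrm{x}_g$ and $\varphi(g^{-1})\circ l_c$ restricts to an endomorphism of $A^\mathrm{x}_h$, so both traces make sense; then, again via Proposition \ref{DirectSum1}(iii-a), $\rho(\p{hgh^{-1}}{h})$ agrees with $\varphi(h)$ on $A^\mathrm{x}_g$ and is zero on every other summand, so the trace over $A$ of $l_c\circ\rho(\p{hgh^{-1}}{h})$ equals $\mbox{Tr}(l_c\circ\varphi(h)|_{A^\mathrm{x}_g})$, while $\rho(\p{h}{e_\mathrm{x}})$ is the projector onto $A^\mathrm{x}_h$ and $\rho(\p{h}{g^{-1}})$ agrees with $\varphi(g^{-1})$ on $A^\mathrm{x}_{ghg^{-1}}$, so the trace over $A$ of $\rho(\p{h}{g^{-1}})\circ l_c\circ\rho(\p{h}{e_\mathrm{x}})$ equals $\mbox{Tr}(\varphi(g^{-1})\circ l_c|_{A^\mathrm{x}_h})$; since $\rho(\p{ghg^{-1}h^{-1}}{e_\mathrm{x}})A=A^\mathrm{x}_{ghg^{-1}h^{-1}}$, hypothesis (2) is exactly axiom (ix).

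I expect the main obstacle to be largely organizational: keeping the index conventions straight in (ix), i.e. checking that the relevant products and conjugates in $\Gamma^\mathrm{x}$ really do return each time to the grading component one started in, and confirming that the traces taken over all of $A$ in conditions (1) and (2) collapse onto the single relevant summand. The one genuinely computational (though very short) point beyond citation is the multiplicativity of $\varphi(x)$ in item (d), which is handled by the ``grouplike'' identity $\Delta_D(w_x)=w_x\otimes w_x$ and the $D(k[\C{G}])$-linearity of $m$.
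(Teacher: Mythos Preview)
Your proposal is correct and follows essentially the same route as the paper's proof: both arguments reduce each axiom of Definition~\ref{DefGrpdFA} to a citation of the preceding structural results, with conditions (1) and (2) of Theorem~\ref{MainTheorem} entering only for axioms (viii) and (ix). The one cosmetic difference is your treatment of item (d): you package the multiplicativity of $\varphi(x)$ via the grouplike identity $\Delta_D(w_x)=w_x\otimes w_x$ for $w_x=\sum_{g\in\Gamma^{t(x)}}\p{g}{x}$, whereas the paper carries out the equivalent computation componentwise on homogeneous elements $a^\mathrm{x}_g\otimes b^\mathrm{x}_h$; these are the same calculation in different notation.
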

\begin{proof}
Axioms (b), (i), and (ii) of Definition \ref{DefGrpdFA} are satisfied by parts (i) and (ii) of Proposition \ref{DirectSum1} and parts (i) and (ii) of Proposition \ref{ProductResult1}.

Axioms (v) and (vii) of Definition \ref{DefGrpdFA} are satisfied by Corollary \ref{GroupoidActionCor} and  part (iii) of Proposition \ref{ProductResult1} respectively.  

For axiom (d), we only need to verify that $\varphi(x): A^\mathrm{x}\rightarrow A^\mathrm{y}$ is an algebra isomorphism for $x\in \C{G}_1$ where $s(x)=\mathrm{x}$ and $t(x)=\mathrm{y}$.  By Corollary \ref{GroupoidActionCor}, $\varphi(x)$ is already an isomorphism of vector spaces.  Hence, we only need to check that 
\begin{equation}
\varphi(x)(\m{a^\mathrm{x}}{b^\mathrm{x}})=\m{(\varphi(x)a^\mathrm{x})}{(\varphi(x)b^\mathrm{x})}.
\end{equation} 
It suffices to verify this for the case when $a^\mathrm{x}=a^\mathrm{x}_g$ and $b^\mathrm{x}=b^\mathrm{x}_h$.    In this case, we have  
\begin{align}
\varphi(x)(\m{a^\mathrm{x}_g}{b^\mathrm{x}_h})&=\p{xghx^{-1}}{x}\rhd (\m{a^\mathrm{x}_g}{b^\mathrm{x}_h})\\
&=\p{xghx^{-1}}{x}\rhd m({a^\mathrm{x}_g}\otimes{b^\mathrm{x}_h})\\
&= m\left((\p{xgx^{-1}}{x}\rhd{a^\mathrm{x}_g})\otimes(\p{xhx^{-1}}{x}\rhd{b^\mathrm{x}_h})\right)\\
&= m\left((\varphi(x){a^\mathrm{x}_g})\otimes(\varphi(x){b^\mathrm{x}_h})\right)\\
&=\m{(\varphi(x){a^\mathrm{x}_g})}{(\varphi(x){b^\mathrm{x}_h})}.
\end{align}
Throughout the above calculation we have made use of part (iii) of Proposition \ref{DirectSum1}, and in the third equality, we have made use of the fact $m$ is $D(k[\C{G}])$-linear.

Axioms (c), (iii), and (iv) of Definition \ref{DefGrpdFA} are satisfied by Proposition \ref{etaResult1}; axiom (vi) of Definition \ref{DefGrpdFA} is satisfied by part (iii) of Proposition \ref{etaResult1} and by Proposition \ref{etaNonDegen}.

All that remains left to do is to show that axioms (viii) and (ix) of  Definition \ref{DefGrpdFA} are also satisfied.  We will now show that axioms (viii) and(ix) follow respectively from conditions (1) and (2) of Theorem \ref{MainTheorem}.  

For axiom (viii) of Definition \ref{DefGrpdFA}, let $a^\mathrm{x}_g\in A^\mathrm{x}_g$.  Then
\begin{align}
a^\mathrm{x}_g&=\sum_{\mathrm{y}\in \C{G}_0}\sum_{h\in \Gamma^\mathrm{y}} \p{h}{h}\rhd a^\mathrm{x}_g\\
&=\p{g}{g}\rhd a^\mathrm{x}_g\\
&=\varphi(g)a^\mathrm{x}_g
\end{align}
where the first equality follows from condition (1) of Theorem \ref{MainTheorem} and the second and third equalities follow from part (iii-a) of Proposition \ref{DirectSum1}.  This shows that axiom (viii) of  Definition \ref{DefGrpdFA} is satisfied.

For axiom (ix) of Definition \ref{DefGrpdFA}, let $g,h\in A^\mathrm{x}$ and for $c\in A^\mathrm{x}_{ghg^{-1}h^{-1}}$, let $l_c:A^\mathrm{x}\rightarrow A^\mathrm{x}$ be the linear map defined by $l_c(a^\mathrm{x}):=\m{c}{a^\mathrm{x}}$.  Then by part (iii) of Proposition \ref{DirectSum1} and by part (i) of Proposition \ref{ProductResult1}, we have the following:
\begin{align}
\mbox{Tr}\left(l_c\circ \rho(\p{hgh^{-1}}{h})\right)&=\mbox{Tr}\left(l_c\circ \rho(\p{hgh^{-1}}{h})|_{A^\mathrm{x}_g}:A^\mathrm{x}_g\rightarrow A^\mathrm{x}_g\right)\\
\mbox{Tr}\left(\rho(\p{h}{g^{-1}})\circ l_c\circ \rho(\p{h}{e_\mathrm{x}})\right)&= \mbox{Tr}\left(\rho(\p{h}{g^{-1}})\circ l_c|_{A^\mathrm{x}_h}: A^\mathrm{x}_h\rightarrow A^\mathrm{x}_h\right).
\end{align}  
Condition (2) of Theorem \ref{MainTheorem} then gives
 \begin{align}
 \label{axiom9A}
 \mbox{Tr}\left(l_c\circ \rho(\p{hgh^{-1}}{h})|_{A^\mathrm{x}_g}:A^\mathrm{x}_g\rightarrow A^\mathrm{x}_g\right)
=\mbox{Tr}\left(\rho(\p{h}{g^{-1}})\circ l_c|_{A^\mathrm{x}_h}: A^\mathrm{x}_h\rightarrow A^\mathrm{x}_h\right).
\end{align}
Since
 \begin{align}
  l_c\circ \rho(\p{hgh^{-1}}{h})|_{A^\mathrm{x}_g}=l_c\circ \varphi(h)|_{A^\mathrm{x}_g}
  \end{align}
  and
  \begin{align}
  \rho(\p{h}{g^{-1}})\circ l_c|_{A^\mathrm{x}_h}=  \varphi(g^{-1})\circ l_c|_{A^\mathrm{x}_h}
 \end{align}
 by part (iii) of Proposition \ref{DirectSum1} and the definition of $\varphi$, (\ref{axiom9A}) shows that axiom (ix) is satisfied and this completes the proof of Proposition \ref{GFO2GFA}.
\end{proof}
\subsection{Frobenius objects via $\C{G}$-FAs}
In this section, we move in the opposite direction and show that every $\C{G}$-FA is also a Frobenius object in $\mbox{Rep}(D(k[\C{G}]))$ which satisfies conditions (1) and (2) of Theorem \ref{MainTheorem}.  We begin with the following result:
\begin{proposition}
\label{InducedDkGResult}
Every $\C{G}$-FA is a left $D(k[\C{G}])$-module.  If $A$ is a $\C{G}$-FA with $\C{G}$-action $\varphi$, then its $D(k[\C{G}])$-action is the linear map defined by
\begin{equation}
\label{InducedDkGAction}
\rho(\p{g}{x})a^\mathrm{y}_h:=\delta_{h,x^{-1}gx}~\varphi(x)a^\mathrm{y}_h
\end{equation}
for $\p{g}{x}\in D(k[\C{G}])$ and $a^\mathrm{y}_h\in A^\mathrm{y}_h$.
\end{proposition}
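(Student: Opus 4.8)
The plan is to verify directly that the assignment (\ref{InducedDkGAction}) extends to a unital $k$-algebra homomorphism $\rho\colon D(k[\C{G}])\to\mathrm{End}_k(A)$; this is precisely what is needed for $A$ to be a left $D(k[\C{G}])$-module. The work splits into three routine checks: well-definedness of $\rho$ on all of $A$, unit preservation, and multiplicativity.

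First I would dispose of well-definedness. Because $A$ is a $\C{G}$-FA, axioms (b) and (i) of Definition \ref{DefGrpdFA} furnish the decomposition $A=\bigoplus_{\mathrm{y}\in\C{G}_0}\bigoplus_{h\in\Gamma^\mathrm{y}}A^\mathrm{y}_h$, so it suffices to prescribe $\rho(\p{g}{x})$ on each homogeneous summand and extend $k$-linearly, which is exactly what (\ref{InducedDkGAction}) does. The only point worth flagging is that the factor $\varphi(x)$ is never applied outside its domain: if the contribution of $a^\mathrm{y}_h$ is nonzero then $h=x^{-1}gx\in\Gamma^{s(x)}$ forces $\mathrm{y}=s(x)$, and then axiom (v) gives $\varphi(x)a^\mathrm{y}_h\in A^{t(x)}_{xhx^{-1}}=A^{t(x)}_{g}\subseteq A$, so $\rho(\p{g}{x})$ is a genuine $k$-endomorphism of $A$.

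Next I would check unit preservation. Writing $1=\sum_{\mathrm{x}\in\C{G}_0}\sum_{g\in\Gamma^\mathrm{x}}\p{g}{e_\mathrm{x}}$, using $e_\mathrm{x}^{-1}g\,e_\mathrm{x}=g$ and $\varphi(e_\mathrm{x})=\mathrm{id}_{A^\mathrm{x}}$ (axiom (d)), formula (\ref{InducedDkGAction}) yields $\rho(\p{g}{e_\mathrm{x}})a^\mathrm{y}_h=\delta_{\mathrm{x},\mathrm{y}}\,\delta_{g,h}\,a^\mathrm{y}_h$, so summing over $\mathrm{x}$ and $g$ gives $\rho(1)a^\mathrm{y}_h=a^\mathrm{y}_h$. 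Then I would verify multiplicativity. From the product law (\ref{DkG_product}) we have $\rho(\p{g}{x}\cdot\p{h}{y})=\delta_{x^{-1}gx,h}\,\rho(\p{g}{xy})$. On the other hand, applying $\rho(\p{g}{x})\circ\rho(\p{h}{y})$ to a homogeneous $a^\mathrm{z}_k$, using axiom (v) to locate $\varphi(y)a^\mathrm{z}_k$ in $A^{t(y)}_{yky^{-1}}$ and then $\varphi(x)\circ\varphi(y)=\varphi(xy)$ (axiom (d)), one obtains $\delta_{k,\,y^{-1}hy}\,\delta_{\,yky^{-1},\,x^{-1}gx}\,\varphi(xy)a^\mathrm{z}_k$; the elementary observation that $k=y^{-1}hy$ implies both $yky^{-1}=h$ and $y^{-1}(x^{-1}gx)y=(xy)^{-1}g(xy)$ collapses the two Kronecker deltas to $\delta_{x^{-1}gx,h}\,\delta_{k,\,(xy)^{-1}g(xy)}$, which is exactly $\delta_{x^{-1}gx,h}\,\rho(\p{g}{xy})a^\mathrm{z}_k$, as required.

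I do not anticipate a real obstacle; the argument is a sequence of short computations. The one thing demanding care is the consistent bookkeeping of the standing constraints $s(g)=t(g)=t(x)$ on a basis element $\p{g}{x}$ and $s(x)=t(y)$ for the composite $xy$, so that every invocation of $\varphi$ and every conjugate such as $x^{-1}gx$ or $y^{-1}hy$ is actually defined; once these are tracked, each of the three verifications above is essentially one line.
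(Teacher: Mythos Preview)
Your proposal is correct and follows essentially the same approach as the paper: both verify directly that $\rho(1)=\mathrm{id}_A$ and that $\rho(\p{g}{x})\circ\rho(\p{h}{y})=\rho(\p{g}{x}\cdot\p{h}{y})$ by computing on homogeneous elements and using $\varphi(x)\circ\varphi(y)=\varphi(xy)$ together with axiom (v). Your explicit well-definedness paragraph (ensuring $\varphi(x)$ is only applied on its domain) is a small addition the paper leaves implicit, but otherwise the arguments coincide.
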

\begin{proof}
To start, let $a\in A$ and decompose it as $a=\sum_{\mathrm{x}\in\C{G}_0}\sum_{g\in \Gamma^\mathrm{x}}a^\mathrm{x}_g$.  To show that (\ref{InducedDkGAction}) does indeed define a $D(k[\C{G}])$-action, we need to verify that 
 \begin{itemize}
 \item[(i)] $\rho(1)=id_A$, and 
 \item[(ii)] $\rho(\p{g_1}{x_1})\circ \rho(\p{g_2}{x_2})=\rho(\p{g_1}{x_1}\cdot \p{g_2}{x_2})$. 
 \end{itemize}
For (i), we have
\begin{align}
\nonumber
\rho(1)a=\sum_{\mathrm{x}\in \C{G}_0} \sum_{g\in \Gamma^\mathrm{x}}\rho(\p{g}{e_\mathrm{x}})a
=\sum_{\mathrm{x}\in \C{G}_0} \sum_{g\in \Gamma^\mathrm{x}}\varphi(e_\mathrm{x})a^\mathrm{x}_{g}
=\sum_{\mathrm{x}\in \C{G}_0} \sum_{g\in \Gamma^\mathrm{x}}a^\mathrm{x}_{g}
=a.
\end{align}
For (ii), we have 
\begin{align}
\nonumber
\rho(\p{g_1}{x_1})\circ \rho(\p{g_2}{x_2})a&=\rho(\p{g_1}{x_1})\circ\varphi(x_2)a^{s(x_2)}_{x_2^{-1}g_2x_2}\\
\label{InducedDkGAction1}
&=\delta_{x_1^{-1}g_1x_1,g_2}\varphi(x_1)\circ \varphi(x_2)a^{s(x_2)}_{x_2^{-1}g_2x_2}.
\end{align}
Since $\p{g_1}{x_1}\cdot \p{g_2}{x_2}=\delta_{x_1^{-1}g_1x_1,g_2}\p{g_1}{x_1x_2}$, we see that (ii) is satisfied for the case when $x_1^{-1}g_1x_1\neq g_2$.  For the case when $x_1^{-1}g_1x_1= g_2$, (\ref{InducedDkGAction1}) reduces to
\begin{align}
\varphi(x_1x_2)a^{s(x_2)}_{x_2^{-1}g_2x_2}&=\rho(\p{x_1g_2x_1^{-1}}{x_1x_2})a^{s(x_2)}_{x_2^{-1}g_2x_2}\\
&=\rho(\p{g_1}{x_1x_2})a^{s(x_2)}_{x_2^{-1}g_2x_2}\\
&=\rho(\p{g_1}{x_1x_2})a\\
&=\rho(\p{g_1}{x_1}\cdot \p{g_2}{x_2})a
\end{align}
and this completes the proof of (ii).
\end{proof}
\noindent Proposition \ref{InducedDkGResult} will be applied implicitly throughout this section.
\begin{remark}
Note that if one applies (i) and (ii) of Proposition \ref{DirectSum1} to the left $D(k[\C{G}])$-module given by Proposition \ref{InducedDkGResult}, the resulting direct sum decomposition is exactly the one from the original $\C{G}$-FA.  Hence, if $(\rho,A)$ is the left $D(k[\C{G}])$-module of Proposition \ref{InducedDkGResult} and $A=\bigoplus_{\mathrm{x}\in \C{G}_0}A^\mathrm{x}$ is the direct sum decomposition of the original $\C{G}$-FA, then the monoidal product $A\widehat{\otimes} A$ of $(\rho,A)$ with itself is $\bigoplus_{\mathrm{x}\in\C{G}_0}A^\mathrm{x}\otimes_k A^\mathrm{x}$ by Lemma \ref{tensor_product2a}.
\end{remark}
\begin{proposition}
\label{InducedAlgebraObject}
If $<\C{G},(A,\m{}{},\textbf{1}_A),\eta,\varphi>$ is a $\C{G}$-FA, then $((\rho,A),m,\mu)$ is a commutative algebra object in $\mbox{Rep}(D(k[\C{G}]))$ where 
\begin{itemize}
\item[(i)] $m:A\widehat{\otimes}A\rightarrow A$ is given by $m(a^\mathrm{x}\otimes b^\mathrm{x}):=\m{a^\mathrm{x}}{b^\mathrm{x}}\in A^\mathrm{x}$, and
\item[(ii)] $\mu: D(k[\C{G}])_t\rightarrow A$ is given by $1^\mathrm{x}\mapsto \textbf{1}_A^\mathrm{x}:=\rho(1^\mathrm{x})\textbf{1}_A\in A^\mathrm{x}_{e_\mathrm{x}}$.
\end{itemize}
\end{proposition}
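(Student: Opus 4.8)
The plan is to check, one piece at a time, that $m$ and $\mu$ are morphisms in $\mbox{Rep}(D(k[\C{G}]))$ and that the triple satisfies the three conditions of Definition \ref{AlgebraObjectDef} (with $\Phi$ suppressed). First I would record the structural facts needed. By the remark following Proposition \ref{InducedDkGResult}, the decomposition of $(\rho,A)$ produced by Proposition \ref{DirectSum1} coincides with the decomposition of the original $\C{G}$-FA, and by Lemma \ref{tensor_product2a} the monoidal square is $A\widehat{\otimes}A=\bigoplus_{\mathrm{x}\in\C{G}_0}A^\mathrm{x}\otimes_k A^\mathrm{x}$; since $\m{}{}$ restricts to a bilinear map $A^\mathrm{x}\times A^\mathrm{x}\to A^\mathrm{x}$ by axiom (ii) of Definition \ref{DefGrpdFA}, the formula in (i) defines a $k$-linear map $m:A\widehat{\otimes}A\to A$. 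Likewise $D(k[\C{G}])_t=\bigoplus_{\mathrm{x}}k1^\mathrm{x}$ by Lemma \ref{UnitObject}, so (ii) defines $\mu$ on a basis. I would also note that $\textbf{1}_A^\mathrm{x}=\rho(1^\mathrm{x})\textbf{1}_A$ is precisely the identity of the subalgebra $A^\mathrm{x}$ (because $\rho(1^\mathrm{x})$ is the projection onto $A^\mathrm{x}$ and the unit of a direct sum of algebras is the sum of the units of the summands), and hence, $A^\mathrm{x}$ being $\Gamma^\mathrm{x}$-graded, that $\textbf{1}_A^\mathrm{x}\in A^\mathrm{x}_{e_\mathrm{x}}$ (the unit of a group-graded algebra lies in the identity-graded piece).

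The heart of the argument is the $D(k[\C{G}])$-linearity of $m$. I would verify it on a homogeneous element $a^\mathrm{x}_k\otimes b^\mathrm{x}_l\in A^\mathrm{x}_k\otimes_k A^\mathrm{x}_l$: applying $\p{g}{x}$ through the coproduct $\Delta_D(\p{g}{x})=\sum_{\{g_1g_2=g\}}\p{g_1}{x}\otimes\p{g_2}{x}$ and using (\ref{InducedDkGAction}), only the summand with $g_1=xkx^{-1}$, $g_2=xlx^{-1}$ survives, so $m\big(\p{g}{x}\rhd(a^\mathrm{x}_k\otimes b^\mathrm{x}_l)\big)=\delta_{x^{-1}gx,\,kl}\,\m{(\varphi(x)a^\mathrm{x}_k)}{(\varphi(x)b^\mathrm{x}_l)}$; on the other hand $\m{a^\mathrm{x}_k}{b^\mathrm{x}_l}\in A^\mathrm{x}_{kl}$ by axiom (i), so $\p{g}{x}\rhd m(a^\mathrm{x}_k\otimes b^\mathrm{x}_l)=\delta_{x^{-1}gx,\,kl}\,\varphi(x)(\m{a^\mathrm{x}_k}{b^\mathrm{x}_l})$, and the two sides agree because $\varphi(x)$ is an algebra homomorphism (axiom (d)). For $\mu$, I would compute $\p{h}{y}\rhd\textbf{1}_A^\mathrm{x}$ directly from (\ref{InducedDkGAction}) using $\textbf{1}_A^\mathrm{x}\in A^\mathrm{x}_{e_\mathrm{x}}$: it is $0$ unless $\mathrm{x}=s(y)$ and $h=yy^{-1}$, in which case it equals $\varphi(y)\textbf{1}_A^{s(y)}=\textbf{1}_A^{t(y)}=\textbf{1}_A^{s(h)}$ (again because $\varphi(y)$ sends unit to unit); comparing with the formula for $\p{h}{y}\rhd 1^\mathrm{x}$ in Lemma \ref{UnitObject}(i-b), this is exactly $\mu(\p{h}{y}\rhd 1^\mathrm{x})$, so $\mu$ is $D(k[\C{G}])$-linear.

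It then remains to verify the three conditions of Definition \ref{AlgebraObjectDef}. Associativity is immediate: evaluated on $a^\mathrm{x}\otimes b^\mathrm{x}\otimes c^\mathrm{x}$, both $m\circ(id_A\otimes m)$ and $m\circ(m\otimes id_A)$ return $\m{a^\mathrm{x}}{(\m{b^\mathrm{x}}{c^\mathrm{x}})}=\m{(\m{a^\mathrm{x}}{b^\mathrm{x}})}{c^\mathrm{x}}$ by associativity of the $\C{G}$-FA product. For the unit property, $m\circ(\mu\otimes id_A)(1^\mathrm{x}\otimes a^\mathrm{x})=\m{\textbf{1}_A^\mathrm{x}}{a^\mathrm{x}}=a^\mathrm{x}$ since $\textbf{1}_A^\mathrm{x}$ is the identity of $A^\mathrm{x}$ (here axiom (ii) is used to discard cross terms), while $l_A(1^\mathrm{x}\otimes a^\mathrm{x})=a^\mathrm{x}$ by Lemma \ref{UnitObject}(iii); the identity $m\circ(id_A\otimes\mu)=r_A$ follows symmetrically. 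Finally, for commutativity, the braiding $c_{A,A}$ evaluated via (\ref{BraidingDef}) with the $R$-matrix (\ref{RmatrixDkG})--(\ref{Rx}) — exactly the computation performed inside the proof of Proposition \ref{ProductResult1}(iii) — sends $a^\mathrm{x}_g\otimes b^\mathrm{x}_h$ to $(\varphi(g)b^\mathrm{x}_h)\otimes a^\mathrm{x}_g$, so $m\circ c_{A,A}(a^\mathrm{x}_g\otimes b^\mathrm{x}_h)=\m{(\varphi(g)b^\mathrm{x}_h)}{a^\mathrm{x}_g}$, which equals $\m{a^\mathrm{x}_g}{b^\mathrm{x}_h}=m(a^\mathrm{x}_g\otimes b^\mathrm{x}_h)$ by axiom (vii). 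The only delicate point in all of this is the $D(k[\C{G}])$-linearity of $m$, where one must make the coproduct of $D(k[\C{G}])$, the $\Gamma^\mathrm{x}$-grading, and the $\C{G}$-action of the $\C{G}$-FA mesh; axioms (i) and (d) of Definition \ref{DefGrpdFA} are precisely what make this work.
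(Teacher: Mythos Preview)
Your proof is correct and follows essentially the same approach as the paper's: you verify $D(k[\C{G}])$-linearity of $m$ on bihomogeneous tensors using the coproduct $\Delta_D$, axiom (i), and the fact that $\varphi(x)$ is an algebra map (axiom (d)); you verify $D(k[\C{G}])$-linearity of $\mu$ by comparing $\rho(\p{h}{y})\textbf{1}_A^\mathrm{x}$ with $\mu(\p{h}{y}\rhd 1^\mathrm{x})$ via Lemma \ref{UnitObject}(i-b); and you obtain associativity, the unit property, and commutativity from the $\C{G}$-FA axioms together with the explicit $R$-matrix computation, exactly as in the paper. The only cosmetic differences are the order of the steps and that you cite the braiding computation from the proof of Proposition \ref{ProductResult1}(iii) rather than redoing it.
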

\begin{proof}
Its clear from the associativity of the $\C{G}$-FA product and the fact that $\textbf{1}_A$ is the multiplicative unit that $m$ and $\mu$ satisfy 1 and 2 of Definition \ref{AlgebraObjectDef}.  

Next, we verify that $m\circ c_{A,A}=m$.  Without loss of generality, take $a=a^\mathrm{x}_g\in A^\mathrm{x}_g$ and $b=b^\mathrm{x}_h\in A^\mathrm{x}_h$.  Then  
\begin{align}
m(a^\mathrm{x}_g\otimes b^\mathrm{x}_h)&=\m{a^\mathrm{x}_g}{b^\mathrm{x}_h}\\
&=\m{(\varphi(g)b^\mathrm{x}_h)}{a^\mathrm{x}_g}\\
&=m(\varphi(g)b^\mathrm{x}_h\otimes a^\mathrm{x}_g)\\
&=m(\rho(\p{ghg^{-1}}{g})b^\mathrm{x}_h\otimes a^\mathrm{x}_g)\\
&=m(\rho(\p{ghg^{-1}}{g})b^\mathrm{x}_h\otimes \rho(\p{g}{e_\mathrm{x}})a^\mathrm{x}_g)\\
&=m\left(\sum_{\mathrm{y}\in \C{G}_0} \sum_{l,m\in \Gamma^\mathrm{y}} \rho(\p{m}{l}) b^\mathrm{x}_h\otimes \rho(\p{l}{e_\mathrm{y}}) a^\mathrm{x}_g\right)\\
&=m\circ c_{A,A}(a^\mathrm{x}_g\otimes b^\mathrm{x}_h)
\end{align}
where the second equality follows from axiom (vii) of Definition \ref{DefGrpdFA}. 

The only thing that remains to be done is to show that $m$ and $\mu$ are $D(k[\C{G}])$-linear.   In the case of $m$, for $x\in \C{G}_1$ with $s(x)=\mathrm{x}$, we have
\begin{align}
\rho(\p{xghx^{-1}}{x}) m(a^\mathrm{x}_g\otimes b^\mathrm{x}_h)&= \varphi(x)(\m{a^\mathrm{x}_g}{b^\mathrm{x}_h})\\
&=\m{(\varphi(x)a^\mathrm{x}_g)}{(\varphi(x)b^\mathrm{x}_h)}\\
&=m\left((\rho(\p{xgx^{-1}}{x})a^\mathrm{x}_g)\otimes (\rho(\p{xhx^{-1}}{x})b^\mathrm{x}_h)\right)\\
&=\sum_{g_1h_1=gh} m\left((\rho(\p{xg_1x^{-1}}{x})a^\mathrm{x}_g)\otimes (\rho(\p{xh_1x^{-1}}{x})b^\mathrm{x}_h)\right)
\end{align}
(where the sum in the last equality is over all $g_1,h_1\in \Gamma^\mathrm{x}$ satisfying $g_1h_1=gh$).  Since the $D(k[\C{G}])$-action on $A\widehat{\otimes} A$ is induced by the coproduct of $D(k[\C{G}])$, the above calculation shows that $m$ is $D(k[\C{G}])$-linear.

In the case of $\mu$, it suffices to show that
\begin{equation}
\label{LeftRightDkG4mu}
\mu(\p{h}{y}\rhd1^\mathrm{x})=\rho(\p{h}{y})\mu(1^\mathrm{x}).
\end{equation}
By (i-b) of Lemma \ref{UnitObject}, the left side is
\begin{equation}
\label{LeftSide1}
\mu(\p{h}{y}\rhd 1^\mathrm{x})=\delta_{s(y),\mathrm{x}}~\delta_{h,yy^{-1}}\mu(1^{s(h)})=\delta_{s(y),\mathrm{x}}~\delta_{h,yy^{-1}}\textbf{1}^{s(h)}_A,
\end{equation}
and the right side is 
\begin{equation}
\label{RightSide1}
\rho(\p{h}{y})\mu(1^\mathrm{x})=\rho(\p{h}{y})\rho(1^\mathrm{x})\textbf{1}_A=\delta_{s(y),\mathrm{x}}\rho(\p{h}{y})\textbf{1}_A.
\end{equation}
Since
\begin{equation}
\nonumber
\textbf{1}_A=\sum_{\mathrm{z}\in\C{G}_0}\textbf{1}^\mathrm{z}_A,
\end{equation}
it follows easily from axioms (i) and (ii) of Definition \ref{DefGrpdFA} and the definition of $\rho$ that $\textbf{1}^\mathrm{z}_A$ is the unit element of $A^\mathrm{z}$ and $\textbf{1}^\mathrm{z}_A\in A^\mathrm{z}_{e_\mathrm{z}}$.  Hence,
\begin{align}
\nonumber
\rho(\p{h}{y})\textbf{1}_A&=\sum_{\mathrm{z}\in \C{G}_0}\rho(\p{h}{y})\textbf{1}^\mathrm{z}_A\\
\nonumber
&=\sum_{\mathrm{z}\in \C{G}_0} \delta_{e_\mathrm{z},y^{-1}hy}\varphi(y)1^\mathrm{z}_A\\
\nonumber
&=\delta_{h,yy^{-1}}\varphi(y)\textbf{1}^{s(y)}_A\\
\label{RightSide2}
&=\delta_{h,yy^{-1}}\textbf{1}^{t(y)}_A
\end{align}
where the last equality follows from the fact that $\varphi(y): A^{s(y)}\rightarrow A^{t(y)}$ is an isomorphism of algebras and must therefore map the unit of $A^{s(y)}$ to that of $A^{t(y)}$.  By substituting (\ref{RightSide2}) into (\ref{RightSide1}) and using the fact that $s(h)=t(y)$, we see that the right side and left side of (\ref{LeftRightDkG4mu}) are indeed equal and this completes the proof.\\
\end{proof}
\begin{notation}
As in the proof of Proposition \ref{InducedAlgebraObject}, we will use $\textbf{1}^\mathrm{x}_A$ to denote the unit element of $A^\mathrm{x}$.  
\end{notation}
\begin{notation}
For a vector space $V$, let $V^\ast$ denote the dual space of $V$, and for a linear map $f:V\rightarrow U$, let $f^\ast: U^\ast\rightarrow V^\ast$ denote the dual of $f$.
\end{notation}
\noindent The next lemma is a technical result which will be used shortly to induce a coproduct on the $\C{G}$-FA.
\begin{lemma}
\label{PsiMap}
Suppose $<\C{G},(A,\m{}{},\textbf{1}_A),\eta,\varphi>$ is a $\C{G}$-FA and $\psi: A\rightarrow A^\ast$ is the $k$-linear map defined by
\begin{equation}
\nonumber
\psi(a)(b):=\eta(a,b)
\end{equation}
where $a,b\in A$ and $\psi(a)\in A^\ast$.  Then
\begin{itemize}
\item[(i)]  $\psi|_{A^\mathrm{x}_g}$ is a vector space isomorphism from $A^\mathrm{x}_g$ to $(A^\mathrm{x}_{g^{-1}})^\ast$, where an element $f$ in $(A^\mathrm{x}_{g^{-1}})^\ast$ is also regarded as an element in $A^\ast$ via $f(b^\mathrm{y}_h)=\delta_{g^{-1},h}~f(b^\mathrm{y}_h)$ for $b^\mathrm{y}_h\in A^\mathrm{y}_h$. 
\item[(ii)] $\psi: A\rightarrow A^\ast$ is a vector space isomorphism; and
\item[(iii)] $\psi\left(\rho(\p{g}{x})a^\mathrm{x}_{x^{-1}gx}\right)=\rho(\p{x^{-1}g^{-1}x}{x^{-1}})^\ast\psi(a^\mathrm{x}_{x^{-1}gx})$.
\end{itemize}
\end{lemma}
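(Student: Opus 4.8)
The plan is to establish the three parts in order, building each on the previous one. For part (i), I would first verify that $\psi$ respects the direct sum decomposition in the right way: given $a^\mathrm{x}_g \in A^\mathrm{x}_g$ and $b^\mathrm{y}_h \in A^\mathrm{y}_h$, axiom (ii) of Definition \ref{DefGrpdFA} forces $\m{a^\mathrm{x}_g}{b^\mathrm{y}_h} = 0$ unless $\mathrm{x} = \mathrm{y}$, and then part (iii) of Proposition \ref{etaResult1} forces $\eta(a^\mathrm{x}_g, b^\mathrm{x}_h) = 0$ unless $gh = e_\mathrm{x}$, i.e.\ unless $h = g^{-1}$. Hence $\psi(a^\mathrm{x}_g)$ genuinely lives in $(A^\mathrm{x}_{g^{-1}})^\ast$ under the stated identification. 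Injectivity and surjectivity of $\psi|_{A^\mathrm{x}_g}$ then follow from axiom (vi) of Definition \ref{DefGrpdFA} (equivalently, Proposition \ref{etaNonDegen} on the Frobenius-object side): nondegeneracy of $\eta|_{A^\mathrm{x}_g \times A^\mathrm{x}_{g^{-1}}}$ is exactly the statement that the induced map $A^\mathrm{x}_g \to (A^\mathrm{x}_{g^{-1}})^\ast$ is an isomorphism (both spaces being finite dimensional of equal dimension, nondegeneracy on one side suffices).

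For part (ii), I would simply assemble part (i) over all $\mathrm{x} \in \C{G}_0$ and all $g \in \Gamma^\mathrm{x}$. Since $A = \bigoplus_{\mathrm{x}} \bigoplus_{g \in \Gamma^\mathrm{x}} A^\mathrm{x}_g$ and, dually, $A^\ast = \bigoplus_{\mathrm{x}} \bigoplus_{g \in \Gamma^\mathrm{x}} (A^\mathrm{x}_g)^\ast$, and since $\psi$ sends the $(\mathrm{x},g)$-summand isomorphically onto the $(\mathrm{x},g^{-1})$-summand (a bijection on index pairs), $\psi$ is a vector space isomorphism on the nose. The only mild care needed is to check that $\psi(a^\mathrm{x}_g)$, viewed in $A^\ast$, annihilates every summand other than $A^\mathrm{x}_{g^{-1}}$ — but that is precisely what part (i) already records.

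For part (iii), the plan is a direct computation using the defining property of the dual map together with the $\C{G}$-invariance of $\eta$. Fix $x \in \C{G}_1$ with $s(x) = \mathrm{x}$, set $\mathrm{y} = t(x)$, write $h' = x^{-1}gx \in \Gamma^\mathrm{x}$, and take an arbitrary $b \in A$, which we may assume lies in a single graded piece; by the support considerations from part (i) the only nonzero case is $b = b^\mathrm{y}_{g^{-1}} \in A^\mathrm{y}_{g^{-1}}$. Then, recalling that $\rho(\p{g}{x})a^\mathrm{x}_{h'} = \varphi(x)a^\mathrm{x}_{h'} \in A^\mathrm{y}_g$ by Proposition \ref{InducedDkGResult} (or part (iii) of Proposition \ref{DirectSum1}), I would compute
\begin{align}
\nonumber
\psi\!\left(\rho(\p{g}{x})a^\mathrm{x}_{h'}\right)\!\left(b^\mathrm{y}_{g^{-1}}\right)
&= \eta\!\left(\varphi(x)a^\mathrm{x}_{h'},\, b^\mathrm{y}_{g^{-1}}\right)\\
\nonumber
&= \eta\!\left(\varphi(x)a^\mathrm{x}_{h'},\, \varphi(x)\varphi(x^{-1})b^\mathrm{y}_{g^{-1}}\right)\\
\nonumber
&= \eta\!\left(a^\mathrm{x}_{h'},\, \varphi(x^{-1})b^\mathrm{y}_{g^{-1}}\right)\\
\nonumber
&= \eta\!\left(a^\mathrm{x}_{h'},\, \rho(\p{x^{-1}g^{-1}x}{x^{-1}})b^\mathrm{y}_{g^{-1}}\right)\\
\nonumber
&= \psi(a^\mathrm{x}_{h'})\!\left(\rho(\p{x^{-1}g^{-1}x}{x^{-1}})b^\mathrm{y}_{g^{-1}}\right)\\
\nonumber
&= \left(\rho(\p{x^{-1}g^{-1}x}{x^{-1}})^\ast\psi(a^\mathrm{x}_{h'})\right)\!\left(b^\mathrm{y}_{g^{-1}}\right),
\end{align}
where the second equality uses $\varphi(x)\varphi(x^{-1}) = \varphi(e_\mathrm{y}) = \mathrm{id}$, the third uses axiom (iv) of Definition \ref{DefGrpdFA} (invariance of $\eta$ under the $\C{G}$-action, applied to the morphism $x^{-1}$ with source $\mathrm{y}$), the fourth rewrites $\varphi(x^{-1})$ acting on $A^\mathrm{y}_{g^{-1}}$ as $\rho(\p{x^{-1}g^{-1}x}{x^{-1}})$ via part (iii) of Proposition \ref{DirectSum1}, and the last is the definition of the dual map. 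Since both sides vanish on all other graded summands $A^\mathrm{z}_k$ (by part (i), applied to the left-hand side which is supported on $(A^\mathrm{y}_{g^{-1}})^\ast$, and to the right-hand side since $\rho(\p{x^{-1}g^{-1}x}{x^{-1}})$ sends $A^\mathrm{z}_k$ into $A^\mathrm{x}_{?}$ nontrivially only when $\mathrm{z} = \mathrm{y}$, $k = g^{-1}$), the two functionals agree on all of $A$, which is the claim.

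The main obstacle I anticipate is purely bookkeeping rather than conceptual: keeping the grading indices straight — in particular tracking which group element labels the target piece after applying $\rho(\p{g}{x})$ versus $\rho(\p{x^{-1}g^{-1}x}{x^{-1}})$, and confirming that the "extend by zero" identification in the statement of part (i) is used consistently so that the dual-map identity in part (iii) is an equality of honest elements of $A^\ast$ and not merely of their restrictions to a single summand. Everything else is a routine application of the axioms of Definition \ref{DefGrpdFA} (chiefly (ii), (iv), (vi), (vii)) together with Propositions \ref{DirectSum1}, \ref{etaResult1}, \ref{etaNonDegen}, and \ref{InducedDkGResult}.
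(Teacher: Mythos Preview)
Your approach is essentially the paper's, and the computation in part (iii) is exactly right. Two small corrections are worth making.

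First, your citations of Proposition~\ref{etaResult1} and Proposition~\ref{etaNonDegen} are misplaced: those results live in section~3.1, where $\eta$ is \emph{constructed} from a Frobenius object via $\eta(a,b)=\varepsilon'(\m{a}{b})$. In Lemma~\ref{PsiMap} we are in section~3.2, where $\eta$ is given as part of the $\C{G}$-FA data. Everything you need is already an axiom: the vanishing of $\eta|_{A^\mathrm{x}_g\times A^\mathrm{x}_h}$ for $gh\neq e_\mathrm{x}$ and its nondegeneracy for $gh=e_\mathrm{x}$ are precisely axiom~(vi) of Definition~\ref{DefGrpdFA}, so cite that directly.

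Second, in part (i) you note that axiom~(ii) forces $\m{a^\mathrm{x}_g}{b^\mathrm{y}_h}=0$ when $\mathrm{x}\neq\mathrm{y}$, but that is a statement about the product, not about $\eta$; axiom~(vi) says nothing about $\eta|_{A^\mathrm{x}\times A^\mathrm{y}}$ for distinct objects. The missing link is axiom~(iii): write $\eta(a^\mathrm{x}_g,b^\mathrm{y}_h)=\eta(\m{a^\mathrm{x}_g}{b^\mathrm{y}_h},\textbf{1}_A)=0$. This is exactly what the paper does. With these two fixes your argument matches the paper's line for line.
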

\begin{proof}
For (i), the isomorphism from $A^\mathrm{x}_g$ to $(A^\mathrm{x}_{g^{-1}})^\ast$ follows directly from axiom (vi) of Definition \ref{DefGrpdFA}.  The same axiom also implies that $\psi(a^\mathrm{x}_g)(b^\mathrm{y}_h)=0$ when $\mathrm{y}=\mathrm{x}$ and $h\neq g^{-1}$.  For $\mathrm{y}\neq \mathrm{x}$, we have
\begin{align}
\nonumber
\psi(a^\mathrm{x}_g)(b^\mathrm{y}_h)&=\eta(a^\mathrm{x}_g,b^\mathrm{y}_h)\\
\nonumber
&=\eta(\m{a^\mathrm{x}_g}{b^\mathrm{y}_h},\textbf{1}_A)\\
\nonumber
&=0
\end{align}
where the second and third equality follow from axioms (iii) and (ii) of Definition \ref{DefGrpdFA} respectively.  In other words, 
\begin{equation}
\label{psiMap20}
\psi(a^\mathrm{x}_g)(b^\mathrm{y}_h)=\delta_{g^{-1},h}\psi(a^\mathrm{x}_g)(b^\mathrm{y}_h).
\end{equation} 

(ii) is a consequence of part (i) of Lemma \ref{PsiMap} and the fact that $A$ decomposes as $A=\bigoplus_{\mathrm{x}\in \C{G}_0}\bigoplus_{g\in \Gamma^\mathrm{x}}A^\mathrm{x}_g$.

For (iii), let $b^\mathrm{y}_h\in A^\mathrm{y}_h$.  Then we need to show that 
\begin{align}
\label{psiMap1}
\psi\left(\rho(\p{g}{x})a^\mathrm{x}_{x^{-1}gx}\right)(b^\mathrm{y}_h)=\psi(a^\mathrm{x}_{x^{-1}gx})\left(\rho(\p{x^{-1}g^{-1}x}{x^{-1}})b^\mathrm{y}_h\right).
\end{align}
Both sides of (\ref{psiMap1})  are zero for the case when $h\neq g^{-1}$ by (\ref{psiMap20}) and the definition of $\rho$.  For the case when $h=g^{-1}$, we have $\mathrm{y}=t(x)$ and
\begin{align}
\nonumber
\psi\left(\rho(\p{g}{x})a^\mathrm{x}_{x^{-1}gx}\right)(b^\mathrm{y}_{g^{-1}})&=\eta(\rho(\p{g}{x})a^\mathrm{x}_{x^{-1}gx},b^\mathrm{y}_{g^{-1}})\\
\nonumber
&=\eta(\varphi(x)a^\mathrm{x}_{x^{-1}gx},b^\mathrm{y}_{g^{-1}})\\
\nonumber
&=\eta(\varphi(x^{-1})\varphi(x)a^\mathrm{x}_{x^{-1}gx},\varphi(x^{-1})b^\mathrm{y}_{g^{-1}})\\
\nonumber
&=\eta(a^\mathrm{x}_{x^{-1}gx},\varphi(x^{-1})b^\mathrm{y}_{g^{-1}})\\
\nonumber
&=\eta(a^\mathrm{x}_{x^{-1}gx},\rho(\p{x^{-1}g^{-1}x}{x^{-1}})b^\mathrm{y}_{g^{-1}})\\
\nonumber
&=\psi(a^\mathrm{x}_{x^{-1}gx})\left(\rho(\p{x^{-1}g^{-1}x}{x^{-1}})b^\mathrm{y}_{g^{-1}}\right)
\end{align}
where the third equality follows from axiom (iv) of Definition \ref{DefGrpdFA}.
\end{proof}
\noindent In the next two lemmas, we construct the counit and coproduct maps which will give every $\C{G}$-FA the structure of a co-commutative coalgebra object in $\mbox{Rep}(D(k[\C{G}]))$.
\begin{lemma}
\label{counitDkGLinearA}
Suppose $<\C{G},(A,\m{}{},\textbf{1}_A),\eta,\varphi>$ is a $\C{G}$-FA and $\varepsilon:A\rightarrow D(k[\C{G}])_t$ is the $k$-linear map defined by
\begin{equation}
 \varepsilon(a):=\sum_{\mathrm{x}\in \C{G}_0}\eta(a^\mathrm{x},\textbf{1}_A)1^\mathrm{x} 
 \end{equation}
 for $a=\sum_{\mathrm{x}\in \C{G}_0}a^\mathrm{x}$.  Then $\varepsilon$ is $D(k[\C{G}])$-linear.
\end{lemma}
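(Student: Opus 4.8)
The plan is to verify $D(k[\C{G}])$-linearity on generators. Since $\varepsilon$, $\rho$, and the action of $D(k[\C{G}])$ on $D(k[\C{G}])_t$ are all $k$-linear and $A=\bigoplus_{\mathrm{y}\in\C{G}_0}\bigoplus_{h\in\Gamma^\mathrm{y}}A^\mathrm{y}_h$, it suffices to prove $\varepsilon(\rho(\p{g}{x})a^\mathrm{y}_h)=\p{g}{x}\rhd\varepsilon(a^\mathrm{y}_h)$ for an arbitrary $a^\mathrm{y}_h\in A^\mathrm{y}_h$ and an arbitrary basis element $\p{g}{x}$ of $D(k[\C{G}])$ (so $g\in\Gamma^{t(x)}$). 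After this reduction the argument is essentially Kronecker-delta bookkeeping together with one structural identity.

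First I would compute the right-hand side. Writing $\textbf{1}_A=\sum_{\mathrm{z}\in\C{G}_0}\textbf{1}^\mathrm{z}_A$ with each $\textbf{1}^\mathrm{z}_A$ the unit of $A^\mathrm{z}$ lying in $A^\mathrm{z}_{e_\mathrm{z}}$ (as in Proposition \ref{InducedAlgebraObject}), axioms (ii), (iii), and (vi) of Definition \ref{DefGrpdFA} give $\eta(a^\mathrm{y}_h,\textbf{1}_A)=\delta_{h,e_\mathrm{y}}\,\eta(a^\mathrm{y}_{e_\mathrm{y}},\textbf{1}_A)$, and then Lemma \ref{UnitObject}(i-b) yields
\[
\p{g}{x}\rhd\varepsilon(a^\mathrm{y}_h)=\delta_{h,e_\mathrm{y}}\,\delta_{\mathrm{y},s(x)}\,\delta_{g,e_{t(x)}}\,\eta(a^\mathrm{y}_{e_\mathrm{y}},\textbf{1}_A)\,1^{t(x)}.
\]
For the left-hand side, $\rho(\p{g}{x})a^\mathrm{y}_h=\delta_{h,x^{-1}gx}\,\varphi(x)a^\mathrm{y}_h$ with $\varphi(x)a^\mathrm{y}_h\in A^{t(x)}_{xhx^{-1}}$ by Corollary \ref{GroupoidActionCor}; the same vanishing considerations (axioms (ii), (iii), (vi) again) force $h=e_\mathrm{y}$, which together with $\delta_{h,x^{-1}gx}$ forces $\mathrm{y}=s(x)$ and $g=e_{t(x)}$, so
\[
\varepsilon(\rho(\p{g}{x})a^\mathrm{y}_h)=\delta_{h,e_\mathrm{y}}\,\delta_{\mathrm{y},s(x)}\,\delta_{g,e_{t(x)}}\,\eta(\varphi(x)a^\mathrm{y}_{e_\mathrm{y}},\textbf{1}_A)\,1^{t(x)}.
\]
Comparing the two displays, the lemma reduces to the single identity $\eta(\varphi(x)a^\mathrm{y}_{e_\mathrm{y}},\textbf{1}_A)=\eta(a^\mathrm{y}_{e_\mathrm{y}},\textbf{1}_A)$ in the case $s(x)=\mathrm{y}$.

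This last point is the only non-formal step and is where I expect the (modest) work to lie. I would prove it using that $\varphi(x)\colon A^\mathrm{y}\to A^{t(x)}$ is an algebra isomorphism (axiom (d)), hence carries $\textbf{1}^\mathrm{y}_A$ to $\textbf{1}^{t(x)}_A$: discarding cross terms via axioms (ii) and (iii) we may replace $\textbf{1}_A$ by $\textbf{1}^\mathrm{y}_A$ on the right and by $\textbf{1}^{t(x)}_A=\varphi(x)\textbf{1}^\mathrm{y}_A$ on the left, after which the $\varphi$-invariance of $\eta$ (axiom (iv)) gives $\eta(\varphi(x)a^\mathrm{y}_{e_\mathrm{y}},\varphi(x)\textbf{1}^\mathrm{y}_A)=\eta(a^\mathrm{y}_{e_\mathrm{y}},\textbf{1}^\mathrm{y}_A)$. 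This completes the proof.
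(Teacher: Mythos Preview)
Your proof is correct and follows essentially the same approach as the paper: both reduce to basis elements $\p{g}{x}$ and $a^\mathrm{y}_h$, compute each side using Lemma~\ref{UnitObject}(i-b) and the definition of $\rho$, and identify the two via $\varphi(x)\textbf{1}^{s(x)}_A=\textbf{1}^{t(x)}_A$ (axiom (d)) together with the $\varphi$-invariance of $\eta$ (axiom (iv)). The only cosmetic difference is that you extract the factor $\delta_{h,e_\mathrm{y}}$ at the outset via axiom (vi), whereas the paper carries $h$ through the $\varphi$-invariance step and invokes axiom (vi) only at the very end to match the Kronecker deltas.
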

\begin{proof}
It suffices to show that 
\begin{equation}
\label{counitDkGLinear}
\varepsilon\left(\rho(\p{g}{x})a^\mathrm{y}_h\right)=\p{g}{x}\rhd \varepsilon(a^\mathrm{y}_h)
\end{equation}
for $a^\mathrm{y}_h\in A^\mathrm{y}_h$.  

From the definition of $\rho$, we see that the left side of (\ref{counitDkGLinear}) is zero when $\mathrm{y}\neq s(x)$.  Likewise, the right side is also zero when $\mathrm{y}\neq s(x)$ since 
\begin{align}
\nonumber
\p{g}{x}\rhd \varepsilon(a^\mathrm{y}_h)&=\eta(a^\mathrm{y}_h,\textbf{1}_A)\p{g}{x}\rhd 1^\mathrm{y}\\
\nonumber
&=\delta_{s(x),\mathrm{y}}\delta_{g,xx^{-1}} ~\eta(a^\mathrm{y}_h,\textbf{1}_A)1^{t(x)}\\
\nonumber
&=\delta_{s(x),\mathrm{y}}\delta_{g,xx^{-1}}~ \eta(\m{a^\mathrm{y}_h}{\textbf{1}^\mathrm{y}_A},\textbf{1}_A)1^{t(x)}\\
\label{counitDkGLinear1}
&=\delta_{s(x),\mathrm{y}}\delta_{g,xx^{-1}}~ \eta(a^\mathrm{y}_h,\textbf{1}^\mathrm{y}_A)1^{t(x)}
\end{align}
where the second equality follows from part (i-b) of Lemma \ref{UnitObject} and the last equality follows from axiom (iii) of Definition \ref{DefGrpdFA}.

For the case when $\mathrm{y}=s(x)$, the left side of (\ref{counitDkGLinear}) can be rewritten as 
\begin{align}
\nonumber
\varepsilon\left(\rho(\p{g}{x})a^{s(x)}_h\right)&=\eta(\rho(\p{g}{x})a^{s(x)}_h,\textbf{1}_A)1^{t(x)}\\
\nonumber
&=\delta_{x^{-1}gx,h}~\eta(\varphi(x)a^{s(x)}_h,\textbf{1}_A)1^{t(x)}\\
\nonumber
&=\delta_{x^{-1}gx,h}~\eta(\m{(\varphi(x)a^{s(x)}_h)}{\textbf{1}^{t(x)}_A},\textbf{1}_A)1^{t(x)}\\
\nonumber
&=\delta_{x^{-1}gx,h}~\eta(\varphi(x)a^{s(x)}_h,\textbf{1}^{t(x)}_A)1^{t(x)}\\
\nonumber
&=\delta_{x^{-1}gx,h}~\eta(\varphi(x)a^{s(x)}_h,\varphi(x)\textbf{1}^{s(x)}_A)1^{t(x)}\\
\nonumber
&=\delta_{x^{-1}gx,h}~\eta(a^{s(x)}_h,\textbf{1}^{s(x)}_A)1^{t(x)}\\
\label{counitDkGLinear2}
&=\delta_{g,xx^{-1}}~\eta(a^{s(x)}_h,\textbf{1}^{s(x)}_A)1^{t(x)}
\end{align}
where the first equality follows from the fact that $\rho(\p{g}{x})a^{s(x)}_h\in A^{t(x)}$; the sixth equality follows from axiom (iv) of Definition \ref{DefGrpdFA}; and the seventh equality follows from axiom (vi) of Definition \ref{DefGrpdFA} and the fact that $\textbf{1}^{s(x)}_A\in A^{s(x)}_{e_{s(x)}}$.  

By comparing (\ref{counitDkGLinear2}) with (\ref{counitDkGLinear1}), we see that (\ref{counitDkGLinear}) is also satisfied when $\mathrm{y}=s(x)$.
\end{proof}
\begin{lemma}
\label{coproductDkGLinear}
Suppose $<\C{G},(A,\m{}{},\textbf{1}_A),\eta,\varphi>$ is a $\C{G}$-FA.  Let $\psi$ be the map given in Lemma \ref{PsiMap}, $m^{op}:A\widehat{\otimes} A\rightarrow A$ be the $k$-linear map given by $m^{op}(a^\mathrm{x}\otimes b^\mathrm{x}):=\m{b^\mathrm{x}}{a^\mathrm{x}}$, and let $\Delta:A\rightarrow A\widehat{\otimes} A$ be the $k$-linear map given by
\begin{equation}
\nonumber
\Delta:=(\psi^{-1}\otimes \psi^{-1})\circ (m^{op})^{\ast}\circ \psi.
\end{equation}
Then
\begin{itemize}
\item[(i)] $\Delta(a^\mathrm{x}_g)\in \bigoplus_{g_1g_2=g}A^\mathrm{x}_{g_1}\otimes_k A^\mathrm{x}_{g_2}$ for all $a^\mathrm{x}_g\in A^\mathrm{x}_g$, and
\item[(ii)] $\Delta$ is $D(k[\C{G}])$-linear
\end{itemize}
(where the direct sum in (i) is over all $g_1,g_2\in \Gamma^\mathrm{x}$ satisfying $g_1g_2=g$).
\end{lemma}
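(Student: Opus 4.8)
The plan is to extract an explicit formula for $\Delta$ in terms of $\eta$-dual bases and then read off both statements from it.

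For the formula, I would first choose, for each $\mathrm{x}\in\C{G}_0$, a basis $\{e^\mathrm{x}_i\}_i$ of $A^\mathrm{x}$ adapted to the $\Gamma^\mathrm{x}$-grading, say $e^\mathrm{x}_i\in A^\mathrm{x}_{g_i}$. Axiom (vi) of Definition \ref{DefGrpdFA} (nondegeneracy of $\eta$ on $A^\mathrm{x}_g\times A^\mathrm{x}_{g^{-1}}$, and vanishing on $A^\mathrm{x}_g\times A^\mathrm{x}_h$ for $h\neq g^{-1}$), together with $\eta(A^\mathrm{x},A^\mathrm{y})=0$ for $\mathrm{x}\neq\mathrm{y}$ (observed in the proof of Lemma \ref{PsiMap}), lets me choose a matching family $\{f^\mathrm{x}_i\}_i$ with $f^\mathrm{x}_i\in A^\mathrm{x}_{g_i^{-1}}$ and $\eta(e^\mathrm{x}_i,f^\mathrm{x}_j)=\delta_{ij}$. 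Since $\psi(e^\mathrm{x}_i)$ is the functional $\eta(e^\mathrm{x}_i,-)$, it is dual to $f^\mathrm{x}_i$, so $\psi^{-1}$ returns $e^\mathrm{x}_i$ from that dual functional (Lemma \ref{PsiMap}(i)--(ii)). Evaluating $\Delta=(\psi^{-1}\otimes\psi^{-1})\circ(m^{op})^\ast\circ\psi$ against the basis $\{f^\mathrm{x}_i\otimes f^\mathrm{x}_j\}$ of $A\widehat{\otimes}A=\bigoplus_\mathrm{x}A^\mathrm{x}\otimes_kA^\mathrm{x}$ (Lemma \ref{tensor_product2a}) --- using $(m^{op})^\ast(\psi(a))(f^\mathrm{x}_i\otimes f^\mathrm{x}_j)=\psi(a)(\m{f^\mathrm{x}_j}{f^\mathrm{x}_i})=\eta(a,\m{f^\mathrm{x}_j}{f^\mathrm{x}_i})$ --- I obtain
\[
\Delta(a)=\sum_{\mathrm{x}\in\C{G}_0}\sum_{i,j}\eta\bigl(a,\m{f^\mathrm{x}_j}{f^\mathrm{x}_i}\bigr)\,e^\mathrm{x}_i\otimes e^\mathrm{x}_j ,
\]
with only the $\mathrm{x}=\mathrm{y}$ summand surviving when $a\in A^\mathrm{y}$. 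The key point, used twice below, is that this holds for \emph{any} choice of a graded basis together with its inverse-graded $\eta$-dual basis. Part (i) is then immediate: for $a=a^\mathrm{x}_g$, Proposition \ref{ProductResult1}(i) gives $\m{f^\mathrm{x}_j}{f^\mathrm{x}_i}\in A^\mathrm{x}_{g_j^{-1}g_i^{-1}}$, and axiom (vi) forces $\eta(a^\mathrm{x}_g,\m{f^\mathrm{x}_j}{f^\mathrm{x}_i})=0$ unless $g\,g_j^{-1}g_i^{-1}=e_\mathrm{x}$, i.e.\ $g_ig_j=g$, so every surviving term lies in $A^\mathrm{x}_{g_i}\otimes_kA^\mathrm{x}_{g_j}$ with $g_ig_j=g$.

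For part (ii), by $k$-linearity it suffices to check $\Delta(\rho(\p{g}{x})a)=\p{g}{x}\rhd\Delta(a)$ on homogeneous $a$. Both sides vanish unless $a\in A^{s(x)}$ (Proposition \ref{DirectSum1}(iii)), and --- combining part (i), the coproduct formula $\Delta_D(\p{g}{x})=\sum_{g_1g_2=g}\p{g_1}{x}\otimes\p{g_2}{x}$, and Proposition \ref{DirectSum1}(iii) / Corollary \ref{GroupoidActionCor} --- on the only graded summands of $A\widehat{\otimes}A$ on which $\Delta(a)$ is supported, $\p{g}{x}$ acts as $\varphi(x)\otimes\varphi(x)$. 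So it is enough to prove $\Delta(\varphi(x)v)=(\varphi(x)\otimes\varphi(x))\Delta(v)$ for $v\in A^\mathrm{x}$ and $x\in\C{G}_1$ with $s(x)=\mathrm{x}$, $t(x)=\mathrm{y}$. Since $\varphi(x):A^\mathrm{x}\to A^\mathrm{y}$ is a vector space isomorphism carrying $A^\mathrm{x}_h$ onto $A^\mathrm{y}_{xhx^{-1}}$ (Corollary \ref{GroupoidActionCor}) and preserving $\eta$ (axiom (iv)), the pair $\{\varphi(x)e^\mathrm{x}_i\}_i$, $\{\varphi(x)f^\mathrm{x}_i\}_i$ is itself a graded basis together with its inverse-graded $\eta$-dual basis for $A^\mathrm{y}$. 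Inserting this pair into the formula above and using that $\varphi(x)$ is an algebra homomorphism (so $\m{\varphi(x)f^\mathrm{x}_j}{\varphi(x)f^\mathrm{x}_i}=\varphi(x)\m{f^\mathrm{x}_j}{f^\mathrm{x}_i}$) together with axiom (iv) once more, I get
\[
\Delta(\varphi(x)v)=\sum_{i,j}\eta\bigl(v,\m{f^\mathrm{x}_j}{f^\mathrm{x}_i}\bigr)\,\varphi(x)e^\mathrm{x}_i\otimes\varphi(x)e^\mathrm{x}_j=(\varphi(x)\otimes\varphi(x))\Delta(v).
\]

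I expect the main obstacle to be the first step: making precise the identification of $(A\widehat{\otimes}A)^\ast$ inside $A^\ast\otimes A^\ast$ so that the dual-basis computation defining $\Delta$ is unambiguous, and tracking group-element inverses and orderings carefully --- it is $\m{f^\mathrm{x}_j}{f^\mathrm{x}_i}\in A^\mathrm{x}_{g_j^{-1}g_i^{-1}}=A^\mathrm{x}_{(g_ig_j)^{-1}}$, with that ordering, that yields the constraint ``$g_1g_2=g$'' in (i) rather than its reverse. Everything else is a direct substitution of the axioms of Definition \ref{DefGrpdFA}.
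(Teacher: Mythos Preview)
Your argument is correct. The approach differs from the paper's in packaging, though not in substance. The paper works entirely at the dual-space level: it never writes down an explicit formula for $\Delta$, but instead pushes $(m^{op})^\ast\circ\psi$ through the action using Lemma~\ref{PsiMap}(iii), which is the dual-space avatar of $\eta$-invariance, together with the fact that $\varphi(x^{-1})$ is an algebra homomorphism to pass the action through $(m^{op})^\ast$. You instead extract the closed formula $\Delta(a)=\sum_{i,j}\eta(a,\m{f^\mathrm{x}_j}{f^\mathrm{x}_i})\,e^\mathrm{x}_i\otimes e^\mathrm{x}_j$ and exploit its basis-independence: since $\varphi(x)$ transports a graded $\eta$-dual pair to another one and is multiplicative, the formula is manifestly $\varphi(x)$-equivariant. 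Your route is more concrete and arguably more transparent; the paper's is basis-free and plugs directly into the machinery of Lemma~\ref{PsiMap}. Two small points: the grading of the product $\m{f^\mathrm{x}_j}{f^\mathrm{x}_i}\in A^\mathrm{x}_{g_j^{-1}g_i^{-1}}$ should be cited from axiom~(i) of Definition~\ref{DefGrpdFA} rather than Proposition~\ref{ProductResult1} (which concerns algebra objects, not $\C{G}$-FAs); and your sentence ``on the only graded summands \ldots\ $\p{g}{x}$ acts as $\varphi(x)\otimes\varphi(x)$'' is only literally true when $a$ has degree $x^{-1}gx$ --- otherwise the action is zero --- but since both sides of the target identity vanish in that case, and since you then prove $\Delta(\varphi(x)v)=(\varphi(x)\otimes\varphi(x))\Delta(v)$ for \emph{all} $v\in A^\mathrm{x}$, the reduction is sound.
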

\begin{proof}
For (i), note that by part (i) of Lemma \ref{PsiMap}, $(m^{op})^\ast\circ \psi(a^\mathrm{x}_g)(b^\mathrm{y}_h\otimes c^\mathrm{y}_l)=0$ for all $b^\mathrm{y}_h\in A^\mathrm{y}_h$ and $c^\mathrm{y}_l\in A^\mathrm{y}_l$ satisfying $lh\neq g^{-1}$.  This implies that 
\begin{equation}
\label{coproductDkGLinear2}
(m^{op})^\ast\circ \psi(a^\mathrm{x}_g)\in \bigoplus_{g_1g_2=g}(A^\mathrm{x}_{g_1^{-1}})^\ast\otimes (A^\mathrm{x}_{g_2^{-1}})^\ast.
\end{equation}
Part (i) of Lemma \ref{coproductDkGLinear} then follows from part (i) of Lemma \ref{PsiMap}.

For (ii), it suffices to show that 
\begin{equation}
\label{coproductDkGLinear1}
\Delta(\rho(\p{g}{x})a^\mathrm{y}_h)=\sum_{g_1g_2=g}[\rho(\p{g_1}{x})\otimes \rho(\p{g_2}{x})]\Delta(a^\mathrm{y}_h)
\end{equation}
for $a^\mathrm{y}_h\in A^\mathrm{y}_h$ (where the sum is over all $g_1,g_2\in \Gamma^{t(x)}$ satisfying $g_1g_2=g$).  From the definition of $\rho$, the left side is zero for $h\neq x^{-1}gx$.  By (i) of Lemma \ref{coproductDkGLinear}, the right side is also zero for $h\neq x^{-1}gx$.  

Let $\mathrm{x}=s(x)$.  For the case when $h=x^{-1}gx$,  we have
\begin{align}
\nonumber
(m^{op})^\ast\circ\psi(\rho(\p{g}{x})a^\mathrm{x}_{x^{-1}gx})&=(m^{op})^\ast\circ \rho(\p{x^{-1}gx}{x^{-1}})^\ast \circ\psi(a^\mathrm{x}_{x^{-1}gx})\\
\label{coproductDkGLinear3}
&=\left[\sum_{g_1g_2=g}\rho(\p{x^{-1}g_1^{-1}x}{x^{-1}})^\ast\otimes \rho(\p{x^{-1}g_2^{-1}x}{x^{-1}})^\ast\right]\circ (m^{op})^\ast\circ \psi(a^\mathrm{x}_{x^{-1}gx})
\end{align}
where the first equality follows from part (iii) of Lemma \ref{PsiMap} and the second equality follows the definition of $\rho$ and the fact that $\varphi(x^{-1})$ is an algebra homomorphism. 

Since
\begin{equation}
\nonumber
(m^{op})^\ast\circ \psi(a^\mathrm{x}_{x^{-1}gx})\in \bigoplus_{g_1g_2=g}(A^\mathrm{x}_{x^{-1}g_1^{-1}x})^\ast\otimes_k (A^\mathrm{x}_{x^{-1}g_2^{-1}x})^\ast
 \end{equation}
 by (\ref{coproductDkGLinear2}), it follows from (i) of Lemma \ref{PsiMap} that 
 \begin{equation}
 \label{coproductDkGLinear4}
 (m^{op})^\ast\circ \psi(a^\mathrm{x}_{x^{-1}gx})=\sum_{g_1g_2=g}\sum_{i=1}^{n[g_1]}\psi(u^\mathrm{x}_{x^{-1}g_1x,i})\otimes \psi(v^\mathrm{x}_{x^{-1}g_2x,i})
 \end{equation}
 for some $u^\mathrm{x}_{x^{-1}g_1x,i}\in A^\mathrm{x}_{x^{-1}g_1x}$ and $v^\mathrm{x}_{x^{-1}g_2x,i}\in A^\mathrm{x}_{x^{-1}g_2x}$.   In particular,
 \begin{equation}
 \Delta(a^\mathrm{x}_{x^{-1}gx})=\sum_{g_1g_2=g}\sum_{i=1}^{n[g_1]}u^\mathrm{x}_{x^{-1}g_1x,i}\otimes v^\mathrm{x}_{x^{-1}g_2x,i}.
 \end{equation}
 
Substituting (\ref{coproductDkGLinear4}) into the right side of (\ref{coproductDkGLinear3}) and applying (iii) of Lemma \ref{PsiMap} as well as the fact that $\rho(\p{s}{y})^\ast\psi(c^\mathrm{z}_t)=0$ for $s\neq t^{-1}$ gives
 \begin{equation}
 \label{coproductDkGLinear5}
 (m^{op})^\ast\circ\psi(\rho(\p{g}{x})a^\mathrm{x}_{x^{-1}gx})=\sum_{g_1g_2=g}\sum_{i=1}^{n[g_1]}\psi(\rho(\p{g_1}{x})u^\mathrm{x}_{x^{-1}g_1x,i})\otimes \psi(\rho(\p{g_2}{x})v^\mathrm{x}_{x^{-1}g_2x,i}).
 \end{equation}
Applying $\psi^{-1}\otimes \psi^{-1}$ to both sides of (\ref{coproductDkGLinear5}) (and using the definition of $\rho$) yields
\begin{align}
\nonumber
\Delta(\rho(\p{g}{x})a^\mathrm{x}_{x^{-1}gx})&=\sum_{g_1g_2=g}[\rho(\p{g_1}{x})\otimes \rho(\p{g_2}{x})]\Delta(a^\mathrm{x}_{x^{-1}gx})
\end{align}
which completes the proof.
 \end{proof}
 \begin{proposition}
 \label{InducedCoalgebraObject}
Suppose $<\C{G},(A,\m{}{},\textbf{1}_A),\eta,\varphi>$ is a $\C{G}$-FA and $\varepsilon$ and $\Delta$ are the maps given in Lemmas \ref{counitDkGLinearA} and \ref{coproductDkGLinear} respectively.  Then $((\rho,A),\Delta,\varepsilon)$ is a co-commutative coalgebra object in $\mbox{Rep}(D(k[\C{G}]))$.
 \end{proposition}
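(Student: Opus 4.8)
The plan is as follows. By Lemmas \ref{counitDkGLinearA} and \ref{coproductDkGLinear}, the maps $\varepsilon: A\to D(k[\C{G}])_t$ and $\Delta: A\to A\widehat{\otimes}A$ are already morphisms in $\mbox{Rep}(D(k[\C{G}]))$, so it only remains to verify coassociativity, the counit property, and co-commutativity as identities of $k$-linear maps (the associator being trivial). Everything is controlled by a single observation: unwinding $\Delta=(\psi^{-1}\otimes\psi^{-1})\circ(m^{op})^\ast\circ\psi$ together with $\psi(a)(b)=\eta(a,b)$ shows that, for $\mathrm{x}\in\C{G}_0$ and $a^\mathrm{x}\in A^\mathrm{x}$, $\Delta(a^\mathrm{x})$ is the \emph{unique} element $\sum_i u_i\otimes v_i$ of $A^\mathrm{x}\otimes_k A^\mathrm{x}$ satisfying $\sum_i\eta(u_i,b)\,\eta(v_i,c)=\eta(a^\mathrm{x},\m{c}{b})$ for all $b,c\in A^\mathrm{x}$; I will call this the \emph{defining identity}. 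Uniqueness holds because $\eta$ restricts to a nondegenerate pairing on each $A^\mathrm{x}$ (axiom (vi) of Definition \ref{DefGrpdFA}), and all maps in sight respect the decomposition $A=\bigoplus_\mathrm{x}A^\mathrm{x}$; by Lemma \ref{coproductDkGLinear}(i) one may moreover take the $u_i,v_i$ homogeneous with $\deg u_i\cdot\deg v_i$ equal to the $\Gamma^\mathrm{x}$-degree of $a^\mathrm{x}$ whenever the latter is homogeneous.

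For the counit property I would evaluate $l_A\circ(\varepsilon\otimes id_A)\circ\Delta$ on $a^\mathrm{x}\in A^\mathrm{x}$. Since $\varepsilon(u_i)=\eta(u_i,\textbf{1}^\mathrm{x}_A)\,1^\mathrm{x}$ by Lemma \ref{counitDkGLinearA} and $l_A(1^\mathrm{x}\otimes v_i)=v_i$ by Lemma \ref{UnitObject}(iii), this equals $\sum_i\eta(u_i,\textbf{1}^\mathrm{x}_A)\,v_i$; pairing against an arbitrary $c\in A^\mathrm{x}$ and applying the defining identity with $b=\textbf{1}^\mathrm{x}_A$ (so $\m{c}{\textbf{1}^\mathrm{x}_A}=c$) yields $\sum_i\eta(u_i,\textbf{1}^\mathrm{x}_A)\,\eta(v_i,c)=\eta(a^\mathrm{x},c)$, and nondegeneracy of $\eta$ on $A^\mathrm{x}$ forces $l_A\circ(\varepsilon\otimes id_A)\circ\Delta(a^\mathrm{x})=a^\mathrm{x}$. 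The identity $r_A\circ(id_A\otimes\varepsilon)\circ\Delta=id_A$ is entirely parallel, using $r_A(v_i\otimes 1^\mathrm{x})=v_i$ and the defining identity with $\textbf{1}^\mathrm{x}_A$ placed in the first slot. For coassociativity I would pair $(id_A\otimes\Delta)\circ\Delta(a^\mathrm{x})$ and $(\Delta\otimes id_A)\circ\Delta(a^\mathrm{x})$ against $b\otimes c\otimes d$, using $\eta$ in each of the three slots; two nested applications of the defining identity collapse the first expression to $\eta\bigl(a^\mathrm{x},\m{(\m{d}{c})}{b}\bigr)$ and the second to $\eta\bigl(a^\mathrm{x},\m{d}{(\m{c}{b})}\bigr)$, and these coincide by associativity of $\m{}{}$ (axiom (b) of Definition \ref{DefGrpdFA}), after which nondegeneracy of $\eta$ on $A^\mathrm{x}$ in each slot upgrades the equality of pairings to equality of the two triple tensors.

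Co-commutativity, $c_{A,A}\circ\Delta=\Delta$, is the step I expect to demand the most care, and it is the exact dual of the computation in Proposition \ref{InducedAlgebraObject} showing $m\circ c_{A,A}=m$. Taking $a^\mathrm{x}=a^\mathrm{x}_g$ and writing $\Delta(a^\mathrm{x}_g)=\sum_i u_i\otimes v_i$ with $u_i\in A^\mathrm{x}_{h_i}$ and $v_i\in A^\mathrm{x}_{h_i^{-1}g}$, the braiding formula (\ref{BraidingDef}) and the R-matrix (\ref{Rx}), together with Proposition \ref{DirectSum1}(ii)--(iii) and the definition of $\varphi$ in Corollary \ref{GroupoidActionCor}, reduce $c_{A,A}(u_i\otimes v_i)=\sum_{q\in\Gamma^\mathrm{x}}\rho(\p{q}{h_i})v_i\otimes u_i$ to its single surviving term $q=gh_i^{-1}$, giving $c_{A,A}(u_i\otimes v_i)=(\varphi(h_i)v_i)\otimes u_i$. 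Pairing $c_{A,A}(\Delta(a^\mathrm{x}_g))$ against $b^\mathrm{x}_k\otimes c^\mathrm{x}_l$ (via $\eta$ in each slot), one observes that $\eta(u_i,c^\mathrm{x}_l)\neq0$ forces $h_i=l^{-1}$, so on the support of the pairing the twist $\varphi(h_i)=\varphi(l^{-1})$ is independent of $i$; moving it across $\eta$ by axiom (iv) of Definition \ref{DefGrpdFA} turns the pairing into $\sum_i\eta(v_i,\varphi(l)b^\mathrm{x}_k)\,\eta(u_i,c^\mathrm{x}_l)=\eta\bigl(a^\mathrm{x}_g,\m{(\varphi(l)b^\mathrm{x}_k)}{c^\mathrm{x}_l}\bigr)$ by the defining identity, whereas the corresponding pairing of $\Delta(a^\mathrm{x}_g)$ itself is $\sum_i\eta(u_i,b^\mathrm{x}_k)\,\eta(v_i,c^\mathrm{x}_l)=\eta\bigl(a^\mathrm{x}_g,\m{c^\mathrm{x}_l}{b^\mathrm{x}_k}\bigr)$. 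These agree for all $b,c\in A^\mathrm{x}$ precisely because $\m{c^\mathrm{x}_l}{b^\mathrm{x}_k}=\m{(\varphi(l)b^\mathrm{x}_k)}{c^\mathrm{x}_l}$, which is axiom (vii) of Definition \ref{DefGrpdFA}; nondegeneracy of $\eta$ then delivers $c_{A,A}\circ\Delta=\Delta$, completing the proof that $((\rho,A),\Delta,\varepsilon)$ is a co-commutative coalgebra object. The one genuine obstacle is confined to this last paragraph: keeping the $\Gamma^\mathrm{x}$-gradings straight through the braiding and through $\varphi$, and noticing that the $i$-dependence of the twist drops out on the support where the pairing is nonzero, which is exactly what makes axioms (iv) and (vii) applicable; everything else is formal manipulation of the nondegenerate bilinear form $\eta$.
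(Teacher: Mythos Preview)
Your argument is correct, and it takes a genuinely different route from the paper's proof. The paper verifies the three coalgebra axioms one at a time by working directly with the formula $\Delta=(\psi^{-1}\otimes\psi^{-1})\circ(m^{op})^\ast\circ\psi$: coassociativity is obtained by a formal duality computation (dualizing associativity of $m^{op}$), the counit property is checked by fixing an explicit basis of $A^\mathrm{x}_{g^{-1}}$ and of $A^\mathrm{x}_{e_\mathrm{x}}$ (with $v_1=\textbf{1}^\mathrm{x}_A$) and computing with the dual basis, and co-commutativity is proved via two auxiliary identities---one relating $\rho(\p{g}{x})\circ\psi^{-1}$ to $\psi^{-1}\circ\rho(\p{x^{-1}g^{-1}x}{x^{-1}})^\ast$, the other rewriting $(m^{op})^\ast\circ\psi$ in terms of $m^\ast\circ\psi$ and dual actions---followed by a chain of equalities on the level of dual maps.

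Your approach instead extracts a single \emph{defining identity} $\sum_i\eta(u_i,b)\,\eta(v_i,c)=\eta(a^\mathrm{x},\m{c}{b})$ from the definition of $\Delta$ and then proves all three axioms uniformly by pairing with $\eta$ and invoking nondegeneracy on each $A^\mathrm{x}$. This is more economical: it avoids basis choices, it makes the role of associativity and of axiom~(vii) completely transparent (they appear as the identities the pairings collapse to), and the observation that on the support of $\eta(u_i,c^\mathrm{x}_l)$ the twist $\varphi(h_i)$ equals $\varphi(l^{-1})$ is a clean way to extract the $i$-dependence needed to apply axioms~(iv) and~(vii). What the paper's approach buys is that it stays closer to the structural definition of $\Delta$ via $\psi$ and dual maps, which makes the passage back and forth between Frobenius objects and $\C{G}$-FAs (Propositions~\ref{GFO2GFA} and~\ref{FrobeniusRelation0}) visibly symmetric; your approach trades that structural symmetry for a shorter and more self-contained verification.
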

 \begin{proof}
 By Lemmas \ref{counitDkGLinearA} and \ref{coproductDkGLinear}, $\varepsilon$ and $\Delta$ are $D(k[\C{G}])$-linear.  We now verify that $\Delta$ and $\varepsilon$ satisfy the axioms of a co-commutative coalgebra.
 
 For the coassociativity of $\Delta$, we have
 \begin{align}
 (\Delta\otimes id_A)\circ \Delta &=\big[\big[(\psi^{-1}\otimes \psi^{-1})\circ (m^{op})^\ast\big]\otimes \psi^{-1}\big]\circ (m^{op})^\ast\circ \psi\\
 &=\left[\psi^{-1}\otimes \psi^{-1}\otimes \psi^{-1}\right]\circ\big[\big((m^{op})^\ast\otimes id_{A^\ast}\big)\circ (m^{op})^\ast\big]\circ \psi\\
 &=\left[\psi^{-1}\otimes \psi^{-1}\otimes \psi^{-1}\right]\circ\big[\big(id_{A^\ast}\otimes(m^{op})^\ast \big)\circ (m^{op})^\ast\big]\circ \psi\\
 &=\big[\psi^{-1}\otimes \big[(\psi^{-1}\otimes \psi^{-1})\circ (m^{op})^\ast\big]\big]\circ(m^{op})^\ast\circ\psi\\
 &=(id_A\otimes \Delta)\circ \Delta
 \end{align}
 where the third equality is a consequence of the fact that the opposite multiplication map $m^{op}$ of Lemma \ref{coproductDkGLinear} is associative.  
 
 For the counit property, we need to show  that
 \begin{equation}
\label{InducedCoalgebraObject1}
 l_A\circ(\varepsilon\otimes id_A)\circ \Delta(a)=a=r_A\circ(id_A\otimes \varepsilon)\circ \Delta(a)
 \end{equation}
 for all $a\in A$.  By linearity, it suffices to prove (\ref{InducedCoalgebraObject1}) for the case when $a=a^\mathrm{x}_g\in A^\mathrm{x}_g$.  If $a^\mathrm{x}_g$ is zero, there is nothing to prove.  So assume then that $a^\mathrm{x}_g\neq 0$ and let $\{u_j\}_{j=1}^n$ be a basis for $A^\mathrm{x}_{g^{-1}}$ and let $\{v_{i}\}_{i=1}^m$ be a basis for $A^\mathrm{x}_{e_\mathrm{x}}$ where $v_{1}$ is taken to be the projection of $\textbf{1}_A$ onto $A^\mathrm{x}$.  (As was shown in proof of Proposition \ref{InducedAlgebraObject}, $v_1$ is indeed an element of $A^\mathrm{x}_{e_\mathrm{x}}$ and is also the unit element of $A^\mathrm{x}$.)   Furthermore, let $\{u_j^\ast\}_{j=1}^n$ and $\{v_{i}^\ast\}_{i=1}^m$ denote the dual basis of $\{u_j\}_{j=1}^n$ and $\{v_{i}\}_{i=1}^m$ respectively (where an element $f$ in  $(A^\mathrm{y}_h)^\ast$ is also regarded as an element of $A^\ast$ by extending the definition of $f$ via $f(a^\mathrm{z}_l)=\delta_{h,l}f(a^\mathrm{z}_l)$).
 
 By part (i) of Lemma \ref{PsiMap}, we have
\begin{equation}
\label{InducedCoalgebraObject2}
 \psi(a^\mathrm{x}_g)=\sum_{j=1}^n\alpha_ju_j^\ast
 \end{equation}
 where $\alpha_j=\psi(a^\mathrm{x}_g)(u_j)$.  In addition, by part (i) of Lemmas \ref{PsiMap} and \ref{coproductDkGLinear} we can also express $(m^{op})^\ast\circ \psi(a^\mathrm{x}_g)$ as
\begin{equation}
\label{InducedCoalgebraObject3}
(m^{op})^\ast\circ \psi(a^\mathrm{x}_g)=\sum_{i,j}\alpha_{ij}v_i^\ast\otimes u_j^\ast+\omega\in \bigoplus_{g_1g_2=g} (A^\mathrm{x}_{g_1^{-1}})^\ast\otimes_k (A^\mathrm{x}_{g_2^{-1}})^\ast
 \end{equation} 
where $\alpha_{ij}=\psi(a^\mathrm{x}_g)(\m{u_j}{v_i})$ and 
\begin{equation}
\nonumber
\omega\in \bigoplus_{g_1g_2=g, ~g_1\neq e_\mathrm{x}} (A^\mathrm{x}_{g_1^{-1}})^\ast\otimes_k (A^\mathrm{x}_{g_2^{-1}})^\ast.
\end{equation}
In particular, note that $\alpha_j=\alpha_{1j}$.

Next,  note that for $f\in (A^\mathrm{x}_h)^\ast$, we have
\begin{align}
\nonumber
\varepsilon\circ \psi^{-1}(f)&=\eta(\psi^{-1}(f),\textbf{1}_A)~1^\mathrm{x}\\
\nonumber
&=\psi(\psi^{-1}(f))(\textbf{1}_A)~1^\mathrm{x}\\
\nonumber
&=f(\textbf{1}_A)~1^\mathrm{x}\\
\label{InducedCoalgebraObject4}
&=\delta_{h,e_\mathrm{x}} f(\textbf{1}_A)~1^\mathrm{x}.
\end{align}
Applying (\ref{InducedCoalgebraObject3}) and (\ref{InducedCoalgebraObject4}) to the first half of (\ref{InducedCoalgebraObject1}) gives
\begin{align}
 l_A\circ(\varepsilon\otimes id_A)\circ \Delta(a^\mathrm{x}_g)&=l_A\circ(\varepsilon\circ\psi^{-1}\otimes \psi^{-1})\circ (m^{op})^\ast\circ \psi(a^\mathrm{x}_g)\\
 &=\sum_{i,j}\alpha_{ij}~v_i^\ast(\textbf{1}_A)\psi^{-1}(u_j^\ast)\\
 &=\sum_{i,j}\alpha_{ij}~v_i^\ast(v_1)\psi^{-1}(u_j^\ast)\\
 &=\sum_{i,j}\alpha_{1j}\psi^{-1}(u_j^\ast)\\
 &=\sum_{i,j}\alpha_{j}\psi^{-1}(u_j^\ast)\\
 &=a^\mathrm{x}_g.   
\end{align}
 The proof of the other half of (\ref{InducedCoalgebraObject1}) is entirely similar.
 
 Lastly, for co-commutativity, we need to show that
 \begin{equation}
 \label{InducedCoalgebraObject5}
 c_{A,A}\circ \Delta(a)=\Delta(a)\hspace*{0.2in}\forall~a\in A.
 \end{equation}
 Again, by linearity, it suffices to prove (\ref{InducedCoalgebraObject5}) for the case when $a=a^\mathrm{x}_g\in A^\mathrm{x}_g$.  To start, note that by applying $\psi^{-1}$ to both sides of part (iii) of Lemma \ref{PsiMap}  (and using the definition of $\rho$), it follows that 
  \begin{equation}
 \label{InducedCoalgebraObject6}
 \rho(\p{g}{x})\circ\psi^{-1}=\psi^{-1}\circ\rho(\p{x^{-1}g^{-1}x}{x^{-1}})^\ast.
 \end{equation}
 Next, note that if $b^\mathrm{x}_{g_1^{-1}}\in A^\mathrm{x}_{g_1^{-1}}$ and $c^\mathrm{x}_{g_2^{-1}}\in A^\mathrm{x}_{g_2^{-1}}$ with $g_1g_2=g$, then
 \begin{align}
 \nonumber
 (m^{op})^\ast\circ \psi(a^\mathrm{x}_g)(b^\mathrm{x}_{g_1^{-1}}\otimes c^\mathrm{x}_{g_2^{-1}})&=\psi(a^\mathrm{x}_g)(\m{c^\mathrm{x}_{g_2^{-1}}}{b^\mathrm{x}_{g_1^{-1}}})\\
 \nonumber
 &=\psi(a^\mathrm{x}_g)(\m{(\varphi(g_2^{-1})b^\mathrm{x}_{g_1^{-1}})}{c^\mathrm{x}_{g_2^{-1}}})\\
 \label{InducedCoalgebraObject7}
 &=\psi(a^\mathrm{x}_g)(\m{(\rho(\p{g_2^{-1}g_1^{-1}g_2}{g_2^{-1}})b^\mathrm{x}_{g_1^{-1}})}{(\rho(\p{g_2^{-1}}{e_\mathrm{x}})c^\mathrm{x}_{g_2^{-1}}}))
 \end{align} 
 where the second equality follows from axiom (vii) of Definition \ref{DefGrpdFA} and the third equality follows directly from the definition of $\rho$.  (\ref{InducedCoalgebraObject7}) then implies that
 \begin{equation}
 \label{InducedCoalgebraObject8}
 (m^{op})^\ast\circ \psi(a^\mathrm{x}_g)=\left[\sum_{g_1g_2=g}\rho(\p{g_2^{-1}}{g_1^{-1}})^\ast\otimes \rho(\p{g_1^{-1}}{e_\mathrm{x}})^\ast \right]\circ m^\ast\circ\psi(a^\mathrm{x}_g).
 \end{equation}
 
 Now let $\tau:A\widehat{\otimes}A\rightarrow A\widehat{\otimes}A$ be the $k$-linear map defined by $\tau(a^\mathrm{y}\otimes b^\mathrm{y}):=b^\mathrm{y}\otimes a^\mathrm{y}$.  The proof of (\ref{InducedCoalgebraObject5}) then follows from (\ref{InducedCoalgebraObject6}) and (\ref{InducedCoalgebraObject8}):
 \begin{align}
\nonumber
c_{A,A}\circ \Delta(a^\mathrm{x}_g)&=\tau\circ\left[\left(\sum_{\mathrm{y}\in\C{G}_0}\sum_{h,l\in\Gamma^\mathrm{y}}\rho(\p{h}{e_\mathrm{y}})\circ \psi^{-1}\otimes\rho(\p{l}{h})\circ\psi^{-1}\right)\circ (m^{op})^\ast\circ\psi(a^\mathrm{x}_g)\right]\\
\nonumber
&=\tau\circ\left[\left(\sum_{g_1g_2=g}\rho(\p{g_1}{e_\mathrm{x}})\circ \psi^{-1}\otimes\rho(\p{g_1g_2g_1^{-1}}{g_1})\circ\psi^{-1}\right)\circ (m^{op})^\ast \circ\psi(a^\mathrm{x}_g)\right]\\
\nonumber
&=\tau\circ\left[\left(\sum_{g_1g_2=g}\psi^{-1}\circ\rho(\p{g_1^{-1}}{e_\mathrm{x}})^\ast\otimes\psi^{-1}\circ\rho(\p{g_2^{-1}}{g_1^{-1}})^\ast\right)\circ (m^{op})^\ast\circ\psi(a^\mathrm{x}_g)\right]\\
\nonumber
&=(\psi^{-1}\otimes\psi^{-1})\circ\left[\left(\sum_{g_1g_2=g}\rho(\p{g_2^{-1}}{g_1^{-1}})^\ast\otimes\rho(\p{g_1^{-1}}{e_\mathrm{x}})^\ast\right)\circ m^\ast\circ\psi(a^\mathrm{x}_g)\right]\\
\nonumber
&=(\psi^{-1}\otimes\psi^{-1})\circ (m^{op})^\ast\circ\psi(a^\mathrm{x}_g)\\
\nonumber
&=\Delta(a^\mathrm{x}_g)
 \end{align}
 where the third equality follows from (\ref{InducedCoalgebraObject6}) and the fifth equality follows from (\ref{InducedCoalgebraObject8}). 
 \end{proof}
 \noindent The next two lemmas will be used to show that the algebra and coalgebra objects given by Propositions \ref{InducedAlgebraObject} and \ref{InducedCoalgebraObject} satisfy the Frobenius relations (equations (\ref{FrobeniusAxiom1}) and (\ref{FrobeniusAxiom2})).
 \begin{lemma}
 \label{Phi_gInv}
 Suppose $<\C{G},(A,\m{}{},\textbf{1}_A),\eta,\varphi>$ is a $\C{G}$-FA .  Then
   \begin{itemize}
 \item[(i)] $\varphi(g^{-1})|_{A^\mathrm{x}_g}=id_{A^\mathrm{x}_g}$,
  \item[(ii)] $\m{a^\mathrm{x}_g}{b^\mathrm{x}_{g^{-1}}}=\m{b^\mathrm{x}_{g^{-1}}}{a^\mathrm{x}_g}$ for all $a^\mathrm{x}_g\in A^\mathrm{x}_g$, $b^\mathrm{x}_{g^{-1}}\in A^\mathrm{x}_{g^{-1}}$, and
  \item[(iii)] $\eta$ is symmetric
 \end{itemize}
 for all $\mathrm{x}\in \C{G}_0$, $g\in \Gamma^\mathrm{x}$.  
 \end{lemma}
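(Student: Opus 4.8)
The three assertions can be proved in the stated order, each from the axioms of Definition~\ref{DefGrpdFA} together with the preceding parts; none of the arguments is deep, so this is essentially a matter of bookkeeping with the $\Gamma^\mathrm{x}$-grading.

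For (i), observe that $g\in\Gamma^\mathrm{x}$ implies $g^{-1}\in\Gamma^\mathrm{x}$, so $\varphi(g^{-1})$ is a $k$-linear endomorphism of $A^\mathrm{x}$, and by axiom (v) it maps $A^\mathrm{x}_g$ into $A^\mathrm{x}_{g^{-1}g(g^{-1})^{-1}}=A^\mathrm{x}_g$; likewise axiom (viii) (or (v)) gives $\varphi(g)(A^\mathrm{x}_g)\subseteq A^\mathrm{x}_g$. Since $\varphi$ is a $\C{G}$-action, $\varphi(g^{-1})\circ\varphi(g)=\varphi(g^{-1}g)=\varphi(e_\mathrm{x})=id_{A^\mathrm{x}}$. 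Restricting this identity to $A^\mathrm{x}_g$ and substituting $\varphi(g)|_{A^\mathrm{x}_g}=id_{A^\mathrm{x}_g}$ (axiom (viii)) collapses the left-hand side to $\varphi(g^{-1})|_{A^\mathrm{x}_g}$, which is therefore the identity.

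For (ii), apply axiom (vii) with $h=g^{-1}$ to get $\m{a^\mathrm{x}_g}{b^\mathrm{x}_{g^{-1}}}=\m{(\varphi(g)b^\mathrm{x}_{g^{-1}})}{a^\mathrm{x}_g}$; now invoke part (i) with $g$ replaced by $g^{-1}$, which yields $\varphi(g)|_{A^\mathrm{x}_{g^{-1}}}=id_{A^\mathrm{x}_{g^{-1}}}$, so $\varphi(g)b^\mathrm{x}_{g^{-1}}=b^\mathrm{x}_{g^{-1}}$ and the stated commutation follows.

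For (iii), by bilinearity and the decomposition $A=\bigoplus_{\mathrm{x}\in\C{G}_0}\bigoplus_{g\in\Gamma^\mathrm{x}}A^\mathrm{x}_g$ it suffices to show $\eta(a^\mathrm{x}_g,b^\mathrm{y}_h)=\eta(b^\mathrm{y}_h,a^\mathrm{x}_g)$ for homogeneous $a^\mathrm{x}_g\in A^\mathrm{x}_g$ and $b^\mathrm{y}_h\in A^\mathrm{y}_h$. If $\mathrm{x}\neq\mathrm{y}$, then using $\m{\textbf{1}_A}{a^\mathrm{x}_g}=a^\mathrm{x}_g$ and axiom (iii) gives $\eta(a^\mathrm{x}_g,b^\mathrm{y}_h)=\eta(\textbf{1}_A,\m{a^\mathrm{x}_g}{b^\mathrm{y}_h})=0$ by axiom (ii), and symmetrically $\eta(b^\mathrm{y}_h,a^\mathrm{x}_g)=0$. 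If $\mathrm{x}=\mathrm{y}$ but $h\neq g^{-1}$, then $gh\neq e_\mathrm{x}$ (equivalently $hg\neq e_\mathrm{x}$ in the group $\Gamma^\mathrm{x}$), so both $\eta(a^\mathrm{x}_g,b^\mathrm{x}_h)$ and $\eta(b^\mathrm{x}_h,a^\mathrm{x}_g)$ vanish by axiom (vi). Finally, if $\mathrm{x}=\mathrm{y}$ and $h=g^{-1}$, axiom (iii) again gives $\eta(a^\mathrm{x}_g,b^\mathrm{x}_{g^{-1}})=\eta(\textbf{1}_A,\m{a^\mathrm{x}_g}{b^\mathrm{x}_{g^{-1}}})$ and $\eta(b^\mathrm{x}_{g^{-1}},a^\mathrm{x}_g)=\eta(\textbf{1}_A,\m{b^\mathrm{x}_{g^{-1}}}{a^\mathrm{x}_g})$, and these coincide by part (ii). Hence $\eta$ is symmetric.

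The only place that requires any care is part (iii): one must use axiom (vi) (nondegeneracy of $\eta$ precisely on the pairs $A^\mathrm{x}_g\times A^\mathrm{x}_{g^{-1}}$) together with the reduction $\eta(\cdot,\cdot)=\eta(\textbf{1}_A,\m{\cdot}{\cdot})$ to isolate the single case $h=g^{-1}$ before applying (ii). Parts (i) and (ii) are one-line consequences of axioms (viii), (v), and (vii) and present no obstacle.
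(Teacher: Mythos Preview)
Your proof is correct. Parts (i) and (ii) are exactly the paper's argument.

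For part (iii) you take a slightly different route than the paper. The paper does not split into cases: it writes
\[
\eta(a^\mathrm{x}_g,b^\mathrm{x}_h)=\eta(\m{a^\mathrm{x}_g}{b^\mathrm{x}_h},\textbf{1}_A)=\eta(\m{(\varphi(g)b^\mathrm{x}_h)}{a^\mathrm{x}_g},\textbf{1}_A)=\eta(\varphi(g)b^\mathrm{x}_h,a^\mathrm{x}_g),
\]
then applies axiom (iv) (the $\varphi$-invariance of $\eta$) to get $\eta(b^\mathrm{x}_h,\varphi(g^{-1})a^\mathrm{x}_g)$, and finishes with part (i). This handles arbitrary $h\in\Gamma^\mathrm{x}$ in one stroke, using axioms (iii), (vii), (iv) and part (i), whereas you use axioms (ii), (iii), (vi) and part (ii) together with a three-way case split. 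Both arguments are sound; the paper's is a bit shorter and emphasises that symmetry of $\eta$ comes directly from twisted commutativity (vii) plus $\varphi$-invariance (iv), while yours makes the role of the grading support (axiom (vi)) more explicit.
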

 \begin{proof}
 Part (i) follows immediately from axiom (viii) of Definition \ref{DefGrpdFA} and the fact that $\varphi(e_\mathrm{x})=id_{A^\mathrm{x}}$.  Part (ii) then follows from part (i) of Lemma \ref{Phi_gInv} and axiom (vii) of Definition \ref{DefGrpdFA}.  For (iii), we have
 \begin{align}
 \nonumber
 \eta(a^\mathrm{x}_g,b^\mathrm{x}_h)&=\eta(\m{a^\mathrm{x}_g}{b^\mathrm{x}_h},\textbf{1}_A)\\
 \nonumber
 &=\eta(\m{(\varphi(g)b^\mathrm{x}_h)}{a^\mathrm{x}_g},\textbf{1}_A)\\
 \nonumber
 &=\eta(\varphi(g)b^\mathrm{x}_h,a^\mathrm{x}_g)\\
 \nonumber
 &=\eta(b^\mathrm{x}_h,\varphi(g^{-1})a^\mathrm{x}_g)\\
 \nonumber
 &=\eta(b^\mathrm{x}_h,a^\mathrm{x}_g)
 \end{align}
 where the fourth equality follows from axiom (iv) of Definition \ref{DefGrpdFA} and the last equality follows from part (i) of Lemma \ref{Phi_gInv}.
 \end{proof}
 \begin{lemma}
 \label{uivi}
 Suppose $<\C{G},(A,\m{}{},\textbf{1}_A),\eta,\varphi>$ is a $\C{G}$-FA and $\{u_i\}$ is any basis of $A^\mathrm{x}$ where $u_i\in A^\mathrm{x}_{g_i}$ for some $g_i\in \Gamma^\mathrm{x}$.  Let
 \begin{equation}
\widehat{u}_i:=\psi^{-1}(u_i^\ast)
 \end{equation}
 where $\{u_i^\ast\}$ is the dual basis of $\{u_i\}$.   Then
\begin{itemize}
\item[(i)] $\widehat{u}_i\in A^\mathrm{x}_{g_i^{-1}}$;
\item[(ii)] $\{\widehat{u}_i\}$ is a basis of $A^\mathrm{x}$; and
\item[(iii)] $\psi(u_i)=\widehat{u}_i^\ast$.
\end{itemize} 
 \end{lemma}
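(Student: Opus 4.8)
The plan is to deduce all three statements directly from Lemma \ref{PsiMap}, the grading $A^\mathrm{x}=\bigoplus_{g\in\Gamma^\mathrm{x}}A^\mathrm{x}_g$, and the symmetry of $\eta$ established in Lemma \ref{Phi_gInv}(iii). The one point worth isolating first is that the dual functional $u_i^\ast$ is itself homogeneous. Since $\{u_i\}$ is a basis of $A^\mathrm{x}$ each of whose members lies in a single homogeneous component $A^\mathrm{x}_{g_i}$, the subset $\{u_j~|~g_j=h\}$ is necessarily a basis of $A^\mathrm{x}_h$ for every $h\in\Gamma^\mathrm{x}$. Hence for $h\neq g_i$ the functional $u_i^\ast$ annihilates $A^\mathrm{x}_h$ (it kills every $u_j$ with $g_j=h$, all of which have $j\neq i$), and it annihilates $A^\mathrm{y}$ for $\mathrm{y}\neq\mathrm{x}$ by convention; so, viewing $u_i^\ast$ as an element of $A^\ast$ extended by zero off $A^\mathrm{x}$, we have $u_i^\ast\in (A^\mathrm{x}_{g_i})^\ast\subset A^\ast$.

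Statement (i) is then immediate: by Lemma \ref{PsiMap}(i), $\psi$ restricts to a vector space isomorphism $A^\mathrm{x}_{g_i^{-1}}\stackrel{\sim}{\longrightarrow}(A^\mathrm{x}_{g_i})^\ast$, so $\widehat{u}_i=\psi^{-1}(u_i^\ast)$ lies in $A^\mathrm{x}_{g_i^{-1}}$. For (ii), note that $\psi$ carries $A^\mathrm{x}=\bigoplus_{g\in\Gamma^\mathrm{x}}A^\mathrm{x}_g$ onto $\bigoplus_{g\in\Gamma^\mathrm{x}}(A^\mathrm{x}_{g^{-1}})^\ast=(A^\mathrm{x})^\ast$ by Lemma \ref{PsiMap}(i), so $\psi$ restricts to an isomorphism $A^\mathrm{x}\stackrel{\sim}{\longrightarrow}(A^\mathrm{x})^\ast$; since $\{u_i^\ast\}$ is a basis of $(A^\mathrm{x})^\ast$, its image $\{\widehat{u}_i\}$ under $\psi^{-1}$ is a basis of $A^\mathrm{x}$.

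For (iii), I would compute the pairing of $\psi(u_i)$ against the basis $\{\widehat{u}_j\}$ of $A^\mathrm{x}$. Using the definition of $\psi$, the symmetry of $\eta$, and $\psi(\widehat{u}_j)=\psi(\psi^{-1}(u_j^\ast))=u_j^\ast$, one gets $\psi(u_i)(\widehat{u}_j)=\eta(u_i,\widehat{u}_j)=\eta(\widehat{u}_j,u_i)=\psi(\widehat{u}_j)(u_i)=u_j^\ast(u_i)=\delta_{ij}$. Since $\psi(u_i)\in(A^\mathrm{x})^\ast$ (it annihilates $A^\mathrm{y}$ for $\mathrm{y}\neq\mathrm{x}$ by Lemma \ref{PsiMap}(i)) and $\{\widehat{u}_j\}$ is a basis of $A^\mathrm{x}$, this forces $\psi(u_i)$ to be the dual basis vector $\widehat{u}_i^\ast$. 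There is no serious obstacle here; the only care needed is in tracking the extension-by-zero identifications of the spaces $(A^\mathrm{x}_h)^\ast$ inside $A^\ast$ and in using the compatibility of the chosen basis with the grading to see that $u_i^\ast$ has the expected homogeneous degree.
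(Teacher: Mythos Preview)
Your proof is correct and follows essentially the same approach as the paper: parts (i) and (ii) are derived from Lemma~\ref{PsiMap}, and part (iii) is the identical computation $\psi(u_i)(\widehat{u}_j)=\eta(u_i,\widehat{u}_j)=\eta(\widehat{u}_j,u_i)=\psi(\widehat{u}_j)(u_i)=u_j^\ast(u_i)=\delta_{ij}$ using the symmetry of $\eta$ from Lemma~\ref{Phi_gInv}(iii). Your explicit isolation of the homogeneity of $u_i^\ast$ is a helpful clarification that the paper leaves implicit.
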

 \begin{proof}
 Parts (i) and (ii) are both immediate consequences of Lemma \ref{PsiMap}. By part (iii) of Lemma \ref{Phi_gInv}, we also have 
\begin{align}
\nonumber
\psi(u_i)(\widehat{u}_j)&=\eta(u_i,\widehat{u}_j)\\
\nonumber
&=\eta(\widehat{u}_j,u_i)\\
\nonumber
&=\psi(\widehat{u}_j)(u_i)\\
\nonumber
&=u_j^\ast(u_i)\\
\nonumber
&=\delta_{ij},
\end{align}
which proves (iii).
\end{proof}
\noindent The last two results of this section will be used shortly to establish the first half of Theorem \ref{MainTheorem}.
 \begin{proposition}
 \label{FrobeniusRelation0}
 Suppose $<\C{G},(A,\m{}{},\textbf{1}_A),\eta,\varphi>$ is a $\C{G}$-FA and $((\rho,A),m,\mu)$ and $((\rho,A),\Delta,\varepsilon)$ are the algebra and coalgebra objects given respectively in Propositions \ref{InducedAlgebraObject} and \ref{InducedCoalgebraObject}.  Then $((\rho,A),m,\Delta,\mu,\varepsilon)$ is a Frobenius object in $\mbox{Rep}(D(k[\mathcal{G}]))$.
 \end{proposition}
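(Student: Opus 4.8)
The plan is as follows. By Propositions \ref{InducedAlgebraObject} and \ref{InducedCoalgebraObject}, $((\rho,A),m,\mu)$ is already a commutative algebra object and $((\rho,A),\Delta,\varepsilon)$ a co-commutative coalgebra object in $\mbox{Rep}(D(k[\C{G}]))$, so the only thing left to verify is the pair of Frobenius relations (\ref{FrobeniusAxiom1}) and (\ref{FrobeniusAxiom2}); since the associator of $\mbox{Rep}(D(k[\C{G}]))$ is trivial, the $\Phi^{\pm 1}$ may be dropped. Because $m$ and $\Delta$ are $D(k[\C{G}])$-linear (Proposition \ref{InducedAlgebraObject} and Lemma \ref{coproductDkGLinear}), it suffices to check these as identities of $k$-linear maps $A\widehat{\otimes} A\to A\widehat{\otimes} A$, and since $A\widehat{\otimes} A=\bigoplus_{\mathrm{x}\in\C{G}_0}A^\mathrm{x}\otimes_k A^\mathrm{x}$ by Lemma \ref{tensor_product2a}, it is enough to evaluate both sides on an arbitrary $a^\mathrm{x}\otimes b^\mathrm{x}$ with $a^\mathrm{x},b^\mathrm{x}\in A^\mathrm{x}$.

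The first step is to produce a workable formula for $\Delta$. Fix $\mathrm{x}$ and a homogeneous basis $\{u_i\}$ of $A^\mathrm{x}$, say $u_i\in A^\mathrm{x}_{g_i}$, and let $\widehat{u}_i:=\psi^{-1}(u_i^\ast)$ be the associated basis from Lemma \ref{uivi}, so that $\widehat{u}_i\in A^\mathrm{x}_{g_i^{-1}}$ and $\eta(u_i,\widehat{u}_j)=\delta_{ij}$. Using symmetry of $\eta$ (part (iii) of Lemma \ref{Phi_gInv}) one gets the reconstruction identities $w=\sum_i\eta(u_i,w)\widehat{u}_i=\sum_i\eta(w,u_i)\widehat{u}_i$ for $w\in A^\mathrm{x}$. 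Unwinding the definition $\Delta=(\psi^{-1}\otimes\psi^{-1})\circ(m^{op})^\ast\circ\psi$ of Lemma \ref{coproductDkGLinear} on the dual basis $\{u_i^\ast\otimes u_j^\ast\}$ of $(A^\mathrm{x}\otimes_k A^\mathrm{x})^\ast$, and then contracting one of the two indices with the reconstruction identities and axiom (iii) of Definition \ref{DefGrpdFA}, I expect to obtain the two expressions
\[
\Delta(c)=\sum_i\m{c}{u_i}\otimes\widehat{u}_i=\sum_i\widehat{u}_i\otimes\m{u_i}{c},\qquad c\in A^\mathrm{x};
\]
the degree count (each summand has total $\Gamma^\mathrm{x}$-degree $g$ when $c\in A^\mathrm{x}_g$) matches part (i) of Lemma \ref{coproductDkGLinear}.

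The key lemma is a ``moving'' identity: for every $x\in A^\mathrm{x}$,
\[
\sum_i\m{x}{u_i}\otimes\widehat{u}_i=\sum_i u_i\otimes\m{\widehat{u}_i}{x},
\]
together with its twin obtained by interchanging the bases $\{u_i\}$ and $\{\widehat{u}_i\}$ (legitimate since, by part (iii) of Lemma \ref{uivi}, $\{\widehat{u}_i\}$ is again a homogeneous basis to which the same construction applies). I would prove this by pairing the first tensor factor against the arbitrary functional $\psi(z)$, $z\in A^\mathrm{x}$ --- these separate points of $A^\mathrm{x}$ because $\psi$ is an isomorphism --- and verifying that both sides evaluate to $\m{z}{x}$, using $\eta(z,\m{x}{u_i})=\eta(\m{z}{x},u_i)$ (axiom (iii) of Definition \ref{DefGrpdFA}) on the left-hand side, $\sum_i\eta(z,u_i)\widehat{u}_i=z$ on the right-hand side, and the reconstruction identities throughout. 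Granting the moving lemma and the two formulas for $\Delta$, both Frobenius relations become bookkeeping: for (\ref{FrobeniusAxiom2}) one writes $\Delta(\m{a^\mathrm{x}}{b^\mathrm{x}})=\sum_j\m{a^\mathrm{x}}{(\m{b^\mathrm{x}}{u_j})}\otimes\widehat{u}_j$ using associativity of $\m{}{}$, and then applies $\m{a^\mathrm{x}}{(-)}\otimes id_A$ to the moving lemma with $x=b^\mathrm{x}$ to land on $\sum_j\m{a^\mathrm{x}}{u_j}\otimes\m{\widehat{u}_j}{b^\mathrm{x}}=(id_A\otimes m)\circ(\Delta\otimes id_A)(a^\mathrm{x}\otimes b^\mathrm{x})$; for (\ref{FrobeniusAxiom1}) one uses instead the formula $\Delta(c)=\sum_i\widehat{u}_i\otimes\m{u_i}{c}$, writes $\Delta(\m{a^\mathrm{x}}{b^\mathrm{x}})=\sum_j\widehat{u}_j\otimes\m{(\m{u_j}{a^\mathrm{x}})}{b^\mathrm{x}}$, and applies $id_A\otimes\m{(-)}{b^\mathrm{x}}$ to the twin moving lemma with $x=a^\mathrm{x}$ to reach $\sum_j\m{a^\mathrm{x}}{\widehat{u}_j}\otimes\m{u_j}{b^\mathrm{x}}=(m\otimes id_A)\circ(id_A\otimes\Delta)(a^\mathrm{x}\otimes b^\mathrm{x})$.

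I expect the only real difficulty to be organizational rather than conceptual: the underlying ``algebra'' is merely braided-commutative (axiom (vii) of Definition \ref{DefGrpdFA}), so one must track the order of the factors inside every occurrence of $\m{}{}$ and keep careful account of which $\Gamma^\mathrm{x}$-degree sits in which tensor slot; choosing the correct one of the two formulas for $\Delta$ at each step, and invoking symmetry of $\eta$ at the right moment, is exactly what makes the cancellations go through. Beyond this graded, braided-commutative bookkeeping, the argument is just the classical copairing (``Casimir'') proof that a finite-dimensional algebra carrying a nondegenerate invariant bilinear form is Frobenius.
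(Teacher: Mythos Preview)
Your proposal is correct and follows essentially the same strategy as the paper: both arguments introduce the homogeneous dual bases $\{u_i\}$ and $\{\widehat u_i\}$ of Lemma~\ref{uivi}, extract explicit formulas for $\Delta$ in terms of these bases, and then reduce the Frobenius relations to the invariance $\eta(\m{a}{b},c)=\eta(a,\m{b}{c})$ (axiom (iii)) together with symmetry of $\eta$ (Lemma~\ref{Phi_gInv}). The only difference is packaging. The paper writes $\Delta(a^\mathrm{x})=\sum_{l,m}\eta(a^\mathrm{x},\m{\widehat u_m}{\widehat u_l})\,u_l\otimes u_m$, introduces structure constants $C^t_{ij}$, and proves the key identity $\m{\widehat u_l}{u_i}=\sum_s C^l_{is}\widehat u_s$ (equation~(\ref{uHATuProduct})) before computing $\Delta(\m{u_i}{u_j})$ coordinate by coordinate; you instead contract one index to get the closed forms $\Delta(c)=\sum_i\m{c}{u_i}\otimes\widehat u_i=\sum_i\widehat u_i\otimes\m{u_i}{c}$ and isolate the coordinate-free ``moving'' identity $\sum_i\m{x}{u_i}\otimes\widehat u_i=\sum_i u_i\otimes\m{\widehat u_i}{x}$. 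Your moving lemma is exactly what (\ref{uHATuProduct}) encodes once one sums against the basis, so the two computations are equivalent; your version is a bit cleaner and makes more transparent that only associativity of $\m{}{}$ and invariance of $\eta$ are used (the braided commutativity you worry about is not actually needed for the Frobenius relations themselves).
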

 \begin{proof}
 The only thing we have left to check are the Frobenius relations:
 \begin{align}
 \label{FrobeniusRelation1}
\Delta\circ m&=(m\otimes id_A)\circ (id_A\otimes \Delta)\\
 \label{FrobeniusRelation2}
 \Delta\circ m&=(id_A\otimes m)\circ (\Delta\otimes id_A)
 \end{align}
 
 To start, let $\{u_i\}$ be any basis of $A^\mathrm{x}$ where $u_i\in A^\mathrm{x}_{g_i}$ for some $g_i\in \Gamma^\mathrm{x}$ and let $\{\widehat{u}_i\}$ be the basis given by Lemma \ref{uivi}.  Then 
 \begin{align}
 \m{u_i}{u_j}&=\sum_{t}C^t_{ij}u_t\\
  \m{\widehat{u}_l}{\widehat{u}_m}&=\sum_{t}\widehat{C}^t_{lm}\widehat{u}_t
 \end{align}
 for some $C^t_{ij},~\widehat{C}^t_{lm}\in k$ where we note that $C^t_{ij}=0$ if $g_t\neq g_ig_j$ and $\widehat{C}^t_{lm}=0$ if $g_t\neq g_mg_l$.  Next, note that
 \begin{equation}
 \label{uHATuProduct}
 \m{\widehat{u}_l}{u_i}=\sum_s C^l_{is}\widehat{u}_s.
 \end{equation}
(\ref{uHATuProduct}) follows from the fact that if $\alpha^s\in k$ is the scalar multiplying $\widehat{u}_s$ then 
\begin{align}
\nonumber
\alpha^s&=\widehat{u}_s^\ast(\m{\widehat{u}_l}{u_i})\\
\nonumber
&=\psi(u_s)(\m{\widehat{u}_l}{u_i})\\
\nonumber
&=\eta(u_s,\m{\widehat{u}_l}{u_i})\\
\nonumber
&=\eta(\m{\widehat{u}_l}{u_i},u_s)\\
\nonumber
&=\eta(\widehat{u}_l,\m{u_i}{u_s})\\
\nonumber
&=\sum_{t} C_{is}^t\eta(\widehat{u}_l,u_t)\\
\nonumber
&=\sum_{t} C_{is}^t \psi(\widehat{u}_l)(u_t)\\
\nonumber
&=\sum_{t} C_{is}^t u_l^\ast(u_t)\\
\nonumber
&=C_{is}^l.
\end{align}
We now prove (\ref{FrobeniusRelation1}).  (The proof of (\ref{FrobeniusRelation2}) is similar.)  To do this, it suffices to show that 
\begin{equation}
\label{FrobeniusRelation3}
 \Delta(\m{u_i}{u_j})=(m\otimes id_A)\circ (id_A\otimes \Delta)(u_i\otimes u_j).
\end{equation}
It follows from the definition of $\Delta$ as well as that of $\{u_i\}$ and $\{\widehat{u}_i\}$ that
 \begin{equation}
\Delta(a^\mathrm{x})=\sum_{l,m}\eta(a^\mathrm{x},\m{\widehat{u}_m}{\widehat{u}_l})~u_l\otimes u_m.
 \end{equation}
 Hence, the right side of (\ref{FrobeniusRelation3}) is 
 \begin{equation}
 \label{FrobeniusRelation4}
 \sum_{l,m}\eta(u_j,\m{\widehat{u}_m}{\widehat{u}_l})~(\m{u_i}{u_l})\otimes u_m.
 \end{equation}
 Computing the left side of (\ref{FrobeniusRelation3}) gives
 \begin{align}
 \nonumber
 \Delta(\m{u_i}{u_j})&=\sum_{l,m}\eta(\m{u_i}{u_j},\m{\widehat{u}_m}{\widehat{u}_l})~u_l\otimes u_m\\
 \nonumber
 &=\sum_{l,m}\eta(\m{\widehat{u}_m}{\widehat{u}_l},\m{u_i}{u_j})~u_l\otimes u_m\\
 \nonumber
 &=\sum_{l,m}\eta(\m{\widehat{u}_m}{(\m{\widehat{u}_l}{u_i})},{u_j})~u_l\otimes u_m\\
 \nonumber
 &=\sum_{l,m}\sum_s C^l_{is}\eta(\m{\widehat{u}_m}{\widehat{u}_s},{u_j})~u_l\otimes u_m\\
 \nonumber
&=\sum_{m,s} \eta(\m{\widehat{u}_m}{\widehat{u}_s},{u_j})~\left(\sum_lC^l_{is}u_l\right)\otimes u_m\\
\nonumber
%\displaybreak
&=\sum_{m,s} \eta(\m{\widehat{u}_m}{\widehat{u}_s},{u_j})~(\m{u_i}{u_s})\otimes u_m.\\
\nonumber
&=\sum_{m,s} \eta(u_j,\m{\widehat{u}_m}{\widehat{u}_s})~(\m{u_i}{u_s})\otimes u_m.
 \end{align}
 Comparing the last line of the above calculation with (\ref{FrobeniusRelation4}) shows that the left and right sides of (\ref{FrobeniusRelation3}) are indeed equal.
 \end{proof}
 
 \begin{proposition}
 \label{LastResult}
 Suppose $<\C{G},(A,\m{}{},\textbf{1}_A),\eta,\varphi>$ is a $\C{G}$-FA and  $((\rho,A),m,\Delta,\mu,\varepsilon)$ is the Frobenius object of Proposition \ref{FrobeniusRelation0}.  Then $((\rho,A),m,\Delta,\mu,\varepsilon)$ satisfies conditions (1) and (2) of Theorem \ref{MainTheorem}.
 \end{proposition}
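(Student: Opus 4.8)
The plan is to translate conditions (1) and (2) of Theorem \ref{MainTheorem} directly into axioms (viii) and (ix) of Definition \ref{DefGrpdFA}, using only the explicit form of the induced action $\rho(\p{g}{x})a^\mathrm{y}_h=\delta_{h,x^{-1}gx}\,\varphi(x)a^\mathrm{y}_h$ from Proposition \ref{InducedDkGResult} together with the grading behaviour of the $\C{G}$-FA product recorded in Proposition \ref{ProductResult1}.

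For condition (1), I would first compute, for $\mathrm{x}\in\C{G}_0$ and $g\in\Gamma^\mathrm{x}$, that $\rho(\p{g}{g})a^\mathrm{y}_h=\delta_{\mathrm{x},\mathrm{y}}\,\delta_{g,h}\,\varphi(g)a^\mathrm{x}_g$, since $g^{-1}gg=g$ and $\varphi(g)$ is defined only on $A^\mathrm{x}=A^{s(g)}$. By axiom (viii) of Definition \ref{DefGrpdFA}, $\varphi(g)|_{A^\mathrm{x}_g}=id_{A^\mathrm{x}_g}$, so $\rho(\p{g}{g})$ is exactly the projection of $A=\bigoplus_{\mathrm{z}\in\C{G}_0}\bigoplus_{k\in\Gamma^\mathrm{z}}A^\mathrm{z}_k$ onto the summand $A^\mathrm{x}_g$. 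Summing these projections over all $\mathrm{x}\in\C{G}_0$ and all $g\in\Gamma^\mathrm{x}$ then gives $id_A$, which is condition (1).

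For condition (2), fix $\mathrm{x}\in\C{G}_0$, $g,h\in\Gamma^\mathrm{x}$, and $c\in\rho(\p{ghg^{-1}h^{-1}}{e_\mathrm{x}})A=A^\mathrm{x}_{ghg^{-1}h^{-1}}$. From Proposition \ref{InducedDkGResult} I would record that $\rho(\p{h}{e_\mathrm{x}})$ is the projection onto $A^\mathrm{x}_h$; that $\rho(\p{hgh^{-1}}{h})$ vanishes off $A^\mathrm{x}_g$ and on $A^\mathrm{x}_g$ equals $\varphi(h)|_{A^\mathrm{x}_g}\colon A^\mathrm{x}_g\to A^\mathrm{x}_{hgh^{-1}}$; and that $\rho(\p{h}{g^{-1}})$ vanishes off $A^\mathrm{x}_{ghg^{-1}}$ and on $A^\mathrm{x}_{ghg^{-1}}$ equals $\varphi(g^{-1})|_{A^\mathrm{x}_{ghg^{-1}}}\colon A^\mathrm{x}_{ghg^{-1}}\to A^\mathrm{x}_h$. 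Since $c\in A^\mathrm{x}_{ghg^{-1}h^{-1}}$, part (i) of Proposition \ref{ProductResult1} gives that $l_c$ maps $A^\mathrm{x}_q$ into $A^\mathrm{x}_{ghg^{-1}h^{-1}q}$. Chasing these gradings, and using the group identities $ghg^{-1}h^{-1}\cdot hgh^{-1}=g$ and $ghg^{-1}h^{-1}\cdot h=ghg^{-1}$, I would conclude that $l_c\circ\rho(\p{hgh^{-1}}{h})$ is supported on $A^\mathrm{x}_g$, where it restricts to $l_c\circ\varphi(h)|_{A^\mathrm{x}_g}$, while $\rho(\p{h}{g^{-1}})\circ l_c\circ\rho(\p{h}{e_\mathrm{x}})$ is supported on $A^\mathrm{x}_h$, where it restricts to $\varphi(g^{-1})\circ l_c|_{A^\mathrm{x}_h}$. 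Taking traces, condition (2) becomes $\mbox{Tr}(l_c\circ\varphi(h)|_{A^\mathrm{x}_g}\colon A^\mathrm{x}_g\to A^\mathrm{x}_g)=\mbox{Tr}(\varphi(g^{-1})\circ l_c|_{A^\mathrm{x}_h}\colon A^\mathrm{x}_h\to A^\mathrm{x}_h)$, which is precisely axiom (ix) of Definition \ref{DefGrpdFA}.

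The argument is pure bookkeeping, so there is no real obstacle; the only point that needs care is confirming that each composite operator has exactly the claimed support and restriction — i.e.\ correctly matching the $\Gamma^\mathrm{x}$-grading shifts introduced by the various $\rho(\p{*}{*})$ with the shift introduced by $l_c$ — so that each trace is computed over a single graded summand and the passage to axioms (viii) and (ix) is an identity rather than merely an inclusion.
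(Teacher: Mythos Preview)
Your proposal is correct and follows essentially the same route as the paper: both verify condition (1) by using the definition of $\rho$ to reduce $\rho(\p{g}{g})$ to $\varphi(g)$ on $A^\mathrm{x}_g$ and then invoke axiom (viii), and both verify condition (2) by identifying the supports and restrictions of the composite operators so that the two traces become exactly the two sides of axiom (ix). The only difference is that you spell out the $\Gamma^\mathrm{x}$-grading bookkeeping (the shifts $ghg^{-1}h^{-1}\cdot hgh^{-1}=g$ and $ghg^{-1}h^{-1}\cdot h=ghg^{-1}$) more explicitly than the paper does; one small correction is that the grading behaviour of $l_c$ here should be attributed to axiom (i) of Definition \ref{DefGrpdFA} (or to Proposition \ref{InducedAlgebraObject}) rather than to Proposition \ref{ProductResult1}, since in this direction the product $m$ is built from the $\C{G}$-FA product.
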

 \begin{proof}
 For condition (1), let $a=\sum_{\mathrm{x}\in \C{G}_0}\sum_{g\in \Gamma^\mathrm{x}}a^\mathrm{x}_g$.  Then
\begin{align}
\nonumber
\sum_{\mathrm{x}\in\C{G}_0}\sum_{g\in \Gamma^\mathrm{x}}\rho(\p{g}{g})a&=\sum_{\mathrm{x}\in\C{G}_0}\sum_{g\in \Gamma^\mathrm{x}}\varphi(g)a^\mathrm{x}_g\\
\nonumber
&=\sum_{\mathrm{x}\in\C{G}_0}\sum_{g\in \Gamma^\mathrm{x}}a^\mathrm{x}_g\\
\nonumber
&=a
\end{align}
where the first equality follows from the definition of $\rho$ and the second equality follows from axiom (viii) of Definition \ref{DefGrpdFA}.  

For condition (2), note that 
\begin{equation}
\mbox{Tr}\left(l_c\circ \rho(\p{hgh^{-1}}{h})\right)=\mbox{Tr}\left(l_c\circ \varphi(h)|_{A^\mathrm{x}_g}:A^\mathrm{x}_g\rightarrow A^\mathrm{x}_g\right)
\end{equation}
and 
\begin{equation}
\mbox{Tr}\left(\rho(\p{h}{g^{-1}})\circ l_c\circ \rho(\p{h}{e_\mathrm{x}})\right)=\mbox{Tr}\left(\varphi(g^{-1})\circ l_c|_{A^\mathrm{x}_h}:A^\mathrm{x}_h\rightarrow A^\mathrm{x}_h\right).
\end{equation}
Condition (2) then follows from axiom (ix) of Definition \ref{DefGrpdFA}.
 \end{proof}
 \subsection{Proof of Theorem \ref{MainTheorem}}
 The proof of Theorem \ref{MainTheorem} now follows from Propositions \ref{GFO2GFA} and \ref{FrobeniusRelation0}.  Specifically, Proposition  \ref{GFO2GFA} shows that every Frobenius object in $\mbox{Rep}(D(k[\C{G}]))$ satisfying the two conditions of Theorem  \ref{MainTheorem} induces a $\C{G}$-FA.   This proves the second half of Theorem \ref{MainTheorem}.  In addition, \textbf{every} $\C{G}$-FA is derived from a Frobenius object in $\mbox{Rep}(D(k[\C{G}]))$ which satisfies conditions (1) and (2) of Theorem \ref{MainTheorem}.  To see this, let $\C{A}$ be any $\C{G}$-FA and use Proposition \ref{FrobeniusRelation0} to represent $\C{A}$ as a Frobenius object in $\mbox{Rep}(D(k[\C{G}]))$.  By Proposition \ref{LastResult}, this Frobenius object satisfies conditions (1) and (2) of Theorem \ref{MainTheorem}.  Its easy to check that if Proposition \ref{GFO2GFA} is applied to the aforementioned Frobenius object, the resulting $\C{G}$-FA is exactly $\C{A}$ and this proves the first part of Theorem \ref{MainTheorem}.

 \section{Conclusions $\&$ Directions for Future Work}
 In this paper, we have shown that $\C{G}$-FAs correspond to a certain type of Frobenius object in the representation category of $D(k[\C{G}])$.  This result generalizes an earlier result for group Frobenius algebras \cite{KP}, and, in the process, provides a category-theoretic ``derivation" of the original $\C{G}$-FA axioms introduced in \cite{P}.  Furthermore, when one compares the original $\C{G}$-FA definition (which is quite lengthy) with the category-theoretic statement of Theorem \ref{MainTheorem} (which is quite concise), one can certainly make the case that the natural setting for $\C{G}$-FAs is \textit{categorical} in nature. 
 
 We conclude the paper with the following open questions\footnote{The author wishes to thank the reviewer for his helpful comments and for raising the questions posed here.}:
 \begin{itemize}
\item[1.] Is there a relationship between $\C{G}$-FAs and HQFT (beyond the special case when $\C{G}$ is a finite group)?
\item[2.] Does the notion of a $\C{G}$-FA make sense if $\C{G}$ is replaced by a \textit{category fibered in groupoids} (e.g., Deligne-Mumford stacks)?   
 \end{itemize}
These questions will be explored in a future work.


\begin{thebibliography}{0}
\bibitem{At} M. Atiyah, Topological quantum field theory,  \textit{Publications Mathematiques de l'IHES} (1988), 175-186.

\bibitem{BS} G. Bohm, K. Szlachanyi, Weak Hopf algebras I. Integral theory and $C^\ast$-structure, \textit{J. of Algebra}, \textbf{221} (1999), 385-438.

\bibitem{BS1} G. Bohm, K. Szlachanyi, A coassociative $C^\ast$-quantum group with nonintegral dimensions, \textit{Letters in Math. Phys.}, \textbf{35}, (1996), 437-456.

\bibitem{Drin} V.G. Drinfeld, Quantum Groups, \textit{Proc. Intl. Congress Math., Berkeley, Calif., USA}, 1986, pp. 798-820.

\bibitem{FG} B. Fantechi, L. Gottsche, Orbifold cohomology for global quotients, \textit{Duke Math. J.}, \textbf{117} (2003), 197-227.

\bibitem{JKK1} T. Jarvis, R. Kaufmann, T. Kimura, Stringy $K$-theory and the Chern character, \textit{Inventiones Math.}, \textbf{168}, 1 (2007), 23-81.

\bibitem{JKK2}  T. Jarvis, R. Kaufmann and T. Kimura, Pointed Admissible G-Covers and G- equivariant Cohomological Field Theories. \textit{Compositio Math.}, 141 (2005), 926-978.

\bibitem{K1} R. M. Kaufmann, The algebra of discrete torsion, \textit{J. of Algebra}, \textbf{282} (2004), 232-259.

\bibitem{K2} R.M. Kaufmann, Orbifolding Frobenius algebras, \textit{Int. J. of Math.}, 14 (2003), 573Ð619.

\bibitem{KP} R. M. Kaufmann, D. Pham, The Drinfeld double and twisting in stringy orbifold theory, \textit{Int. J. of Math.} \textbf{20} (2009), 623-657.

\bibitem{NTV} D. Nikshych, V. Turaev, and L. Vainerman, Quantum groupoids and invariants of knots and $3$-manifolds, math.QA/0006078 (2000).

\bibitem{NV} D. Nikshych, L. Vainerman, Finite quantum groupoids and their applications, math.QA/0006057v2.

\bibitem{N} F. Nill, Axioms of Weak Bialgebras, \textit{math.QA/9805104} (1998).

\bibitem{P} D. Pham, Groupoid Frobenius Algebras, \textit{Comm. in Contemp. Math.}, Vol 12, No. 6, 2010 pp. 939-952.

\bibitem{Tu} V. Turaev, Homotopy Quantum field theory in dimension 2 and group algebras, math.QA/9910010.
\end{thebibliography}
\end{document}